\crefname{hypothesis}{Hypothesis}{Hypotheses}
\title{Uniqueness in inverse diffraction grating problems with infinitely many plane
waves at a fixed frequency \thanks{Submitted to the editors DATE.
\funding{The work of G. Hu is supported by the National Natural Science Foundation of China (No. 12071236),
the Fundamental Research Funds for Central Universities in China (No. 63213025)
and Beijing Natural Science Foundation (No. Z210001).
The work of X. Xu is supported by the China Postdoctoral Science Foundation Grant with No. 2019TQ0023 and the National Natural Science Foundation of China (No. 12122114).
The work of H. Zhang is supported by the National Natural Science Foundation of China (No. 11871466)
and Youth Innovation Promotion Association CAS.}}}
\author{Xiaoxu Xu\thanks{School of Mathematics and Statistics, Xi'an Jiaotong University, Xi'an, Shaanxi, 710049,
China
  (\email{xuxiaoxu@xjtu.edu.cn}).}
\and Guanghui Hu\thanks{(Corresponding author) School of Mathematical Sciences and LEMP, Nankai University, Tianjin 300071,
China
  (\email{ghhu@nankai.edu.cn}).}
\and
Bo Zhang\thanks{LSEC and Academy of Mathematics and Systems Science, Chinese Academy of
Sciences, Beijing 100190, China and School of Mathematical Sciences, University of Chinese
Academy of Sciences, Beijing 100049, China
  (\email{b.zhang@amt.ac.cn}).}
\and
Haiwen Zhang\thanks{Academy of Mathematics and Systems Science, Chinese Academy of Sciences,
Beijing 100190, China
  (\email{ zhanghaiwen@amss.ac.cn}).}
}
\newcommand{\R}{{\mathbb R}}
\newcommand{\Z}{{\mathbb Z}}
\newcommand{\N}{{\mathbb N}}
\newcommand{\C}{{\mathbb C}}
\newcommand{\D}{{\rm d}}
\newcommand{\no}{\nonumber}
\newcommand{\be}{\begin{eqnarray}}
\newcommand{\ben}{\begin{eqnarray*}}
\newcommand{\en}{\end{eqnarray}}
\newcommand{\enn}{\end{eqnarray*}}
\newcommand{\ba}{\backslash}
\newcommand{\pa}{\partial}
\newcommand{\ov}{\overline}
\newcommand{\G}{\Gamma}
\newcommand{\al}{\alpha}
\newcommand{\Om}{\Omega}
\newtheorem{thm}{Theorem}[section]
\newtheorem{cor}{Corollary}[section]
\newtheorem{lem}{Lemma}[section]
\newtheorem{prop}{Proposition}[section]
\newtheorem{defn}{Definition}[section]
\newtheorem{rem}{Remark}[section]
\definecolor{red}{rgb}{0,0,0}
\definecolor{rot}{rgb}{0,0,0}
\definecolor{hw}{rgb}{0,0,0}
\definecolor{blue}{rgb}{0,0,0}
\begin{document}

\maketitle

\begin{abstract}
This paper is concerned with the inverse diffraction problems by a periodic
curve with Dirichlet boundary condition in two dimensions.
It is proved that the periodic curve can be uniquely determined by the near-field measurement data
corresponding to infinitely many incident plane waves with distinct directions at a fixed frequency.
Our proof is based on Schiffer's idea which consists of two ingredients:
i) the total fields for incident plane waves with distinct directions are linearly independent, and
ii) there exist only finitely many linearly independent Dirichlet eigenfunctions in a bounded domain
or in a closed waveguide under additional assumptions on the waveguide boundary.
Based on the Rayleigh expansion, we prove that the phased near-field data can be uniquely determined
by the phaseless near-field data in a bounded domain, with the exception of a finite set of incident angles.
Such a phase retrieval result leads to a new uniqueness result for
the inverse grating diffraction problem with phaseless near-field data at a fixed frequency.
Since the incident direction determines the quasi-periodicity of the boundary value problem, our inverse issues are different from the existing results of [Htttlich \& Kirsch, Inverse Problems 13 (1997): 351-361] where fixed-direction plane waves at multiple frequencies were considered.
\end{abstract}

\begin{keywords}
uniqueness, grating diffraction problem,
Dirichlet boundary condition, phaseless data
\end{keywords}

\begin{AMS}
 35R30, 78A46, 35B27.
\end{AMS}

\section{Introduction}\label{sec:periodic}

Suppose a perfectly conducting grating is illuminated by an incident monochromatic plane wave in
an isotropic homogeneous background medium.
For simplicity it is assumed that the grating is periodic in one surface direction $x_1$ and independent
of another surface direction $x_3$.
In the present paper, we restrict the discussions to the TE polarization case, where
the three-dimensional scattering problem governed by the Maxwell equations can be reduced to a
two-dimensional diffraction problem modeled by the scalar Helmholtz equation over the $x_1x_2$-plane.
Accordingly, the perfect conductor boundary condition on the grating surface can be reduced to the
Dirichlet boundary condition.
This work is concerned with the inverse diffraction problem of recovering the periodic curve (i.e.,
the cross-section of the grating surface) with a Dirichlet boundary condition from phased and phaseless
near-field data measured above the grating.

Denote by $\G\!\subset\!\R^2$ a curve periodic in the $x_1$-direction and bounded in the $x_2$-direction
which represents the cross-section of the grating surface in the $x_1x_2$-plane.
Let the incident field be a time-harmonic plane wave of the form $u^i(x)e^{-i\omega t}$, incited at
the angular frequency $\omega>0$, where the spatially dependent function $u^i$ takes the form
\be\label{plane-wave}
u^i(x)=e^{ikx\cdot d}=e^{ikx_1\sin\theta-ikx_2\cos\theta},\quad x=(x_1,x_2)\in\R^2.
\en
Here the incident direction $d\!:=\!(\sin\theta,\!-\!\cos\theta)$ is given in terms of the incident angle $\theta\!\in\!(\!-\pi/2,\!\pi/2)$ and $k\!:=\!\omega/c$ is the wave number with $c>0$ denoting the wave
speed in the homogeneous background medium.
In this paper we assume further that $\G$ satisfies one of the following regularity conditions:

\textbf{Condition (i)} $\;$ $\Gamma$ is the graph of a 3-times continuously differentiable function;

\textbf{Condition (ii)} $\,\!$ $\Gamma$ is an analytical curve.

\noindent
Denote by $L>0$ the period of $\Gamma$ and by $\Omega$ the unbounded connected domain above $\G$
(cf. Figure \ref{pr}).
The wave propagation is then modelled by the Dirichlet boundary value problem for the Helmholtz equation
\be\label{equation}
\Delta u+k^2 u=0\quad\mbox{in}\;\;\Omega,\qquad u=0\quad\mbox{on}\;\;\G,
\en
where the total field $u=u^i+u^s$ is the sum of the incident field $u^i$ and the scattered field $u^s$.

\begin{figure}[htbp]
  \centering
  \includegraphics[width=3in]{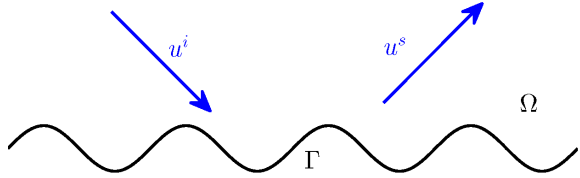}
  \caption{Scattering by a periodic curve with Dirichlet boundary condition.
  }\label{pr}
\end{figure}

Set $\alpha=\alpha(k,\theta):=k\sin\theta$.
Obviously, the incident field (\ref{plane-wave}) is $\alpha$-quasi-periodic in the sense
that $e^{-i\al x_1}u^i(x)$ is $L$-periodic with respect to $x_1$ for all $x\!\in\!\Om$.
In view of the periodicity of the structure together with the form of the incident field,
we require the total field $u$ to be $\al$-quasi-periodic, that is, $e^{-i\al x_1}u(x)$ is $L$-periodic
with respect to $x_1$ for all $x\in\Om$.  This implies that
\be\label{qp}
u(x_1+nL, x_2)=u(x_1, x_2)e^{i\alpha n L}\quad \mbox{for any}\quad n\in \Z.
\en
The number $\alpha\in\R$ will be referred to as the phase shift of the solution.
Since the domain $\Omega$ is unbounded in the $x_2$-direction, a radiation condition needs to be
imposed at infinity as $x_2\rightarrow\infty$ to ensure the well-posedness of the diffraction problem.
Precisely,
we require the scattered field $u^s$ to satisfy the Rayleigh expansion, that is, there exist
Rayleigh coefficients $A_n\in\C$ ($n\in\mathbb{Z}$) depending on $k$, $\theta$ and $\Gamma$ such that
\be\label{Rayleigh-up}
u^s(x)=\sum_{n\in\Z}A_ne^{i\alpha_n x_1+i\beta_n x_2},\quad x\in U_h:=\{x=(x_1,x_2)\in\R^2:x_2>h\}
\en
where the parameters $\alpha_n\in\R$ and $\beta_n\in\C$ for $n\in\Z$ are defined by
\be\label{beta}\begin{split}
&\alpha_n=\alpha_n(k,\theta,L):=\alpha+2n\pi/L,\\
&\beta_n=\beta_n(k,\theta,L):=
\left\{\begin{array}{lll}
\sqrt{k^2-(\al_n)^2}&&\mbox{if}\quad |\al_n|\le k,\\
i\sqrt{(\al_n)^2-k^2}&&\mbox{if}\quad |\al_n|>k,
\end{array}\right.
\end{split}
\en
for any fixed $h>\max\{x_2:x\in\G\}$.
We note that the series (\ref{Rayleigh-up}) is uniformly convergent and bounded in $U_h$
(see Lemma \ref{lem-x} below).
It consists of a finite number of propagating wave modes for $|\alpha_n|\!\leq\!k$ and infinitely
many surface (evanescent) wave modes corresponding to $|\alpha_n|\!>\!k$.
For notational convenience we rewrite the incident plane wave (\ref{plane-wave}) as
\be\label{ui}
u^i(x)=A_{\imath}e^{i\alpha_{\imath}x_1+i\beta_{\imath}x_2},
\en
where $A_{\imath}=A_{\imath}(k,\theta):=1$, $\alpha_{\imath}=\alpha_{\imath}(k,\theta):=k\sin\theta$, $\beta_{\imath}=\beta_{\imath}(k,\theta):=-k\cos\theta$.
Here, the symbol $\imath$ denotes the index for the incident plane wave.
We note that $\alpha_{\imath}=\alpha=\alpha_0$ and $\beta_{\imath}=-\beta_0$.

The well-posedness of the forward diffraction problem is presented in the following proposition.

\begin{prop}\label{Lem:planewave}
(1) If Condition (i) holds, the diffraction problem (\ref{plane-wave})--(\ref{Rayleigh-up}) admits
a unique $\alpha$-quasi-periodic solution $u\in C^2(\Om)\cap C(\overline{\Omega})$.

(2) Under Condition (ii), there exists at least one solution $u\in C^2(\Om)\cap C(\overline{\Omega})$
to the diffraction problem (\ref{plane-wave})--(\ref{Rayleigh-up}).
Moreover, uniqueness of the solution remains true for small wave numbers or for all wave numbers
excluding a discrete set with the only accumulating point at infinity.
\end{prop}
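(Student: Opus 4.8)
The plan is to reduce, in both cases, the diffraction problem posed on the unbounded strip $\Om$ to a variational problem on one period of a bounded cell, and then to invoke Fredholm theory; the two regularity hypotheses on $\G$ enter only through the way uniqueness is established. Concretely, fix $h>\max\{x_2:x\in\G\}$, set $\G_h:=\{(x_1,h):0<x_1<L\}$ and let $Q_h$ be one period of $\{x\in\Om:x_2<h\}$. On the half-plane $U_h$ the scattered field is given by the uniformly convergent Rayleigh series (\ref{Rayleigh-up}) (cf. Lemma~\ref{lem-x}), which lets one introduce the Dirichlet-to-Neumann operator $\Lambda$ sending an $\al$-quasi-periodic trace $\sum_{n}g_n e^{i\al_n x_1}$ on $\G_h$ to $\sum_{n}i\beta_n g_n e^{i\al_n x_1}$. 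Writing $u=u^i+u^s$, the problem (\ref{equation})--(\ref{Rayleigh-up}) is then equivalent to finding $u$ in $H:=\{v\in H^1(Q_h):v=0\text{ on }\G,\ v\ \al\text{-quasi-periodic}\}$ with
\[
a_k(u,v):=\int_{Q_h}(\nabla u\cdot\overline{\nabla v}-k^2 u\,\overline v)\,dx-\int_{\G_h}(\Lambda u)\,\overline v\,ds=\int_{\G_h} g\,\overline v\,ds=:F(v)\quad\text{for all }v\in H,
\]
where $g:=\pa_{x_2}u^i-\Lambda u^i$ on $\G_h$ and the lateral boundary terms cancel by $\al$-quasi-periodicity. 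Splitting $\Lambda$ into the finite-rank part carried by $\{n:|\al_n|\le k\}$ and the remainder and using $-\real\int_{\G_h}(\Lambda v)\overline v\,ds=\sum_{|\al_n|>k}\sqrt{\al_n^2-k^2}\,|v_n|^2\ge 0$, one obtains a G\aa{}rding inequality $\real a_k(v,v)\ge\|v\|_{H^1(Q_h)}^2-C(k)\|v\|_{L^2(Q_h)}^2$; since $H\hookrightarrow L^2(Q_h)$ compactly, the operator $A_k:H\to H$ induced by $a_k$ is Fredholm of index zero. Hence (\ref{equation})--(\ref{Rayleigh-up}) is uniquely solvable whenever the homogeneous problem has only the trivial solution, and in general it is solvable precisely when $F$ annihilates $\ker A_k^{\ast}$.

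\emph{Case (i).} Take $u^i\equiv0$, so $u=u^s$ solves the homogeneous problem. Green's identity on $Q_h$, together with the $\al$-quasi-periodicity and $u=0$ on $\G$, gives $\int_{Q_h}(|\nabla u|^2-k^2|u|^2)\,dx=\int_{\G_h}(\pa_{x_2}u)\,\overline u\,ds$, whose imaginary part yields $\sum_{|\al_n|\le k}\beta_n|A_n|^2=0$, so $A_n=0$ for every propagating mode with $|\al_n|<k$. Consequently $u^s$ decays exponentially as $x_2\to+\infty$ and $u\in H^1(\Om)$. Applying the Rellich identity over one period of $\Om$ with the multiplier $\pa_{x_2}\overline u$: the contribution at $x_2\to+\infty$ vanishes by the decay, the lateral contributions cancel by quasi-periodicity, and on $\G$, using $u=0$ so that $\nabla u=(\pa_\nu u)\nu$, the boundary integrand collapses to $(\nu\cdot e_2)\,|\pa_\nu u|^2$. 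Since $\G$ is a graph, $\nu\cdot e_2$ keeps a fixed sign along $\G$, forcing $\pa_\nu u=0$ there; together with $u=0$ on $\G$, unique continuation from the vanishing Cauchy data (the Helmholtz operator has analytic coefficients) gives $u\equiv0$ in the connected domain $\Om$. By the Fredholm alternative above the solution therefore exists and is unique, and the $C^3$ regularity of $\G$ yields $u\in C^2(\Om)\cap C(\overline\Om)$.

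\emph{Case (ii).} For small $k$ the form $a_k$ is in fact coercive: since $v=0$ on $\G$, Friedrichs' inequality gives $\|v\|_{L^2(Q_h)}\le C_0\|\nabla v\|_{L^2(Q_h)}$ with $C_0$ depending only on $Q_h$, so $\real a_k(v,v)\ge(1-k^2C_0^2)\|\nabla v\|_{L^2(Q_h)}^2$ and Lax--Milgram applies for $0<k<1/C_0$, giving unique solvability there. For general $k$, on each interval between consecutive Rayleigh frequencies $k\mapsto A_k$ extends to an analytic family of operators that are compact perturbations of invertible ones, invertible near $k=0$; the analytic Fredholm theorem then shows $A_k$ is invertible outside a discrete exceptional set $S\subset(0,\infty)$, and since $S$ omits a neighbourhood of the origin its only accumulation point is $\infty$ --- this is the asserted uniqueness statement. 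Existence for \emph{every} $k>0$ (including $k\in S$) follows from the Fredholm alternative once we check $F(w)=0$ for all $w\in\ker A_k^{\ast}$: by the energy-flux identity applied to the adjoint problem, the $n$-th Fourier coefficient of $w|_{\G_h}$ vanishes for all $|\al_n|<k$; and since $u^i|_{\G_h}$, hence $g$, is a pure $n=0$ quasi-periodic mode, $F(w)$ depends only on that coefficient, which vanishes because $|\al_0|=|\al|<k$ (recall $\theta\in(-\pi/2,\pi/2)$). Interior analyticity and regularity up to the analytic boundary $\G$ again give $u\in C^2(\Om)\cap C(\overline\Om)$.

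The step I expect to be the main obstacle is the handling of the Rayleigh (Wood) frequencies $|\al_n|=k$, where $\beta_n=0$: there the DtN operator loses the corresponding Fourier mode, the energy-flux argument of Case~(i) gives no control on that mode, and the analytic Fredholm bookkeeping of Case~(ii) does not obviously extend across such $k$ (so a priori $S$ could accumulate at a Rayleigh frequency, which the claim forbids). Recovering the result at these frequencies --- uniqueness for a graph, and absence of accumulation of $S$ --- requires either a sharpened radiation condition (the upward propagating radiation condition) replacing the bare Rayleigh expansion, or an additional Rellich-type identity with a weighted multiplier such as $x_2\,\pa_{x_2}\overline u$ combined with a limiting-absorption argument near the Wood anomalies; making all this rigorous is the delicate part of the proof.
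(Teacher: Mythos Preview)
Your approach---variational formulation on one periodic cell with the quasi-periodic DtN/transparent boundary condition, G\aa{}rding inequality, Fredholm alternative, Rellich identity with the graph condition for (1), and coercivity at small $k$ plus analytic Fredholm theory for (2)---is exactly what the paper invokes; the paper does not give a self-contained proof but refers to \cite{J.M2002,Kirsch1} for part (1) and to \cite{AbNe,Tilo-hab,BBS94,B1} together with the Fredholm alternative and analytic Fredholm theorem for part (2). Your identification of the Wood anomalies $\beta_n=0$ as the delicate point is correct and is precisely where those references do the nontrivial work (for (1) the Rellich identity is applied on the truncated cell $Q_h$ with the DtN contribution on $\G_h$, avoiding the decay issue; for (2) the cited papers show the exceptional set cannot accumulate at a Rayleigh frequency).
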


We refer to \cite{J.M2002,Kirsch1} for the proof of the first statement when the period of the curve
is $L=2\pi$. Actually, it follows from a scaling argument that the statement (1) holds for an arbitrary
period $L>0$. Further, by the Fredholm alternative (see, e.g., \cite[Theorem 2.33]{Monk}) and the
analytic Fredholm theory (see, e.g., \cite[Theorem 8.26]{CK19}), one can prove the second statement
through a standard variational argument together with quasi-periodic transparent boundary conditions
(see, e.g., \cite{AbNe,Tilo-hab,BBS94,B1}).
We remark that the well-posedness of the diffraction problem (\ref{plane-wave})--(\ref{Rayleigh-up})
can be established under weaker conditions than Conditions (i) and (ii).
To be more specific, if $\Gamma$ is a Lipschitz curve, the existence of $\al$-quasi-periodic variational
solutions in $H_{0,\alpha}^1(\Om)$ can be shown, where
\ben
H^1_{0,\alpha}(\Om):=\{u\in H^1_{loc}(\Om):\mbox{$e^{-i\alpha x_1}u(x)$ is $L$-periodic with respect to $x_1$},\;u=0\;\mbox{on}\;\G\}.
\enn
Further, uniqueness of solutions remains valid for any $k>0$ even under the following weaker assumption
(see \cite[(4.1) and Theorem 4.1]{Monk05} and \cite[(2.2) and Theorem 4.1]{CJ10}):
\ben
(x_1,x_2)\in\Omega\Rightarrow(x_1,x_2+s)\in\Omega\quad\mbox{for all}\;s>0.
\enn
Note that this geometric assumption is fulfilled if $\Gamma$ is the graph of a continuous function.

The inverse problem we consider in this paper is to recover a periodic curve with Dirichlet boundary
condition from phased or phaseless near-field data corresponding to an infinite number of incident plane
waves with different angles, where the period $L$ of the curve is unknown.

Let $\theta_n\in(-\pi/2,\pi/2)$ with $n\in\Z_+$ be distinct incident angles, and denote
by $u(x;\theta_n)$ the total field corresponding to the diffraction problem
(\ref{plane-wave})--(\ref{Rayleigh-up}) with $\theta=\theta_n$.
Note that, according to Proposition \ref{Lem:planewave}, the diffraction problem
(\ref{plane-wave})--(\ref{Rayleigh-up}) may admit multiple solutions under Condition (ii) if $k$
is an exceptional wavenumber. If this happens, $u(x;\theta_n)$ is assumed to any one of
these solutions. The main uniqueness result for the inverse problem considered is presented in the
following theorem.

\begin{theorem}\label{TH-main}
Assume that the unknown periodic curve $\Gamma$ with Dirichlet boundary condition satisfies
either Condition (i) or Condition (ii). Suppose the period of $\Gamma$ is unknown.
Then $\G$ can be uniquely determined by either the phased data
$\{u(x;\theta_n):x\in\mathcal{S}\}_{n=1}^\infty$,
where $\mathcal{S}\subset\G_h$ is a line segment parallel to the $x_1$-axis, or by the phaseless data $\{|u(x;\theta_n)|:x\in\mathcal{D}\}_{n=1}^\infty$, where $\mathcal{D}\subset\Omega$ is a bounded domain.
Here, $\G_h:=\{x:x_2=h\}$ with $h>\max\{x_2:x\in\G\}$ being an arbitrary constant.
\end{theorem}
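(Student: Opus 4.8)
The plan is to run Schiffer's argument, using the two ingredients recalled in the Introduction: the linear independence of the total fields for distinct incident directions, and the finiteness of the space of (quasi-periodic) Dirichlet eigenfunctions at a fixed eigenvalue. Suppose two periodic curves $\G_1,\G_2$, each satisfying Condition (i) or (ii) and carrying the Dirichlet condition, with a priori unknown (possibly distinct) periods $L_1,L_2$ and domains $\Om_1,\Om_2$ above them, generate the same near-field data; the goal is to show $\G_1=\G_2$.

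\emph{Phased data.} By elliptic regularity each $u_j(\cdot;\theta_n)$ is real-analytic in $\Om_j$, so its restriction to the line $\G_h$ is real-analytic in $x_1$ and is therefore determined on all of $\G_h$ by its values on the segment $\mathcal{S}$; through the Rayleigh expansion (\ref{Rayleigh-up}) and Lemma~\ref{lem-x} this in turn fixes every Rayleigh coefficient of $u^s_j(\cdot;\theta_n)$, hence $u^s_j(\cdot;\theta_n)$ on $U_h$, and it exhibits $L_j$ as the quasi-period of $e^{-i\al_n x_1}u_j(\cdot;\theta_n)$. If $L_1/L_2$ is irrational, the identity $u^s_1(\cdot;\theta_n)=u^s_2(\cdot;\theta_n)$ on $\G_h$ equates two almost-periodic series whose frequency sets $\al_n+(2\pi/L_1)\Z$ and $\al_n+(2\pi/L_2)\Z$ intersect only in $\al_n$; by uniqueness of almost-periodic Fourier coefficients each $u^s_j(\cdot;\theta_n)$ is then a single plane wave, so the Dirichlet condition makes each $\G_j$ flat, and they coincide. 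Hence we may assume $\G_1,\G_2$ share a common period $L$. Matching $x_1$-Fourier coefficients on $\G_h$ then gives $A_m^{(1)}=A_m^{(2)}$ for all $m$, so $u^s_1(\cdot;\theta_n)=u^s_2(\cdot;\theta_n)$ on $U_h$, and unique continuation for the Helmholtz equation propagates this to $u_1(\cdot;\theta_n)=u_2(\cdot;\theta_n)$ on $\Om_1\cap\Om_2$ (connected --- in the graph case the epigraph of $\max(f_1,f_2)$). Now assume $\G_1\neq\G_2$; after relabelling, $D:=\Om_1\setminus\overline{\Om_2}$ is nonempty, $L$-periodic in $x_1$, bounded in $x_2$, with $\partial D\subset\G_1\cup\G_2$. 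On $\partial D\cap\G_1$ the field $u_1(\cdot;\theta_n)$ vanishes by the boundary condition; at a point of $\partial D\cap\G_2$ lying in $\Om_1$ it equals $u_2(\cdot;\theta_n)$ by continuity from $\Om_1\cap\Om_2$, hence also vanishes. Thus every $u_1(\cdot;\theta_n)$ restricts to a Dirichlet eigenfunction of $-\Delta$ on $D$ at eigenvalue $k^2$, and it is not identically zero, for otherwise $u^s_1(\cdot;\theta_n)=-u^i(\cdot;\theta_n)$, contradicting the Rayleigh expansion since $\beta_0=k\cos\theta_n\neq0$. Since the fields $\{u_1(\cdot;\theta_n)\}_n$ are linearly independent on $\Om_1$ (first ingredient), so --- by unique continuation --- are their restrictions to $D$. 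If some component of $D$ is bounded, this contradicts the finite-dimensionality of the $k^2$-eigenspace of its Dirichlet Laplacian; if every component of $D$ is an unbounded periodic channel, we use instead the second ingredient (finiteness of linearly independent quasi-periodic Dirichlet eigenfunctions in a closed waveguide), whose hypotheses on the walls of $D$ come from Conditions (i)/(ii). Either way $\G_1=\G_2$.

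\emph{Phaseless data.} I would reduce this to the phased case. As each $u_j(\cdot;\theta_n)$ is real-analytic on the connected set $\Om_j$, so is $|u_j(\cdot;\theta_n)|^2$ (a sum of squares of real-analytic functions), and its values on the bounded domain $\mathcal{D}$ thus determine it on $\Om_j$, in particular on $U_h$. There, expanding $u_j(\cdot;\theta_n)$ in $x_1$ with Fourier coefficients $c_0(x_2)=e^{-i\beta_0 x_2}+A_0 e^{i\beta_0 x_2}$ and $c_m(x_2)=A_m e^{i\beta_m x_2}$ for $m\neq0$, each $x_1$-Fourier coefficient of $|u_j(\cdot;\theta_n)|^2$ is a convergent sum of scattered-scattered contributions $A_p\overline{A_q}\,e^{i(\beta_p-\overline{\beta_q})x_2}$ and incident-scattered contributions $A_q\,e^{i(\beta_q+\beta_0)x_2}$ and $\overline{A_q}\,e^{-i(\beta_0+\overline{\beta_q})x_2}$. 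Provided $\theta_n$ avoids the finite set of angles at which two of the exponential rates in $x_2$ that occur coincide, these contributions can be separated by their $x_2$-behaviour and each $A_m$ read off from its incident-scattered term, which recovers $u_j(\cdot;\theta_n)$ on $\G_h$; applying the phased-data result to the remaining --- still infinite --- set of angles then finishes the proof.

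\emph{Main obstacle.} The hard part is the second ingredient in the channel case --- showing that a closed periodic waveguide carries only finitely many linearly independent quasi-periodic Dirichlet eigenfunctions at a fixed $k^2$ under suitable conditions on its walls --- together with the bookkeeping in the phase-retrieval step that identifies exactly which incident angles must be excluded.
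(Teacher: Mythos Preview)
Your overall route matches the paper's: analytic continuation from the segment to $\Gamma_h$, establishing a common period (the paper's Theorem~\ref{TH-phase-period}), propagation of $u_1=u_2$ to $\Omega_1\cap\Omega_2$, Schiffer's argument on the gap domain split into a bounded case and a periodic-waveguide case, and a phase-retrieval reduction for the phaseless data via the Rayleigh expansion.

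The genuine gap is the waveguide case, and it is sharper than your description. The fields $u_1(\cdot;\theta_n)$ are $\alpha(\theta_n)$-quasi-periodic with $\alpha(\theta_n)=k\sin\theta_n$ all \emph{different}, so they are not eigenfunctions in one common Hilbert space whose $k^2$-eigenspace could be declared finite-dimensional; the paper's remark after Case~(i) stresses exactly this obstruction. What must actually be shown is that, for fixed $k$, the set of $\mu\in(-1,1)$ for which the waveguide admits a nontrivial $k\mu$-quasi-periodic Dirichlet eigenfunction is \emph{discrete} with no real accumulation point (Lemma~\ref{lem-x4}). The paper achieves this by writing the $\mu$-eigenvalue problem as $(I+\mu B+(\mu^2-1)C)w=0$ with $B,C$ compact and invoking the analytic Fredholm theorem, which in turn requires exhibiting \emph{one} $\mu$ with only the trivial solution; that step (Corollary~\ref{lin-indep:cor1}) rests on the absence of flat dispersion curves (Proposition~\ref{lem-x2.3}, imported from Sobolev--Walthoe), and this is precisely where Conditions~(i)/(ii) on the boundary are used. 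The contradiction then comes because the distinct values $\sin\theta_n$ must accumulate in $[-1,1]$. Your phrase ``finiteness of linearly independent quasi-periodic Dirichlet eigenfunctions'' echoes the abstract but hides this mechanism; read as finite-dimensionality for a fixed quasi-periodicity, it does not apply here.

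Two minor points. In the irrational-period case, flatness of each $\Gamma_j$ does not by itself give $\Gamma_1=\Gamma_2$ (two horizontal lines at different heights are possible); the paper only extracts a common period at that stage and lets the subsequent waveguide analysis finish the job. For the phase retrieval, the paper's excluded angles are exactly those with $k\sin\theta\,L_j/\pi\in\Z$ (Lemma~\ref{l2}), which guarantees $\alpha_n\neq-\alpha_\imath$ and powers the combinatorial Lemma~\ref{l-n}; this is not quite ``two exponential rates in $x_2$ coincide,'' and identifying the right exclusion is part of the work.
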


The proof of Theorem \ref{TH-main} will be given in Section \ref{sec4} for the case of phased data and in
Section \ref{sec5} for the case of phaseless data. If the background medium is non-absorbing (i.e., $k>0$),
it is well known that the global uniqueness with phased near-field data corresponding to one incident plane
wave is impossible (see \cite{EH2010}).
We will show in Section \ref{sec3} that phased near-field data corresponding to one
incident plane wave cannot even determine the period of a grating curve.
To the best of our knowledge, uniqueness for one incident wave was verified in the following special cases:
\begin{itemize}
 \item[(i)] the background medium is lossy (i.e., ${\rm Im}\,k>0$) \cite{B94};
 \item[(ii)] the wave number or the grating height is sufficiently small \cite{HF97};
 \item[(iii)] within the class of rectangular gratings \cite{ESY}, or within the class of polygonal gratings
 in the case that Rayleigh frequencies are excluded (i.e., $\beta_n\neq 0$ for all $n\in\Z$) \cite{EH2010}.
\end{itemize}
If a Rayleigh frequency occurs (i.e., $\beta_n=0$ for some $n\in\Z$), the measured data for two incident
plane waves can be used to determine a general polygonal grating \cite{ESY} (see also \cite{B2010,BZZ}
in the case of inverse electromagnetic scattering from perfectly conducting polyhedral gratings).
It was proved in  \cite{K94}
that a general periodic curve can be uniquely determined by using all
$\alpha$-quasi-periodic incident waves $\{e^{i\alpha_n x_1-i\beta_n x_2}:n\in\Z\}$.
Note that such kind of incident waves include a finite number plane waves for $|\alpha_n|\le k$ and
infinitely many evanescent waves corresponding to $|\alpha_n|>k$.
The factorization method established in \cite{Arens2005} also gives rise to the same uniqueness result.
If the a priori information of the grating height is available, Hettlich and Kirsch \cite{HF97} obtained
a uniqueness result by using fixed-direction plane waves with a finite number of frequencies.
This can be viewed as an extension of the idea due to Colton and Sleeman \cite{CS1983} from the case
of inverse scattering by bounded sound-soft obstacles to the case of inverse scattering by periodic structures. As will be seen in subsection \ref{sub-case2},  the fixed-direction problem of \cite{HF97} and the fixed-frequency problem to be investiagted here result in different eigenvalue problems. Using different directions leads to a $\mu$-eigenvalue problem where $\mu=\sin\theta$ is determined by the incidient angle $\theta\in(-\pi/2, \pi/2)$, which brings difficulites in proving the discreteness of  eigenvalues. To apply the analytical Fredholm theory,  we shall resort on the arguments of \cite{SW2002} to exclude the existence of flat dispersion curves in a closed waveguide.

In many practical applications, it is difficult to accurately measure the phase information of wave fields.
This motivates us to study the inverse problem of whether it is possible to recover a periodic curve with
Dirichlet boundary condition from phaseless data.
However, most uniqueness results with phaseless data are confined to inverse scattering from bounded
scatterers (see, e.g., \cite{JLZ19b,K14,K17,K17x,N15,XZZ18a}).
In particular, using the decaying property of the scattered field at infinity, explicit formulas for
recovering phased far-field pattern from phaseless near-field data are derived in \cite{N15}.
In this paper we also prove a phase retrieval result but based on the Rayleigh expansion (\ref{Rayleigh-up})
for diffraction grating problems.
To the best of our knowledge, uniqueness results for identifying periodic grating curves using phaseless
near-field data are not available so far. We refer to \cite{AHN,ACZ,BLL,JLZ19b,K18,XZZ,ZZ17,ZZ18}
for numerical schemes to inverse scattering using phaseless data.

This paper is organized as follows.
In Section \ref{sec2}, we prepare several lemmas for later use.
Section \ref{sec3} is devoted to determining one grating period from the phased near-field data
for one incident plane wave. The results in Sections \ref{sec2} and \ref{sec3} are independent of
the smoothness Conditions (i) and (ii) of the periodic curve made in the introduction part.
In Section \ref{sec4}, we prove uniqueness for recovering periodic curves with Dirichlet boundary condition
using the phased near-field data corresponding to infinitely many incident plane waves with distinct directions.
A similar uniqueness result based on phaseless near-field data will be established in Section \ref{sec5}.
Finally, concluding remarks will be given in Section \ref{sec6}.

\section{Preliminary lemmata}\label{sec2}
\setcounter{equation}{0}

The following lemmas are useful in the proofs of uniqueness results in the sequel.

\begin{lem}\label{lem-x}
Let $\G$ be a periodic curve.
Set $U_h:=\{x\in\R^2\!:\!x_2>h\}$ for any $h>\max\{x_2\!:\!x\in\G\}$.

(i) The Rayleigh expansion (\ref{Rayleigh-up}) is uniformly bounded for $x\in\!U_h$.

(ii) The Rayleigh expansion (\ref{Rayleigh-up}) is uniformly and absolutely convergent for $x\in\!U_h$.

(iii) Let $b\in\mathbb{R}$ and let $A_n$ ($n\in\mathbb{Z}$) be given as in (\ref{Rayleigh-up}).
Set $\mathcal P_{\pm}(N):=\{n\in\Z:|\alpha_n|>k,\pm n>N\}$ for $N>0$.
Then, for the case when $b<|\beta_n|$ for all $n\in\mathcal P_{+}(N)$ the series
\be\label{1119-1}
\sum_{n\in\mathcal P_{+}(N)}A_ne^{i\alpha_n x_1+i\beta_n x_2+bx_2}
\en
is uniformly and absolutely convergent for $x\in\!U_h$.
For the case when $b<|\beta_n|$ for all $n\in\mathcal P_{-}(N)$,
the series
\ben
\sum_{n\in\mathcal P_{-}(N)}A_ne^{i\alpha_n x_1+i\beta_n x_2+bx_2}
\enn
is uniformly and absolutely convergent for $x\!\in\!U_h$.

(iv) Let $N\in\Z_+$, $a_j\in\C$ and $b_j\in\R\ba\{0\}$ for $j=1,\ldots,N$.
Then
\ben
\left|\frac1T\int_T^{2T}\sum_{j=1}^Na_je^{ib_jt}dt\right|\leq \frac{2}{T}
\sum_{j=1}^N\frac{|a_j|}{|b_j|}\rightarrow 0\quad \textrm{as}~T\rightarrow+\infty.
\enn
\end{lem}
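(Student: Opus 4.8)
The plan is to prove (i) and (ii) together by a Weierstrass M-test, the decisive ingredient being a uniform growth bound on the Rayleigh coefficients $A_n$ obtained by restricting $u^s$ to an auxiliary horizontal line lying strictly between $\G$ and the line $x_2=h$.

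First I would fix a number $h_1$ with $\max\{x_2:x\in\G\}<h_1<h$; such an $h_1$ exists precisely because $h>\max\{x_2:x\in\G\}$. The line $\{x_2=h_1\}$ lies in $\Om$, so $u^s$ is smooth (hence continuous) there, and since $|u^s(\cdot,h_1)|$ is $L$-periodic we have $M:=\sup_{x_1\in\R}|u^s(x_1,h_1)|<\infty$. Because the Rayleigh expansion is valid for every $x_2>\max\{x_2:x\in\G\}$, in particular on $x_2=h_1$, the number $A_ne^{i\beta_n h_1}$ is the $n$-th Fourier coefficient of the $L$-periodic function $x_1\mapsto e^{-i\al x_1}u^s(x_1,h_1)$, whence $|A_ne^{i\beta_n h_1}|\le M$ for all $n$. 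For the finitely many propagating modes ($|\al_n|\le k$) this gives $|A_n|\le M$, and for the evanescent modes ($|\al_n|>k$), writing $\g_n:=\sqrt{\al_n^2-k^2}=|\beta_n|=\ima\,\beta_n>0$, it gives $|A_n|\le M e^{\g_n h_1}$.

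Next, for $x\in U_h$ (so $x_2>h>h_1$) each evanescent term satisfies $|A_ne^{i\al_n x_1+i\beta_n x_2}|=|A_n|e^{-\g_n x_2}\le M e^{-\g_n(h-h_1)}$, while each of the finitely many propagating terms has modulus at most $M$. Since $\al_n=\al+2n\pi/L$ grows linearly, $\g_n\sim 2\pi|n|/L$ as $|n|\to\infty$, so $\sum_{|\al_n|>k}e^{-\g_n(h-h_1)}<\infty$; the Rayleigh series is therefore dominated on $U_h$ by a convergent constant series, which yields uniform absolute convergence (part (ii)) and uniform boundedness (part (i)). For (iii) I would reuse this coefficient bound: for $n\in\mathcal P_{+}(N)$ one has $i\beta_n=-\g_n$, hence $|A_ne^{i\al_n x_1+i\beta_n x_2+bx_2}|\le M e^{\g_n h_1+(b-\g_n)x_2}$; the hypothesis $b<|\beta_n|=\g_n$ makes this exponent decreasing in $x_2$, so on $U_h$ it is at most $bh-\g_n(h-h_1)$, and the M-test applies again because $\sum_{n\in\mathcal P_{+}(N)}e^{-\g_n(h-h_1)}<\infty$; the series over $\mathcal P_{-}(N)$ is handled verbatim. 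Part (iv) is elementary: for $b_j\neq 0$ one has $|\int_T^{2T}e^{ib_j t}\,dt|=|(e^{2ib_jT}-e^{ib_jT})/(ib_j)|\le 2/|b_j|$, so the displayed average is bounded by $(2/T)\sum_{j=1}^N|a_j|/|b_j|$, which tends to $0$ as $T\to\infty$ since the sum is finite.

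The one point requiring care is the choice $h_1<h$ rather than $h_1=h$: with $h_1=h$ the coefficient estimate only yields $|A_ne^{i\beta_n x_2}|\le M$ on $U_h$, which is bounded but not summable. It is the strict gap $h-h_1>0$ — available exactly because $h>\max\{x_2:x\in\G\}$ — that produces the exponentially small factor $e^{-\g_n(h-h_1)}$ driving (i)--(iii); everything else is the asymptotics $\g_n\sim 2\pi|n|/L$ combined with the Weierstrass M-test.
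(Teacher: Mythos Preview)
Your proof is correct and follows essentially the same strategy as the paper: introduce an auxiliary height strictly between $\G$ and $x_2=h$, bound the Rayleigh coefficients via the values of $u^s$ on that line, and then exploit the exponential gap $h-h_1>0$ (the paper's $\sigma$) to drive the M-test for (i)--(iii), with (iv) done by the same direct antiderivative computation. The only cosmetic difference is that the paper bounds the coefficients through Parseval's identity and the inequality $2ab\le a^2+b^2$, whereas you use the $L^\infty$ bound on Fourier coefficients; both yield $|A_ne^{i\beta_n h_1}|\le C$ and the rest is identical.
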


\begin{proof}
Choosing $\sigma>0$ small enough so that $h-2\sigma>\max\{x_2:x\in\G\}$, noting
that (\ref{Rayleigh-up}) also holds with $h$ replaced by $h-2\sigma$ and applying Parseval's
equality yield the estimate
\be\no
2|u^s(x)|&\leq&\sum_{n\in\Z}2\left|A_ne^{i\alpha_nx_1+i\beta_nx_2}\right|\\ \no
&\leq&\sum_{n\in\Z}\left|A_ne^{i\alpha_nx_1+i\beta_n(h-\sigma)}\right|^2+\sum_{n\in\Z}
\left|e^{i\beta_n(x_2-h+\sigma)}\right|^2\\ \label{211108-1}
&\leq&\frac1{L}\int_0^{L}|u^s(x_1,h-\sigma)|^2dx_1+\sum_{|\alpha_n|\leq k}1+\sum_{|\alpha_n|>k}Ce^{-|n|/C}
\en
uniformly for all $x\!\in\!U_h$, where we have used the fact that
$\sigma\sqrt{(2n\pi/L+\alpha_0)^2-k^2}>|n|/C$ holds for sufficiently large $|n|$ provided
the constant $C>0$ is large enough.
Thus statement (i) holds. The estimate (\ref{211108-1}) also implies that statement (ii) holds.

We now prove statement (iii).
We only consider the case when $b<|\beta_n|$ for all $n\in\mathcal P_{+}(N)$ since
the proof of the other cases is similar. We first conclude from (\ref{211108-1})
that $\{|A_ne^{i\alpha_nx_1+i\beta_n(h-\sigma)}|:n\in\mathcal P_{+}(N)\}$ is uniformly bounded.
Noting that in this case $(i\beta_n+b)<0$ for all $n\in\mathcal P_{+}(N)$, we have
\ben
\sum_{n\in\mathcal P_{+}(N)}\left|A_ne^{i\alpha_n x_1+i\beta_n x_2+bx_2}\right|
&\leq&\sum_{n\in\mathcal P_{+}(N)}\left|A_ne^{i\alpha_nx_1+i\beta_n(h-\sigma)}\right|
e^{b(h-\sigma)}e^{(i\beta_n+b)(x_2-h+\sigma)}\\
&\leq&\sum_{n\in\mathcal P_{+}(N)}Ce^{-|n|/C}
\enn
uniformly for all $x\in\!U_h$, where we have used the fact that
$\sigma\sqrt{(2n\pi/L+\alpha_0)^2-k^2}-b>|n|/C$ holds for sufficiently large $|n|$ provided
the constant $C>0$ is large enough.
This implies that (\ref{1119-1}) is uniformly and absolutely convergent for $x\in\!U_h$.

Finally, noting that
\ben
\left|\frac1T\int_T^{2T}a_je^{ib_jt}dt\right|=\left|\frac1T\frac{a_j(e^{2ib_jT}-e^{ib_jT})}{ib_j}\right|
\leq\frac1T\frac{2|a_j|}{|b_j|},\quad j=1,\ldots,N,
\enn
it is easy to see that statement (iv) holds.
\end{proof}

\begin{lem}\label{lem-x2}
Let $u(x;\theta_m)$ be the total field corresponding to the diffraction problem
(\ref{plane-wave})--(\ref{Rayleigh-up}) with the incident angle $\theta=\theta_m\in(-\pi/2,\pi/2)$
for $m=1,\ldots,M$ and $M\in\Z_+$.
Suppose $\{\theta_{m}\}_{m=1}^M$ are distinct incident angles.
Then $\{u(x;\theta_m)\}_{m=1}^M$ are linearly independent in $\Omega$.
\end{lem}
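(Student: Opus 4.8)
Here is how I would prove Lemma~\ref{lem-x2}.

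The plan is to work in the half-plane $U_h=\{x=(x_1,x_2):x_2>h\}$, which lies in $\Omega$ once a constant $h>\max\{x_2:x\in\G\}$ is fixed, and to separate the incident plane waves from the scattered modes by means of the Bohr-type averaging in Lemma~\ref{lem-x}(iv). Suppose $\sum_{m=1}^M c_m u(\cdot\,;\theta_m)\equiv0$ in $\Omega$; the goal is to deduce $c_1=\dots=c_M=0$. On $U_h$, writing $u=u^i+u^s$ and using (\ref{plane-wave}), (\ref{Rayleigh-up}) and (\ref{beta}), each total field takes the form
\ben
u(x;\theta_m)=e^{ik\sin\theta_m\,x_1-ik\cos\theta_m\,x_2}+\sum_{n\in\Z}A_n^{(m)}e^{i\alpha_n^{(m)}x_1+i\beta_n^{(m)}x_2},
\enn
with $\alpha_n^{(m)}=k\sin\theta_m+2n\pi/L$ and $\beta_n^{(m)}$ determined from $\alpha_n^{(m)}$ and $k$ through (\ref{beta}), where $L>0$ is the (fixed) period of $\G$. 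The structural observation that drives the proof is that every scattered exponential $e^{i\beta_n^{(m)}x_2}$ is ``non-downgoing'' ($\real\beta_n^{(m)}\ge0$, $\ima\beta_n^{(m)}\ge0$), while the incident exponential $e^{-ik\cos\theta_m x_2}$ is strictly downgoing, and, crucially, $\beta_n^{(m)}$ depends only on the value of $\alpha_n^{(m)}$ (and on $k$), not on the individual index.

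Next I would fix $m_0\in\{1,\dots,M\}$, set $\xi:=k\sin\theta_{m_0}$, multiply the identity $\sum_m c_m u(\cdot\,;\theta_m)=0$ by $e^{-i\xi x_1}$, apply the average $\frac1T\int_T^{2T}(\cdot)\,dx_1$ and let $T\to\infty$. Applying Lemma~\ref{lem-x}(iv) to the finite incident sum and to each finite partial sum of the Rayleigh series, and controlling the Rayleigh tails uniformly in $x_1$ and $T$ by the convergent majorant from Lemma~\ref{lem-x}(ii), every contribution with a nonzero $x_1$-frequency disappears in the limit. Since $\theta\mapsto\sin\theta$ is injective on $(-\pi/2,\pi/2)$, the only surviving incident contribution is $c_{m_0}e^{-ik\cos\theta_{m_0}x_2}$, while the surviving scattered contributions come from the (finitely many) pairs $(m,n)$ with $\alpha_n^{(m)}=\xi$, each carrying the \emph{same} $x_2$-exponent $\beta(\xi)=\sqrt{k^2-\xi^2}=k\cos\theta_{m_0}>0$ (note $|\xi|<k$). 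Hence, for all $x_2>h$,
\ben
c_{m_0}e^{-ik\cos\theta_{m_0}x_2}+\Big(\sum_{(m,n):\,\alpha_n^{(m)}=\xi}c_m A_n^{(m)}\Big)e^{ik\cos\theta_{m_0}x_2}=0.
\enn
Because $k\cos\theta_{m_0}\neq0$, the functions $e^{-ik\cos\theta_{m_0}x_2}$ and $e^{ik\cos\theta_{m_0}x_2}$ are linearly independent on $(h,\infty)$, which forces $c_{m_0}=0$. As $m_0$ was arbitrary, $c_1=\dots=c_M=0$, proving the claimed linear independence.

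The main obstacle is fairly mild: it is the justification of the term-by-term passage to the limit in the Bohr averaging of the infinite Rayleigh sum, which I would handle in the standard way by splitting each series into a finite head (treated by Lemma~\ref{lem-x}(iv)) and a tail that is small uniformly in $x_1$ and $T$ (by Lemma~\ref{lem-x}(ii)). Note that the argument uses neither the Dirichlet condition nor the precise geometry of $\G$ beyond $L$-periodicity, consistent with Schiffer's strategy, where the boundary condition enters only through the companion statement on the finiteness of Dirichlet eigenfunctions.
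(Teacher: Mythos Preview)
Your argument is correct and reaches the conclusion, but it proceeds in the opposite order from the paper's proof. The paper first multiplies by $e^{-i\beta_\imath(\theta_{\tilde m})x_2}$ and averages in $x_2$ (splitting the scattered part into the evanescent tail $\mathcal J_1(N)$, the finite evanescent head $\mathcal J_2(N)$, and the propagating modes $\mathcal J_3$), which isolates the incident contributions with $\beta_\imath(\theta_m)=\beta_\imath(\theta_{\tilde m})$; it then multiplies by $e^{-i\alpha_\imath(\theta_{\tilde m})x_1}$ and averages in $x_1$ to single out $c_{\tilde m}$. You instead average in $x_1$ first, and then exploit the structural observation that all surviving scattered modes with horizontal wavenumber $\xi=k\sin\theta_{m_0}$ necessarily share the same vertical wavenumber $\beta(\xi)=k\cos\theta_{m_0}>0$, so a single linear-independence step for $e^{\pm ik\cos\theta_{m_0}x_2}$ replaces the paper's second averaging. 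Your route is a little shorter and makes the role of the dispersion relation $\beta_n^{(m)}=\beta(\alpha_n^{(m)})$ explicit; the paper's route, on the other hand, never needs to identify which scattered modes share a horizontal wavenumber with the incident one, and its $x_2$-averaging step is reusable elsewhere in the paper (e.g.\ in the proof of Theorem~\ref{TH-phase-period}). Your handling of the infinite Rayleigh sum via a uniform tail bound from Lemma~\ref{lem-x}(ii) plus Lemma~\ref{lem-x}(iv) on finite heads is exactly the device the paper uses as well.
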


\begin{proof}
Assume that $\sum^{M}_{m=1}c_mu(x;\theta_m)=0$ in $\Omega$ for some $c_m\in\C$, $m=1,\ldots,M$.
To indicate the dependence of $u^s$ on the incident angle, we rewrite the Rayleigh expansion
(\ref{Rayleigh-up}) as
\ben
u^s(x;\theta_m)=\sum_{n\in \Z} A_{n}(\theta_m)\;e^{i\alpha_n(\theta_m)x_1+i\beta_n(\theta_m)x_2},\;x\in U_h,
\enn
where $h>\max\{x_2:x\in\G\}$ and $\alpha_n(\theta_m):=\alpha(\theta_m)+2n\pi/L$ with
$\alpha(\theta_m):=k\sin\theta_m$ and $\beta_n(\theta_m)\in\C$ are defined as in (\ref{beta})
with the incident angle $\theta=\theta_m$. Then, by (\ref{ui}) it follows that
\be\label{lin-indep:eq1}
\sum\limits^{M}_{m=1}c_m\,u(x;\theta_m)=\sum^{M}_{m=1}c_m
\left(\sum_{n\in\Z\cup\{\imath\}}A_n(\theta_m)\;
e^{i\alpha_n(\theta_m)x_1+i\beta_n(\theta_m)x_2}\right)=0.
\en
For any $\tilde m\!\in\!\{1,2,\ldots,M\}$, multiplying (\ref{lin-indep:eq1}) by
$e^{-i\beta_{\imath}(\theta_{\tilde m})x_2}$ we obtain
\be\no
&&\sum_{m\in\mathcal I_{\tilde m}}c_me^{i\alpha_{\imath}(\theta_m)x_1}+
\sum_{m\in\{1,\ldots,M\}\ba\mathcal I_{\tilde m}}c_me^{i\alpha_{\imath}(\theta_m)x_1
+i[\beta_{\imath}(\theta_m)-\beta_{\imath}(\theta_{\tilde m})]x_2}\\ \label{1119+1}
&&+\sum^{M}_{m=1}c_m
\left(\sum_{n\in\Z}A_n(\theta_m)\;e^{i\alpha_n(\theta_m)x_1
+i[\beta_n(\theta_m)-\beta_{\imath}(\theta_{\tilde m})]x_2}\right)=0,~x\in U_h,
\en
where $\mathcal I_{\tilde m}:=\{m\in\{1,\ldots,M\}:\beta_{\imath}(\theta_m)=\beta_{\imath}(\theta_{\tilde m})\}$.

Next we claim that
\be\label{1119+2}
\lim_{H\rightarrow{\color{hw}+\infty}}\frac1H\int_{H}^{2H}\sum_{m\in\{1,\ldots,M\}\ba\mathcal I_{\tilde m}}
c_me^{i\alpha_{\imath}(\theta_m)x_1+i[\beta_{\imath}(\theta_m)
-\beta_{\imath}(\theta_{\tilde m})]x_2}dx_2=0,\\ \label{1119+3}
\lim_{H\rightarrow{\color{hw}+\infty}}\frac1H\int_{H}^{2H}\sum^{M}_{m=1}c_m
\left(\sum_{n\in\Z}A_n(\theta_m)\;e^{i\alpha_n(\theta_m)x_1+i[\beta_n(\theta_m)
-\beta_{\imath}(\theta_{\tilde m})]x_2}\right)dx_2=0
\en
for all $x_1\!\in\!\mathbb{R}$.
In fact, (\ref{1119+2}) {\color{hw}follows easily} from Lemma \ref{lem-x} (iv).
To prove (\ref{1119+3}), let $m\!\in\!\{1,\ldots,M\}$ be arbitrarily fixed.
For $N\!>\!0$ large enough we set $\mathcal J_1(N):=\{n\in\Z:|\alpha_n(\theta_m)|>k,|n|>N\}$
and $\mathcal J_2(N):=\{n\in\Z:|\alpha_n(\theta_m)|>k,|n|\leq N\}$.
Using $|e^{-i\beta_{\imath}(\theta_{\tilde m})x_2}|\!=\!1$, it follows from Lemma \ref{lem-x} (ii) that
\be\label{eq3}
\lim_{N\rightarrow+\infty}\sum_{n\in\mathcal J_1(N)}
\left|A_n(\theta_m)\;e^{i\alpha_n(\theta_m)x_1
+i[\beta_n(\theta_m)-\beta_{\imath}(\theta_{\tilde m})]x_2}\right|=0
\en
uniformly for all $x\in U_h$.
For any fixed $N\in\Z_+$, since $\mathcal J_2(N)$ is a finite set and
$i\beta_n(\theta_m)<0$ for all $n\in\mathcal J_2(N)$,
we have
\be\label{eq8}
\lim_{x_2\rightarrow+\infty}\sum_{n\in\mathcal J_2(N)}
\left|A_n(\theta_m)\;e^{i\alpha_n(\theta_m)x_1
+i[\beta_n(\theta_m)-\beta_{\imath}(\theta_{\tilde m})]x_2}\right|=0
\en
uniformly for all $x_1\in\mathbb{R}$.
Since $\mathcal J_3:=\{n\in\Z:|\alpha_n(\theta_m)|\le\!k\}$ is also a finite set and $\beta_n(\theta_m)\ge0>\beta_{\imath}(\theta_{\tilde m})$ for all $n\in\mathcal J_3$,
it follows from Lemma \ref{lem-x} (iv) that
\be\label{eq4}
\lim_{H\rightarrow+\infty}\frac1H\int_{H}^{2H}\sum_{n\in\mathcal J_3}
A_n(\theta_m)\;e^{i\alpha_n(\theta_m)x_1+i[\beta_n(\theta_m)-\beta_{\imath}(\theta_{\tilde m})]x_2}dx_2=0
\en
uniformly for all $x_1\!\in\!\mathbb{R}$.
This, together with (\ref{eq3})--(\ref{eq4}), implies that (\ref{1119+3}) holds.

Combining (\ref{1119+1})--(\ref{1119+3}), we arrive at
\be\label{1119+4}
\sum_{m\in\mathcal I_{\tilde m}}c_me^{i\alpha_{\imath}(\theta_m)x_1}=0,\quad x_1\in\R.
\en
Multiplying (\ref{1119+4}) by $e^{-i\alpha_{\imath}(\theta_{\tilde m})x_1}$ we obtain
\ben
\sum_{m\in\mathcal K_{\tilde m}}c_m+\sum_{m\in\mathcal I_{\tilde m}\ba\mathcal K_{\tilde m}}c_m e^{i[\alpha_{\imath}(\theta_m)-\alpha_{\imath}(\theta_{\tilde m})]x_1}=0,\quad x_1\in\R,
\enn
where
$\mathcal K_{\tilde m}:=\{m\in\mathcal I_{\tilde m}:\alpha_{\imath}(\theta_m)
=\alpha_{\imath}(\theta_{\tilde m})\}$.
Obviously, $\mathcal K_{\tilde m}\!=\!\{\tilde m\}$.
Then it follows from Lemma \ref{lem-x} (iv) that $c_{\tilde m}\!=\!0$
By the arbitrariness of $\tilde m$ it follows that $c_m\!=\!0$ for all $m=1,\ldots,M$, implying that
$\{u(x;\theta_m)\}_{m=1}^M$ are linearly independent functions in $\Omega$.
\end{proof}

\begin{rem}\label{rem0}{\rm
By (\ref{ui}) the total field $u$ to the diffraction problem (\ref{plane-wave})--(\ref{Rayleigh-up})
is given by
\be\label{211108-7}
u(x)=\sum_{n\in\Z\cup\{\imath\}}A_ne^{i\alpha_n x_1+i\beta_n x_2},\quad x\in U_h.
\en
We claim that
\be\label{eq5}
u \not\equiv 0~\textrm{in} ~\Om.
\en
Assume to the contrary that $u\equiv0$ in $\Om$. Then,
proceeding as in the proof of Lemma \ref{lem-x2}, we first multiply (\ref{211108-7}) by
$e^{-i\beta_{\imath}x_2}$ and then by $e^{-i\alpha_{\imath}x_1}$ to obtain that $A_{\imath}=0$,
which contradicts to the fact that $A_{\imath}=1$.
This implies that (\ref{eq5}) holds.
}
\end{rem}

In the remaining part of this paper, we consider two periodic curves $\G^{(1)}$ and $\G^{(2)}$
with periods $L_1>0$ and $L_2>0$, respectively.
Denote by $\Omega_{j}$ the unbounded connected domain above $\G^{(j)}$ for $j=1,2$.
Set $\G_h:=\{x:x_2=h\}$ for some $h>\max\{x_2\!:\!x\in\G^{(1)}\cup\G^{(2)}\}$.
Denoted by $u^s_j(x;\theta)$ and $u_j(x;\theta)$ the scattered field and total field, respectively,
for incident plane wave $u^i(x;\theta)$ with $\theta\!\in\!(-\pi/2,\pi/2)$ corresponding to
the curve $\G^{(j)}$, $j=1,2$.
Analogously, denote by $(\alpha_n^{(j)},\beta_n^{(j)})$ the pair $(\alpha_n,\beta_n)$ (see (\ref{beta})
and (\ref{ui})) and by $A_n^{(j)}$ the Rayleigh coefficient $A_n$ in (\ref{Rayleigh-up}) and (\ref{ui})
corresponding to $\Gamma=\Gamma^{(j)}$ for $n\in\Z\cup\{\imath\}$ and $j=1,2$.

\section{Determination of grating period from phased data}\label{sec3}
\setcounter{equation}{0}

In this section we consider the inverse problem, that is, whether it is possible to determine
the period of a periodic curve from phased near-field data corresponding to one incident plane wave.
Since the total field $u$ to the forward diffraction model (\ref{plane-wave})--(\ref{Rayleigh-up})
is required to be $\al$-quasi-periodic, it is seen that $e^{-i\alpha x_1}u(x)$ is $L$-periodic
with respect to $x_1$. Actually, this is also implied by (\ref{Rayleigh-up}) and (\ref{ui}).
However, the period $L$ may not be the minimum period of $e^{-i\alpha x_1}u(x)$, as illustrated
in the following remark which presents two diffraction grating curves with different minimum periods
which can generate identical near-field data for one incident plane wave. Such an example was
motivated by the classification of unidentifiable polygonal diffraction gratings using one incident
plane wave; see \cite{B2010,BZZ,EH2010,EH2011}.

\begin{rem}{\rm
Consider the example with $u=u^i+u^s$, where
\be\label{814-1}
u^i(x)=e^{i(-x_1-\sqrt{3}x_2)},\quad u^s(x)=e^{i(x_1+\sqrt{3}x_2)}-e^{-2i x_1}-e^{2i x_1}.
\en
Obviously, $u^i$ is a plane wave defined as in (\ref{plane-wave}) with incident angle
$\theta=-\pi/6$ and wave number $k=2$, implying that $\alpha=-1$.
Note that, if we choose the period $L=2\pi$ then the Rayleigh frequency occurs
(since $\beta_{-1}=\beta_3=0$ in this case). A straightforward calculation shows
\be\label{total-u}
u(x)=2\cos(x_1+\sqrt{3}x_2)-2\cos(2x_1)=-4\sin\frac{3x_1+\sqrt{3}x_2}{2}\,\sin\frac{-x_1+\sqrt{3}x_2}{2}.
\en
Therefore, the zeros of $u(x)$ consist of two families of parallel lines:
\ben
l_n^{(1)}:=\{x=(x_1, x_2)\in \R^2: 3x_1+\sqrt{3}x_2=2n\pi\},\\
l_n^{(2)}:=\{x=(x_1, x_2)\in \R^2: -x_1+\sqrt{3}x_2=2n\pi\}
\enn
for $n\in\Z$, which form a grid in $\R^2$, as illustrated by Figure \ref{counterexample}.
\begin{figure}[hbtp]
  \centering
  \includegraphics[width=0.95\textwidth]{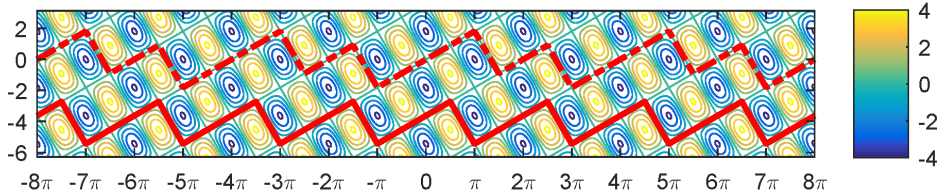}
  \caption{Contour of the total field $u$ given by (\ref{total-u}).
   The red solid line '-' and the red dash-dot line '-.' denote
   two grating curves with different minimum periods.
   }\label{counterexample}
\end{figure}
It is obvious that the two curves $\G^{(1)}$ and $\G^{(2)}$ plotted by the red solid line '-' and
the red dashed line '-.', respectively, as shown in Figure \ref{counterexample}, lie on the above grid.
The minimum period of $\G^{(1)}$ and $\G^{(2)}$ is $L_1=2\pi$ and $L_2=4\pi$, respectively.
From the above discussions and the formula (\ref{814-1}), it can be seen that
$u^s$ is the scattered field to the diffraction problem (\ref{plane-wave})--(\ref{Rayleigh-up})
with the curve $\Gamma\!=\!\Gamma^{(1)}$ and the period $L=L_1$,
and satisfies the Rayleigh expansion (\ref{Rayleigh-up}) with nonzero
Rayleigh coefficients $A_2^{(1)}\!=\!1$, $A_{-1}^{(1)}\!=\!A_3^{(1)}\!=\!-1$.
However, on the other hand, it is also easily seen that $u^s$ is the scattered field
to the diffraction problem (\ref{plane-wave})--(\ref{Rayleigh-up})
with the curve $\Gamma\!=\!\Gamma^{(2)}$ and the period $L\!=\!L_2$,
and satisfies the Rayleigh expansion (\ref{Rayleigh-up}) with nonzero Rayleigh
coefficients $A_4^{(2)}\!=\!1$, $A_{-2}^{(2)}\!=\!A_6^{(2)}\!=\!-1$.
This example shows that it is impossible to determine the minimum period (also the shape)
of a grating curve from phased near-field data corresponding to one incident plane wave.
}
\end{rem}

In general, one can only find a common period of two grating curves if their scattered fields coincide.
This will be proved rigorously in Theorem \ref{TH-phase-period} below, where the
periodic curves do not need to satisfy the smoothness Conditions (i) and (ii).

\begin{thm}\label{TH-phase-period}
Suppose $\theta\in(-\pi/2,\pi/2)$ is an arbitrarily fixed incident angle.
Let $\Gamma^{(1)}$ and $\Gamma^{(2)}$ be two periodic curves.
If the corresponding scattered fields satisfy
\be\label{x-1}
u^s_1(x;\theta)\!=\!u^s_2(x;\theta)\quad
\mbox{on}\quad x_2\!=\!h>\max\{x_2\!:\!x\!\in\!\G^{(1)}\cup\G^{(2)}\},
\en
then there exists $L>0$ such that $L$ is a period of both $\Gamma^{(1)}$ and $\Gamma^{(2)}$.
\end{thm}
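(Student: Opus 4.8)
The plan is to extract information about the periods $L_1$ and $L_2$ from the Rayleigh coefficients of the two scattered fields. Write $\alpha:=k\sin\theta$, so that $\alpha_n^{(j)}=\alpha+2n\pi/L_j$. Restricting the Rayleigh expansions (\ref{Rayleigh-up}) of $u^s_1$ and $u^s_2$ to the line $x_2=h$ and using the hypothesis (\ref{x-1}), we obtain
\[
\sum_{n\in\Z} A_n^{(1)} e^{i\beta_n^{(1)} h}\, e^{i\alpha_n^{(1)} x_1} \;=\; \sum_{m\in\Z} A_m^{(2)} e^{i\beta_m^{(2)} h}\, e^{i\alpha_m^{(2)} x_1}, \qquad x_1\in\R,
\]
with both series absolutely and uniformly convergent by Lemma~\ref{lem-x}(ii). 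Since the exponents $\alpha_n^{(j)}$, $n\in\Z$, on each side are pairwise distinct (because $L_j>0$), I would extract the coefficients just as in the proof of Lemma~\ref{lem-x2} and Remark~\ref{rem0}: for fixed $\gamma\in\R$, multiply by $e^{-i\gamma x_1}$, apply $\frac1T\int_T^{2T}(\,\cdot\,)\,dx_1$, and let $T\to+\infty$, using Lemma~\ref{lem-x}(iv) for the finitely many propagating modes and the absolute convergence to bound the evanescent tail uniformly in $T$. This shows that the two sets of active frequencies, $\{\alpha_n^{(1)}:A_n^{(1)}\neq0\}$ and $\{\alpha_m^{(2)}:A_m^{(2)}\neq0\}$, coincide, and that $A_n^{(1)}=A_m^{(2)}$ whenever $\alpha_n^{(1)}=\alpha_m^{(2)}$ (note that then $\beta_n^{(1)}=\beta_m^{(2)}$, since $\beta$ is determined by the corresponding $\alpha$ and by $k$).

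Next, not all coefficients $A_n^{(1)}$ can vanish: otherwise $u^s_1\equiv0$ in $U_h$, hence $u^s_1\equiv0$ in $\Omega_1$ by unique continuation, so $u_1=u^i$ would vanish on $\Gamma^{(1)}$, which is impossible because $|u^i|\equiv1$. Fix $n_0$ with $A_{n_0}^{(1)}\neq0$; by the previous step $\alpha_{n_0}^{(1)}=\alpha_{m_0}^{(2)}$ for some $m_0\in\Z$, i.e.\ $n_0 L_2 = m_0 L_1$. If $n_0\neq0$, then $m_0\neq0$ as well, and $L:=|m_0|\,L_1=|n_0|\,L_2>0$ is a positive integer multiple of both $L_1$ and $L_2$; hence $L$ is a period of $\Gamma^{(1)}$ and of $\Gamma^{(2)}$, and the theorem follows in this case. (Equivalently, $e^{-i\alpha x_1}u^s_1(x_1,h)$ is a continuous function of $x_1$ that is both $L_1$- and $L_2$-periodic; unless it is constant, its group of periods equals $c\Z$ for some $c>0$, which forces $L_1/L_2\in\mathbb{Q}$ and again yields a common integer multiple.)

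It remains to handle the degenerate case in which the only nonzero coefficient is $A_0^{(1)}$, so $u^s_1=A_0^{(1)}e^{i\alpha x_1+i\beta_0 x_2}$ in $U_h$ and, since the active-frequency sets coincide, likewise $u^s_2=A_0^{(2)}e^{i\alpha x_1+i\beta_0 x_2}$, where $\beta_0=k\cos\theta$ (as $|\alpha|<k$). By unique continuation the formula for $u^s_1$ holds throughout $\Omega_1$, so
\[
u_1=u^i+u^s_1=e^{i\alpha x_1}\bigl(e^{-ik\cos\theta\,x_2}+A_0^{(1)}e^{ik\cos\theta\,x_2}\bigr)\quad\mbox{in }\Omega_1 .
\]
The Dirichlet condition $u_1=0$ on $\Gamma^{(1)}$ then forces $e^{2ik\cos\theta\,x_2}=-1/A_0^{(1)}$ on $\Gamma^{(1)}$; since $\Gamma^{(1)}\neq\emptyset$ this requires $|A_0^{(1)}|=1$, and the solution set of this relation in $\R^2$ is a union of equally spaced horizontal lines. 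As $\Gamma^{(1)}$ is connected it must lie on one of these lines, and the same argument shows $\Gamma^{(2)}$ is a horizontal line; since every $L>0$ is a period of a horizontal line, the proof is complete.

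The step I expect to require the most care is the coefficient-extraction step in the first part---justifying the term-by-term integration of the infinite Rayleigh series and the interchange with the limit $T\to+\infty$ uniformly in $x_1$---but this is essentially the computation already carried out in the proof of Lemma~\ref{lem-x2}, so I would adapt that argument rather than redo it. The only other delicate point is the degenerate single-plane-wave case, where one must read off from the explicit zero set of $u^i+u^s_1$ that the grating curve is flat.
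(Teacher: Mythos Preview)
Your argument is correct and arrives at the conclusion by a somewhat more direct route than the paper's. The paper first extends the equality (\ref{x-1}) from the line $\Gamma_h$ to all of $U_h$ by invoking a half-plane uniqueness theorem \cite{Zhang98}, then splits into the cases $L_1/L_2\in\mathbb{Q}$ and $L_1/L_2\notin\mathbb{Q}$; in the irrational case it carries out a two-variable extraction (integrating in $x_2$ over $[H,2H]$ to separate modes by their $\beta$-values, then in $x_1$) to show that \emph{all} nonzero-mode Rayleigh coefficients vanish, forcing both gratings to be flat lines. You instead stay on the single line $x_2=h$, match the active $x_1$-frequencies directly via a one-variable averaging (justified by absolute convergence of the Rayleigh series on $\Gamma_h$ together with dominated convergence), and observe that a single nonzero coefficient $A_{n_0}^{(1)}$ with $n_0\neq0$ already produces a common integer multiple of $L_1$ and $L_2$; only when the zero mode alone survives do you need the explicit flat-grating analysis, which you handle exactly as the paper does in its Step~3. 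Your approach thus bypasses both the half-plane uniqueness step and the rational/irrational dichotomy. The paper's longer route has the compensating advantage that its two-variable extraction technique is reused essentially verbatim in later arguments (notably in the phase-retrieval proof of Theorem~\ref{TH-phaseless-phase}), so developing it here serves an expository purpose. One small remark: your reference to ``just as in the proof of Lemma~\ref{lem-x2}'' is slightly misleading, since that lemma integrates in $x_2$ first; what you actually need is the simpler $x_1$-only version, which you correctly sketch.
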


\begin{proof}
Suppose $L_j>0$ is a period of the curve $\G^{(j)}$, $j\!=\!1,2$. Then the corresponding
scattered field $u^s_j(x;\theta)$ satisfies the following Rayleigh expansions
\be\label{z-1}
u^s_j(x)=\sum_{n\in\Z}A_n^{(j)}e^{i\alpha^{(j)}_nx_1+i\beta^{(j)}_nx_2},\quad
x\in U_h:=\{x\in\R^2:x_2>h\},j=1,2,
\en
where $\alpha^{(j)}_n$, $\beta_n^{(j)}$ and the coefficients $A_n^{(j)}$, that depends on $k$, $\theta$
and $\Gamma^{(j)}$, are defined analogously to $\alpha_n$, $\beta_n$ and $A_n$ with $\Gamma$
replaced by $\Gamma_j$. Note that the following conditions are fulfilled:

(i) $u^s_1-u^s_2$ satisfies the Helmholtz equation in $U_h$;

(ii) $u^s_1-u^s_2=0$ on $\G_h:=\{x:x_2=h\}$;

(iii) $\sup_{x\in U_h}|u^s_1(x)-u^s_2(x)|<{\color{hw}+\infty}$;

(iv) $u^s_1-u^s_2$ satisfies the upward propagating radiation condition
(see \cite[Definition 2.2]{Zhang98}).\\
In fact, (i) follows from (\ref{plane-wave}) and (\ref{equation}), and (ii) follows from (\ref{x-1}).
(iii) and (iv) are implied by the Rayleigh expansions (\ref{z-1}) (see Lemma \ref{lem-x} (i)
and \cite[pp. 1777]{Zhang98}).
By uniqueness to the Dirichlet boundary value problem in $U_h$ (see \cite[Theorem 3.4]{Zhang98}),
it follows that
\be\label{x-eq2}
u^s_1(x;\theta)=u^s_2(x;\theta),\quad x\in U_h.
\en

We now consider the following two cases.

\textbf{Case 1:} ${L_1}/{L_2}$ is rational.

Let $p/q={L_1}/{L_2}$ with reduced fraction $p/q$ and positive integers $p,q\in\Z_+$.
Set $L:=qL_1$. Then $L=pL_2$.
Thus $L$ is a common period for both $\G^{(1)}$ and $\G^{(2)}$.

\textbf{Case 2:} ${L_1}/{L_2}$ is irrational.

We claim that any $L\!>\!0$ is a period of both $\G^{(1)}$ and $\G^{(2)}$.
To do this, we first deduce from the fact that ${L_1}/{L_2}$ is irrational that
\be\label{neq}
\alpha^{(1)}_m\!\neq\!\alpha^{(2)}_n\;\text{for all}\;(m,n)\!\in\!\Z^2\ba\{(0,0)\}\text{ and }\alpha^{(1)}_0\!=\!\alpha^{(2)}_0\!=\!k\sin\theta.
\en
It follows from (\ref{z-1}) and (\ref{x-eq2}) that
\be\label{814-2}
\sum_{n\in\Z}A_n^{(1)}e^{i\alpha^{(1)}_nx_1+i\beta^{(1)}_nx_2}-\sum_{n\in\Z}
A_n^{(2)}e^{i\alpha^{(2)}_nx_1+i\beta^{(2)}_nx_2}=0,\quad x\in U_h.
\en
The proof of this case can be divided into three steps as follows.

\textbf{Step 1.} We prove that
\be\label{1120-1}
&&A_0^{(1)}\!=\!A_0^{(2)},\\ \label{1120-1-}
&&A_n^{(1)}\!=\!0\text{ for all }n\in\Z\ba\{0\}\text{ such that }|\alpha_n^{(1)}|\leq k,\\ \label{1120+1}
&&A_n^{(2)}\!=\!0\text{ for all }n\in\Z\ba\{0\}\text{ such that }|\alpha_n^{(2)}|\leq k.
\en
Let $\tilde n\!\in\!\Z$ be arbitrarily fixed such that $|\alpha_{\tilde n}^{(1)}|\!\leq\!k$.
Multiplying (\ref{814-2}) by $e^{-i\beta^{(1)}_{\tilde n}x_2}$ we obtain
\be\no
&&\sum_{n\in\mathcal I^{(1)}_{\tilde n}}A_n^{(1)}e^{i\alpha^{(1)}_nx_1}
+\sum_{n\in\Z\ba\mathcal I^{(1)}_{\tilde n}}A_n^{(1)}
e^{i\alpha^{(1)}_nx_1+i(\beta^{(1)}_n-\beta^{(1)}_{\tilde n})x_2}\\ \label{211107-3}
&&-\sum_{n\in\mathcal I^{(2)}_{\tilde n}}A_n^{(2)}e^{i\alpha^{(2)}_nx_1}
-\sum_{n\in\Z\ba\mathcal I^{(2)}_{\tilde n}}A_n^{(2)}
e^{i\alpha^{(2)}_nx_1+i(\beta^{(2)}_n-\beta^{(1)}_{\tilde n})x_2}=0,\quad x\in U_h,
\en
where $\mathcal I^{(j)}_{\tilde n}:=\{n\in\Z:\beta^{(j)}_n=\beta^{(1)}_{\tilde n}\}$
is at most a finite set for $j=1,2$.
Analogously to (\ref{1119+3}), using $|e^{i\beta^{(1)}_{\tilde n}x_2}|=1$, we can apply
Lemma \ref{lem-x} (ii) and (iv) to obtain
\ben
\lim_{H\rightarrow{\color{hw}+\infty}}\frac1H\int_H^{2H}\sum_{n\in\Z\ba\mathcal I^{(j)}_{\tilde n}}
A_n^{(j)}e^{i\alpha^{(j)}_nx_1+i(\beta^{(j)}_n-\beta^{(1)}_{\tilde n})x_2}dx_2=0,\quad j=1,2,
\enn
for all $x_1\!\in\!\mathbb{R}$.
Therefore, it follows from (\ref{211107-3}) that
\be\label{211107-4}
\sum_{n\in\mathcal I^{(1)}_{\tilde n}}A_n^{(1)}e^{i\alpha^{(1)}_nx_1}
-\sum_{n\in\mathcal I^{(2)}_{\tilde n}}A_n^{(2)}e^{i\alpha^{(2)}_nx_1}=0,\quad x_1\in\R.
\en
Similarly, multiplying (\ref{211107-4}) by $e^{-i\alpha^{(1)}_{\tilde n}x_1}$ we can deduce
from Lemma \ref{lem-x} (iv) that
\be\label{211108-}
\sum_{n\in\mathcal K^{(1)}_{\tilde n}}A_n^{(1)}-\sum_{n\in\mathcal K^{(2)}_{\tilde n}}A_n^{(2)}=0.
\en
where $\mathcal K^{(j)}_{\tilde n}:=\{n\in\Z:\alpha^{(j)}_{n}=\alpha^{(1)}_{\tilde n},
\beta^{(j)}_n=\beta^{(1)}_{\tilde n}\}$, $j=1,2$.
Obviously, $\mathcal K^{(1)}_{\tilde n}=\{\tilde n\}$.
In view of (\ref{neq}), we know that $\mathcal K^{(2)}_{\tilde n}=\{0\}$ if $\tilde n=0$ and
$\mathcal K^{(2)}_{\tilde n}=\emptyset$ if $\tilde n\in\Z\ba\{0\}$.
These, together with (\ref{211108-}), imply (\ref{1120-1}) and (\ref{1120-1-}).
By interchanging the role of $u_1^s$ and $u_2^s$, {\color{hw}we can employ} a similar argument
as above to obtain (\ref{1120+1}).

\textbf{Step 2.} We prove that
\be\label{1120-2}
&&A_n^{(1)}\!=\!0\text{ for all }n\in\Z\text{ such that }|\alpha_n^{(1)}|>k,\\ \label{1120-2-}
&&A_n^{(2)}\!=\!0\text{ for all }n\in\Z\text{ such that }|\alpha_n^{(2)}|>k.
\en
Set $\mathcal P^{(j)}\!:=\!\{n\!\in\!\Z\!:\!|\alpha^{(j)}_n|\!>\!k\}$, $j\!=\!1,2$.
It follows from (\ref{814-2})--(\ref{1120+1}) that
\be\label{211107-5}
\sum_{n\in\mathcal P^{(1)}}A_n^{(1)}e^{i\alpha^{(1)}_nx_1+i\beta^{(1)}_nx_2}-\sum_{n\in\mathcal P^{(2)}}A_n^{(2)}e^{i\alpha^{(2)}_nx_1+i\beta^{(2)}_nx_2}=0,\quad x\in U_h.
\en
By (\ref{beta}), we can rearrange the elements in
$\{(1,n):n\in\mathcal P^{(1)}\}\cup\{(2,n):n\in\mathcal P^{(2)}\}$ as a sequence
$\{(p_\ell,q_\ell)\}_{\ell\in\Z_+}$ such that $\beta^{(p_\ell)}_{q_\ell}=ib_\ell$ with $b_\ell>0$ and
$b_\ell\le b_{\ell+1}$ for all $\ell\in\Z_+$.
Obviously, $b_\ell\!\rightarrow\!+\infty$ as $\ell\rightarrow+\infty$.

Without loss of generality, we may assume that $p_1=1$ and $q_1=\tilde n$ for some
${\tilde n}\in\mathcal P^{(1)}$ and thus $\beta^{(p_1)}_{q_1}=\beta^{(1)}_{\tilde n}$.
Let $\mathcal I^{(j)}_{\tilde n}$ ($j\!=\!1,2$) be defined as in Step 1.
It is clear that $\mathcal I^{(j)}_{\tilde n}=\{n\in\mathcal P^{(1)}:\beta^{(j)}_n=\beta^{(1)}_{\tilde n}\}$
and is at most a finite set.
Then, multiplying (\ref{211107-5}) by $e^{-i\beta^{(1)}_{\tilde n}x_2}$ we obtain
\be\no
&&\sum_{n\in\mathcal I^{(1)}_{\tilde n}}A_n^{(1)}e^{i\alpha^{(1)}_nx_1}+\sum_{n\in\mathcal P^{(1)}\ba\mathcal I^{(1)}_{\tilde n}}A_n^{(1)}e^{i\alpha^{(1)}_nx_1+i(\beta^{(1)}_n-\beta^{(1)}_{\tilde n})x_2}\\ \label{211108-3}
&&-\sum_{n\in\mathcal I^{(2)}_{\tilde n}}A_n^{(2)}e^{i\alpha^{(2)}_nx_1}-\sum_{n\in\mathcal P^{(2)}\ba\mathcal I^{(2)}_{\tilde n}}A_n^{(2)}e^{i\alpha^{(2)}_nx_1+i(\beta^{(2)}_n-\beta^{(1)}_{\tilde n})x_2}=0,\;x\in U_h.
\en

For $N>0$ large enough and $j=1,2$, we set
$\mathcal Q^{(j)}_1(N):=\{n\in\mathcal P^{(j)}\ba\mathcal I^{(j)}_{\tilde n}:|n|>N\}$ and
$\mathcal Q^{(j)}_2(N):=\{n\in\mathcal P^{(j)}\ba\mathcal I^{(j)}_{\tilde n}:|n|\le N\}$.
By Lemma \ref{lem-x} (iii), we have
\be\label{eq6}
\lim_{N\rightarrow+\infty}\sum_{n\in\mathcal Q^{(j)}_1(N)}\left|A_n^{(j)}e^{i\alpha^{(j)}_nx_1+i(\beta^{(j)}_n-\beta^{(1)}_{\tilde n})x_2}\right|=0,
\quad j=1,2,
\en
uniformly for all $x\!\in\!U_h$.
For any fixed $N\!>\!0$, since $\mathcal Q_2^{(j)}(N)$ is a finite set and
$i(\beta^{(j)}_n\!-\!\beta^{(1)}_{\tilde n})\!<\!0$ for all $n\!\in\!\mathcal Q_2^{(j)}(N)$
due to the definition of $\beta^{(1)}_{\tilde n}$, thus we have
\be\label{eq7}
\lim_{x_2\rightarrow+\infty}\sum_{n\in\mathcal Q_2^{(j)}(N)}\left|A_n^{(j)}e^{i\alpha^{(j)}_nx_1+i(\beta^{(j)}_n-\beta^{(1)}_{\tilde n})x_2}\right|=0,
\quad j=1,2,
\en
uniformly for all $x_1\!\in\!\mathbb{R}$.
Thus, it follows from (\ref{eq6}) and (\ref{eq7}) that
\ben
\lim_{x_2\rightarrow+\infty}\sum_{n\in\mathcal P^{(j)}\ba\mathcal I^{(j)}_{\tilde n}}
A_n^{(j)}e^{i\alpha^{(j)}_nx_1+i(\beta^{(j)}_n-\beta^{(1)}_{\tilde n})x_2}=0,\quad j=1,2,
\enn
for all $x_1\!\in\!\mathbb{R}$.
This, together with (\ref{211108-3}), implies that (\ref{211107-4}) holds.
Analogously to Step 1, multiplying (\ref{211107-4}) by $e^{-i\alpha^{(1)}_{\tilde n}x_1}$,
we can apply Lemma \ref{lem-x} (iv) to obtain (\ref{211108-}) and thus
$A^{(1)}_{\tilde n}\!=\!{\color{hw}A^{(p_1)}_{q_1}}\!=\!0$.
Taking this into (\ref{211107-5}), we obtain that (\ref{211107-5}) holds with $\mathcal P^{(1)}$
replaced by $\mathcal P^{(1)}\backslash\{q_1\}$. Then using the same argument as above, we can
obtain that $A^{(p_2)}_{q_2}=0$. Now, we can repeat the same
argument again to obtain that $A^{(p_\ell)}_{q_\ell}=0$ for all $\ell\in \mathbb{Z}_+$.
This means that (\ref{1120-2}) and (\ref{1120-2-}) hold.

\textbf{Step 3.}
Combining (\ref{1120-1})--(\ref{1120+1}), (\ref{1120-2}) and (\ref{1120-2-}), we arrive at
\ben
A_0^{(1)}=A_0^{(2)}\;\text{and}\;A_n^{(1)}=A_n^{(2)}=0\;\text{for}\;n\in\Z\ba\{0\}.
\enn
Then by the Dirichlet boundary condition imposed on $\G^{(j)}$ ($j=1,2$), we have
\ben
e^{i\alpha_0x_1-i\beta_0x_2}=u^i(x)=-u^s_j(x)=-A_0^{(j)}e^{i\alpha_0x_1+i\beta_0x_2},\quad x\in\Gamma^{(j)},j=1,2.
\enn
This further implies that $\G^{(j)}$ ($j=1,2$) is a straight line parallel to the $x_1$-axis
since $A_0^{(j)}$ is a constant.
Thus, any $L\!>\!0$ is a common period of $\G^{(1)}$ and $\G^{(2)}$.
\end{proof}

\section{Uniqueness with phased data}\label{sec4}
\setcounter{equation}{0}

In this section, we prove that a periodic curve with Dirichlet boundary condition fulfilling
Condition (i) or Condition (ii) can be uniquely determined by the fixed-frequency near-field data
corresponding to incident plane waves with distinct angles (i.e., Theorem \ref{TH-main} with phased data).
This differs from \cite{HF97}, where fixed-direction incident plane waves with different frequencies
are used, and this also differs from \cite{K94} which involves fixed-frequency quasi-periodic incident
waves with the same phase shift.
For the inverse problem to recover a periodic curve from near-field data corresponding
to incident plane waves with distinct directions, difficulties arise from the fact that
the corresponding total fields have different phase shifts since $\alpha=k\sin\theta$ depends
on the incident angle $\theta$.
We rephrase Theorem \ref{TH-main} with phased data in Theorem \ref{TH-phase} below, which is the
main uniqueness result of this section. Here we shall provide a proof based on both the ideas of Schiffer
for bounded obstacles (see \cite{CS1983}) and for periodic structures with multi-frequency data
(see \cite{HF97}) and the concept of dispersion relations (see, e.g., \cite{FJ16, Kuchment,SW2002})
arising from the analysis of photonic crystals.

\begin{thm}\label{TH-phase}
Let $\G^{(1)}$ and $\G^{(2)}$ be two periodic curves with Dirichlet boundary conditions.
Assume both of them satisfy Condition (i) or both of them satisfy Condition (ii).
Suppose that the periods of $\G^{(1)}$ and $\G^{(2)}$ are unknown.
If the corresponding total fields satisfy
\be\label{x-1-}
u_1(x;\theta_n)=u_2(x;\theta_n),\quad x\in \mathcal{S},\;n\in\Z_+,
\en
where $\{\theta_n\}_{n=1}^\infty$ are distinct incident angles in $(-\pi/2,\pi/2)$, then $\G^{(1)}=\G^{(2)}$.
Here, $\mathcal S\!\subset\!\G_h$ is a line segment with $\G_h\!:=\!\{x\!:\!x_2\!=\!h\}$ and $h\!>\!\max\{x_2\!:\!x\!\in\!\G^{(1)}\!\cup\!\G^{(2)}\}$ being an arbitrary constant.
\end{thm}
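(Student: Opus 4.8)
The plan is to run Schiffer's argument in the periodic setting, the new feature being that the ``obstacle'' Schiffer produces is no longer a bounded domain but may instead be a closed periodic waveguide. First I would propagate the data. Since the incident field is common to both curves, \eqref{x-1-} gives $u_1^s(\cdot;\theta_n)=u_2^s(\cdot;\theta_n)$ on $\mathcal S$ for every $n$; as each $u_j^s(\cdot;\theta_n)$ solves the Helmholtz equation in $\Om_j\supset\overline{U_h}$ it is real--analytic there, so its restriction to the line $\G_h$ is real--analytic in $x_1$ and agreement on the segment $\mathcal S$ propagates to all of $\G_h$. Exactly as in the proof of Theorem \ref{TH-phase-period}, uniqueness for the upward propagating Dirichlet problem in $U_h$ (\cite[Theorem 3.4]{Zhang98}) then upgrades this to $u_1^s(\cdot;\theta_n)=u_2^s(\cdot;\theta_n)$ in $U_h$, hence $u_1(\cdot;\theta_n)=u_2(\cdot;\theta_n)=:u(\cdot;\theta_n)$ in $U_h$. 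Now Theorem \ref{TH-phase-period} applies: if $L_1/L_2$ is irrational then (Case~2 of its proof) $\G^{(1)}$ and $\G^{(2)}$ are both straight lines parallel to the $x_1$--axis, and matching the explicit reflected total fields on $\G_h$ over the infinitely many angles $\theta_n$ forces the two lines to coincide, so we are done. Hence I may assume $L_1/L_2\in\mathbb Q$, so that $\G^{(1)},\G^{(2)},\Om_1,\Om_2$ share a common period $L>0$. Letting $\mathcal O$ be the ($L$--periodic) connected component of $\Om_1\cap\Om_2$ containing $U_h$, unique continuation gives $u_1(\cdot;\theta_n)=u_2(\cdot;\theta_n)$ throughout $\mathcal O$, and since $\partial\mathcal O\subset\G^{(1)}\cup\G^{(2)}$ and $u_j(\cdot;\theta_n)$ vanishes on $\G^{(j)}$, continuity up to the boundary yields $u(\cdot;\theta_n)=0$ on $\partial\mathcal O$.

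Next I would argue by contradiction, assuming $\G^{(1)}\neq\G^{(2)}$. After possibly interchanging the indices $1$ and $2$, the set $\Om_2\setminus\overline{\mathcal O}$ has nonempty interior; pick one of its connected components $D$, so that $D+nL$ is again such a component for every $n\in\Z$. Then $D$ is open, connected, bounded in the $x_2$--direction (it lies below $U_h\subset\mathcal O$) and $\partial D\subset\G^{(1)}\cup\G^{(2)}$. For each $n$ the function $v_n:=u_2(\cdot;\theta_n)|_D$ satisfies $\Delta v_n+k^2v_n=0$ in $D$ and $v_n=0$ on $\partial D$: on $\partial D\cap\G^{(2)}$ by the boundary condition, and on the remaining boundary points --- which lie in $\overline{\mathcal O}$ --- because $u_2(\cdot;\theta_n)=u(\cdot;\theta_n)=0$ there. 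Since $L$ is a period of $\G^{(2)}$, $v_n$ is $\alpha(\theta_n)$--quasi--periodic with respect to $L$. The component $D$ is then of one of two types: either it is invariant under $x_1\mapsto x_1+qL$ for some $q\in\Z_+$, so that $D$ is a closed periodic waveguide --- unbounded in $x_1$, bounded in $x_2$, with boundary consisting of arcs of $\G^{(1)}$ and $\G^{(2)}$ and hence again satisfying Condition (i) or Condition (ii); or $D$ is bounded.

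In the bounded case, $v_n\in H^1_0(D)$ and the Dirichlet Laplacian on the bounded open set $D$ has finite--dimensional eigenspaces by Rellich's embedding theorem, so only finitely many of the $v_n$ can be linearly independent. In the waveguide case, the Floquet--Bloch decomposition exhibits each $v_n$ as a $\mu_n$--quasi--periodic Dirichlet eigenfunction at energy $k^2$ on the bounded period cell of $D$, with $\mu_n\equiv\alpha(\theta_n)$ modulo the dual lattice; each fibre operator has compact resolvent (the cell being bounded with an admissible boundary), so its eigenspaces are finite--dimensional, and --- crucially --- the dispersion curves of $D$ contain no flat band, so that only finitely many Floquet parameters $\mu$ admit $k^2$ as a fibre eigenvalue; consequently the $v_n$ all lie in the finite--dimensional space $\bigoplus_{\mu}\ker(-\Delta_\mu-k^2)$, the sum over those finitely many $\mu$. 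Either way, $\{v_n\}_{n\ge 1}$ spans a finite--dimensional space. On the other hand this family is linearly independent: if $\sum_{n=1}^N c_n v_n=0$ in $D$ for some $N$, then by unique continuation in the connected domain $\Om_2\supset D$ we obtain $\sum_{n=1}^N c_n u_2(\cdot;\theta_n)=0$ in $\Om_2$, whence $c_1=\dots=c_N=0$ by Lemma \ref{lem-x2}. An infinite linearly independent family cannot lie in a finite--dimensional space --- a contradiction --- and therefore $\G^{(1)}=\G^{(2)}$.

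The hard part will be the no--flat--band statement for a closed periodic waveguide whose boundary is merely $C^3$ (Condition (i)) or analytic (Condition (ii)): this is precisely where the regularity hypotheses on $\G^{(j)}$ are genuinely used. Unlike the fixed--direction, multi--frequency problem of \cite{HF97}, varying the incident direction produces a $\mu$--eigenvalue problem rather than a $k$--eigenvalue problem, so the discreteness needed to apply the analytic Fredholm theory must be obtained together with an exclusion of flat dispersion curves, for which I would follow the arguments of \cite{SW2002}. The remaining points --- that $D$ is bounded or periodic as claimed, that the fibre operators have compact resolvent, and the precise quasi--periodicity and vanishing trace of $v_n$ --- are routine given the boundary regularity.
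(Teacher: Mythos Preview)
Your proposal is correct and follows essentially the same strategy as the paper: propagate the data by analyticity and the uniqueness result of \cite{Zhang98}, invoke Theorem~\ref{TH-phase-period} to obtain a common period, and then run Schiffer's argument with the bounded-domain versus closed-waveguide dichotomy, the latter being handled via the no-flat-band result of \cite{SW2002} exactly as you anticipate. The paper organizes the split as $\Gamma^{(1)}\cap\Gamma^{(2)}\neq\emptyset$ versus $=\emptyset$ and, in the waveguide case, reaches the contradiction by showing the $\mu$-eigenvalues form a discrete set (recasting (BVP) as an analytic compact-operator family and applying the analytic Fredholm theorem, Lemma~\ref{lem-x4}) rather than through your finite-dimensional eigenspace count, but these are equivalent formulations of the same mechanism.
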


Since $u_1$ and $u_2$ are analytic functions of $x\!\in\!\G_h$, (\ref{x-1-}) is equivalent to $u_1(x;\theta_n)\!=\!u_2(x;\theta_n)$ for all $x\!\in\!\G_h$ and $n\!\in\!\Z_+$.
Therefore, $u^s_1(x;\theta_n)\!=\!u^s_2(x;\theta_n)$ for all $x\!\in\!\G_h$ and $n\!\in\!\Z_+$.
Analogously to (\ref{x-eq2}), we have $u^s_1(x;\theta_n)\!=\!u^s_2(x;\theta_n)$ for all $x\in U_h$
and $n\in\Z_+$.
By analyticity we arrive at
\be\label{eq2}
u^s_1(x;\theta_n)=u^s_2(x;\theta_n),\quad x\in\Om',n\in \Z_+,
\en
where $\Omega'$ denotes the unbounded component of $\Omega_{1}\!\cap\!\Omega_{2}$ which can be
connected to $U_h$.
By Theorem \ref{TH-phase-period}, the above relation $(\ref{eq2})$ implies that there exists $L\!>\!0$
such that $L$ is a common period of $\Gamma^{(1)}$ and $\Gamma^{(2)}$.
Without loss of generality, we may assume $L\!=\!2\pi$ in the rest of this section.
Assume to the contrary that $\Gamma^{(1)}\!\neq\!\Gamma^{(2)}$.
We need to consider the following two cases:
\ben
\textbf{Case (i)}: \Gamma^{(1)}\cap\Gamma^{(2)}\neq\emptyset;\quad\quad
\textbf{Case (ii)}: \Gamma^{(1)}\cap\Gamma^{(2)}=\emptyset.
\enn
The proofs of Theorem \ref{TH-phase} for these two cases will be given in the following subsections.

\subsection{Proof of Theorem \ref{TH-phase} for \mbox{Case (i):} $\Gamma^{(1)}\cap \Gamma^{(2)}\neq\emptyset$.}\label{sub-case1}

\begin{figure}[!ht]
  \centering
  \subfigure[Both $\G^{(1)}$ and $\G^{(2)}$ are graphs]{
  \includegraphics[width=0.45\textwidth]{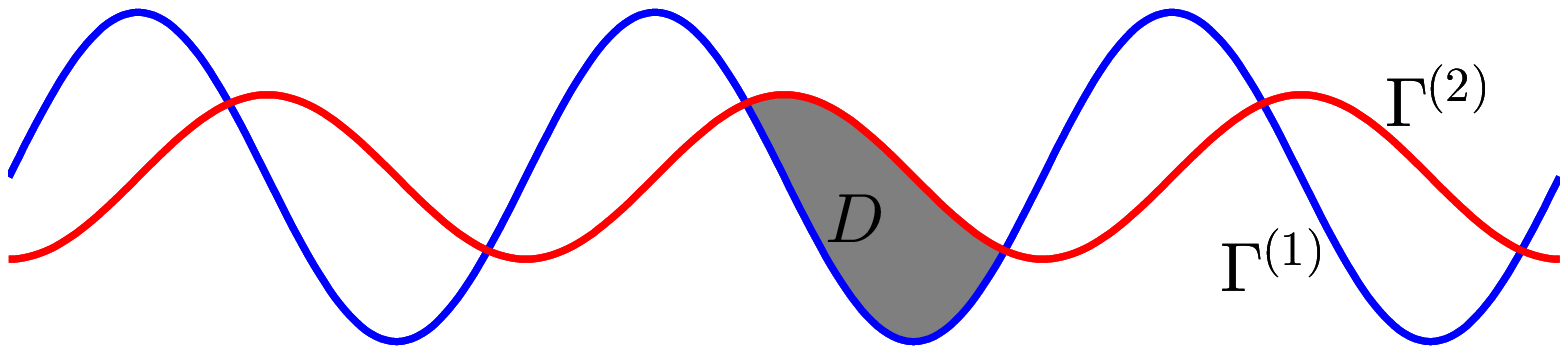}}
  \subfigure[Neither $\G^{(1)}$ nor $\G^{(2)}$ is a graph]{
  \includegraphics[width=0.45\textwidth]{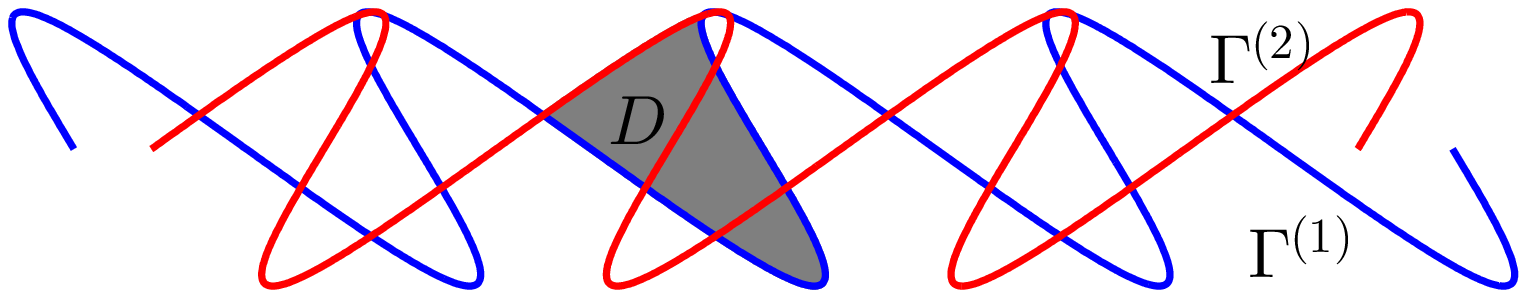}}
  \caption{The bounded domain $D$ in Case (i): $\Gamma^{(1)}\cap\Gamma^{(2)}\neq\emptyset$.}
  \label{fig-case1}
\end{figure}
Since $\Gamma^{(1)}\!\cap\!\Gamma^{(2)}\!\neq\!\emptyset$ and both $\G^{(1)}$ and $\G^{(2)}$
are $2\pi$-periodic, there exists at least one bounded domain $D$ enclosed by $\Gamma^{(1)}$ and $\Gamma^{(2)}$.
In other words, $\partial D\!\subset\!\Gamma^{(1)}\!\cup\!\Gamma^{(2)}$.
Without loss of general we may suppose that $D\subset\Om_{1}\ba\ov{\Om'}$
as shown in Figure \ref{fig-case1}.
It follows from Remark \ref{rem0}, formula (\ref{eq2}) and the Dirichlet boundary condition
of $u_j(x;\theta_n)$ on $\G^{(j)}$ that the total field $u_1(x;\theta_n):=u^i(x;\theta_n)+u^s_1(x;\theta_n)$
is a nontrivial solution to the eigenvalue problem
\ben
\Delta u+k^2u=0\quad\mbox{in }D,\quad u=0\quad\mbox{on }\partial D,
\enn
for all $n\!\in\!\Z_+$.
In other words, $u_1(x;\theta_n)$ is a Dirichlet eigenfunction of the negative Laplacian
in $D$ for each $n\!\in\!\Z_+$.
Recall from Lemma \ref{lem-x2} that $\{u_1(x;\theta_n)\}_{n=1}^N$ are linearly independent
functions in $D$ for any positive integer $N<+\infty$.
However, by a similar argument as in the proof of \cite[Theorem 5.1]{CK19},
it follows that there are at most finitely many independent Dirichlet eigenfunctions of the negative
Laplacian in $H_0^1(D)$ corresponding to the eigenvalue $k^2\!>\!0$.
This contradiction implies that Case (i) does not hold.

\begin{rem}{\rm
It should be remarked that, the proof of \cite[Theorem 5.1]{CK19} relies essentially on the a priori
estimate of solutions after the Gram-Schmidt orthogonalization of $\{u_1(x;\theta_n)\}_{n\in\Z_+}$
(see \cite[the third formula on page 140]{CK19}).
However, if $D$ is an unbounded periodic strip, as will be seen in Case (ii), it would be difficult
to establish an analogous a priori estimate of solutions with different incident angles (or equivalently,
with different phase shifts $k\sin\theta_n$) after the Gram-Schmidt orthogonalization. Hence,
the aforementioned arguments cannot be used for treating Case (ii).
}
\end{rem}

\subsection{Proof of Theorem \ref{TH-phase} for \mbox{Case (ii):} $\Gamma^{(1)}\cap \Gamma^{(2)}=\emptyset$.}\label{sub-case2}

We suppose without loss of generality that $\Gamma^{(2)}$ lies entirely above $\Gamma^{(1)}$ as shown
in Figure \ref{case2}.
\begin{figure}[!ht]
  \centering
  \subfigure[Both $\G^{(1)}$ and $\G^{(2)}$ are graphs]{
  \includegraphics[width=0.45\textwidth]{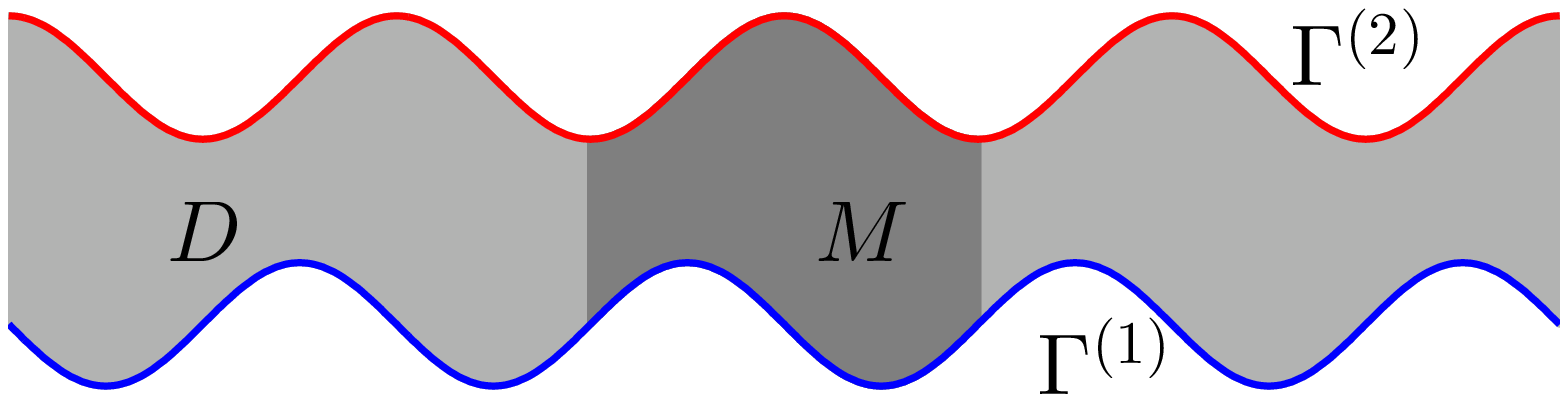}}
  \subfigure[$\G^{(1)}$ is not a graph and $\G^{(2)}$ is a graph]{
  \includegraphics[width=0.45\textwidth]{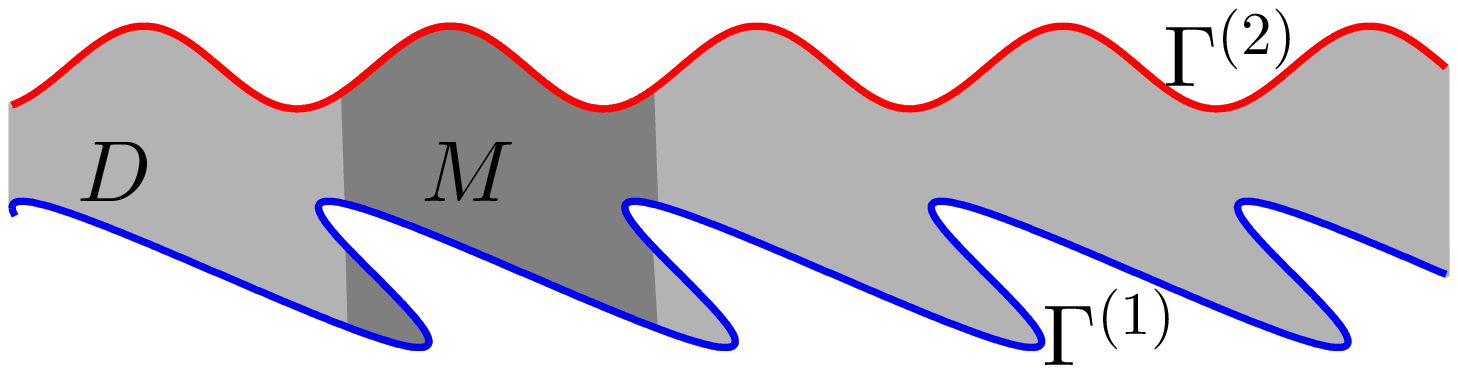}}
  \caption{The unbounded periodic strip $D$ and its one periodic cell $M$ in Case (ii): $\G^{(1)}\cap \G^{(2)}=\emptyset$.}\label{case2}
\end{figure}
Denote by $D$ the unbounded $2\pi$-periodic strip (waveguide) lying between the two curves.
To investigate the dependance of solutions on the quasi-periodic shift $\alpha=\alpha(\theta_n)=k\sin\theta_n$,
we set $w_n(x)\!:=\!e^{-i\alpha(\theta_n) x_1} u_1(x;\theta_n)$.
It then follows from (\ref{equation}) and (\ref{eq2}) that  $w_n$ satisfies
the periodic boundary value problem
\be\label{PBVP}
\left\{\begin{array}{lll}
 \nabla_{\alpha(\theta_n)} \cdot \nabla_{\alpha(\theta_n)}w_n+k^2w_n=0\quad\mbox{in}\quad D,\\
w_n=0\quad\mbox{on}\quad \Gamma^{(1)}\cup \Gamma^{(2)},\\
\mbox{$w_n$ is $2\pi$-periodic with respect to $x_1$ in $D$,}
\end{array}\right.
\en for all $n\in\Z_+$, where $\nabla_{\alpha(\theta_n)}:=(\partial_1+i\alpha(\theta_n),\partial_2)^\top$.
For $\alpha=k\mu$ with $\mu\in(-1,1)$, we consider the abstract Dirichlet boundary value problem
in a closed periodic waveguide $D$:
\ben {\rm (BVP)}\quad
\left\{\begin{array}{lll}
\nabla_\alpha\cdot\nabla_\alpha w+k^2w=0\quad\mbox{in}\quad D,\\
w=0\quad\mbox{on}\quad \Gamma^{(1)}\cup \G^{(2)},\\
\mbox{$w$ is $2\pi$-periodic with respect to $x_1$ in $D$.}
\end{array}\right.
\enn

\begin{defn}
For any fixed $k\!>\!0$, we say that $\mu\!\in\!(-1,1)$ is called a $\mu$-eigenvalue if the above
boundary value problem (BVP) admits a nontrivial solution in the space
$H^1_{0,0}(D):=\{w\in H^1_{loc}(D):w\;\text{is}\;2\pi\text{-periodic with respect to}~x_1,~w
=0~\textrm{on}~\partial D\}$.
Accordingly, the nontrivial solution is the associated eigenfunction.
\end{defn}

Since $u_1(x;\theta_n)\!\not\equiv\!0$ for $x\!\in\!\Om_{1}$, we conclude from (\ref{PBVP})
that $\sin\theta_n$ is a $\mu$-eigenvalue to (BVP) with the eigenfunction $w_n$ for all $n\!\in\!\Z_+$.
On the other hand, for any fixed $\mu\!\in\!(-1,1)$, we say that $k\!>\!0$ is called a $k$-eigenvalue
if (BVP) admits a nontrivial solution $w\!\in\!H^1_{0,0}(D)$.
As shown in \cite[Theorem 2.3]{HF97}, the $k$-eigenvalues form a discrete set on the positive real-axis
with the only accumulating  point at infinity  and the associated eigenspace for each $k$-eigenvalue
is of finite dimensions.
It is easy to observe that, if $w(x)$ solves (BVP) with $\mu\in(-1,1)$ and some $k_j(\mu)$,
then the conjugate $\overline{w}$ is also a nontrivial solution corresponding to $-\mu$. This implies
the even symmetry of $k_j(\mu)$ with respect to the line $\mu=0$, that is, $k_j(\mu)=k_j(-\mu)$
for each $\mu\in (-1,1)$.

The $\alpha$-dependent partial differential equation in (BVP) can be regarded as the Floquet-Bloch (FB)
transform of the Helmholtz equation $(\Delta+k^2)u=0$ in the $x_1$-direction with the
variable $\alpha\in\R$; see \cite{Kuchment, FJ16}. The Bloch theory in one direction was well-summarized
in \cite[Section 3]{FJ16} for deriving physically-meaningful radiation conditions in a closed periodic
waveguide.

Let us now recall the dispersion relations for the $2\pi$-periodic system (BVP), where the FB transform
variable $\alpha\in \R$ is independent of $k$. For each $\alpha\in\R$, there also exists a discrete set
of numbers $K_j(\alpha)>0$ such that the boundary value problem (BVP) admits non-trivial solutions
with $k^2=K_j(\alpha)$ for each $j=1,2,\ldots$ (see Remark \ref{rem0920} below).
By \cite[Chapter 7]{Kato}, the function $\alpha\rightarrow K_j(\alpha)$ is continuous and piecewise
analytic. Further, $K_j(\alpha)$ is not analytic at $\alpha=\alpha_0$ only if $k^2=K_j(\alpha_0)$
is not a simple eigenvalue.
Recall from \eqref{qp} with $L=2\pi$ that an $\alpha$-quasiperiodic function must also
be $(\alpha+j)$-quasiperiodic for any $j\in\N$.
It is easy to conclude that $K_j(\alpha):\R\rightarrow\R$ is periodic in $\alpha$ with
the periodicity one. Restricting to one periodic interval $[-1/2,1/2]$, we also have the even symmetry
$K_j(\alpha)=K_j(-\alpha)$ for all $\alpha\in[-1/2, 1/2]$.
The $\alpha$-dependent eigenvalues $K_j(\alpha)$ can be relabelled for $j\in\Z_+$ so as to make
the eigenvalues and associated eigenfunctions analytic in $\alpha\in\R$ (see, e.g.,
\cite[Theorem 3.9, Chapter 7]{Kato} or \cite[Section 3.3]{FJ16}).
For $j\in\Z_+$ the curves given by $K_j(\alpha):(-1/2,1/2]\rightarrow\R$ for the
relabelled indices are well known as dispersion relations, and the graphs of the dispersion
relations define the Bloch variety \cite{Kuchment}. Note that the dispersion curves are no
longer periodic. Below we characterize the relation between the function $\mu\mapsto k(\mu)$
and the dispersion relation $\alpha\mapsto K(\alpha)$.

\begin{lem}\label{lem4.1}
(i) The function $k_j(\mu):(-1,1)\rightarrow\R_+$ must fulfill the dispersion relation
$K_{j'}(\mu\, k_j(\mu))=k_j^2 (\mu)$ for some $j'\in \Z_+$.
Conversely, from the dispersion relation $K_{j'}(\mu k)=k^2$ one can always deduce the function
$k=k_j(\mu)$ for some $j\in \Z_+$.

(ii) If $k_j(\mu)\equiv {\rm Const}$ for some $j\in \Z_+$, then $K_{j'}(\alpha)\equiv{\rm Const}$
for some $j'\in \Z_+$ and vice versa.
\end{lem}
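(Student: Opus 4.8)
The plan is to exploit the two-parameter relationship between the $\mu$-eigenvalue problem (where the quasi-periodicity shift $\alpha = k\mu$ is tied to $k$) and the standard dispersion relation (where $\alpha$ is a free FB parameter independent of $k$). The key observation is that a pair $(\mu, k)$ with $\mu \in (-1,1)$, $k > 0$ solves (BVP) if and only if the pair $(\alpha, k) = (\mu k, k)$ solves the same PDE viewed with $\alpha$ as the free FB parameter. That is, $w$ is a nontrivial solution of (BVP) with parameter $\mu$ and wavenumber $k$ precisely when $k^2$ is one of the $\alpha$-dependent eigenvalues $K_{j'}(\alpha)$ at the particular point $\alpha = \mu k$. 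This is just a tautology at the level of the PDE — both formulations involve exactly the operator $\nabla_\alpha \cdot \nabla_\alpha + k^2$ on $H^1_{0,0}(D)$ — but it must be phrased carefully because the indexing conventions for $k_j(\mu)$ and $K_{j'}(\alpha)$ are independent.

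\textbf{Proof of (i).} First I would fix $\mu \in (-1,1)$ and a value $k = k_j(\mu)$ from some branch of the $\mu$-eigenvalue problem. By definition (BVP) has a nontrivial solution $w \in H^1_{0,0}(D)$ with this $k$ and with $\alpha = k_j(\mu)\,\mu$. But for the fixed value $\alpha_0 := \mu\, k_j(\mu)$, the set $\{K_{j'}(\alpha_0) : j' \in \Z_+\}$ exhausts all wavenumbers-squared for which $\nabla_{\alpha_0}\cdot\nabla_{\alpha_0} w + k^2 w = 0$ has a nontrivial $2\pi$-periodic solution vanishing on $\partial D$ — this is the content of the spectral decomposition of the self-adjoint operator $-\nabla_{\alpha_0}\cdot\nabla_{\alpha_0}$ on $H^1_{0,0}(D)$ (with compact resolvent because $D$ modulo the $x_1$-period is bounded in $x_2$; see Remark \ref{rem0920}). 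Hence $k_j^2(\mu) = K_{j'}(\alpha_0) = K_{j'}(\mu\, k_j(\mu))$ for some $j' \in \Z_+$, which is the asserted dispersion relation. The converse direction is symmetric: if $K_{j'}(\mu k) = k^2$ holds for some $k > 0$ and some $j'$, then by definition of $K_{j'}$ there is a nontrivial $w$ solving the PDE with FB parameter $\alpha = \mu k$ and wavenumber $k$; rewriting $\alpha = \mu k$ shows $w$ solves (BVP) with parameter $\mu$ and this $k$, so $k$ lies on some branch $k_j(\mu)$.

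\textbf{Proof of (ii).} Suppose $k_j(\mu) \equiv c$ (a positive constant) on $(-1,1)$ for some $j$. Then by part (i), for every $\mu$ there is an index $j'(\mu)$ with $K_{j'(\mu)}(\mu c) = c^2$. As $\mu$ ranges over $(-1,1)$, the argument $\mu c$ ranges over the open interval $(-c, c)$, which has nonempty interior; since each $K_{j'}$ is continuous and piecewise analytic (by \cite[Chapter 7]{Kato}) and there are only countably many branches, the set $\{\alpha \in (-c,c) : K_{j'}(\alpha) = c^2 \text{ for some } j'\}$ can equal all of $(-c,c)$ only if some single analytic branch $K_{j'}$ is identically equal to $c^2$ on a subinterval — and then, by analyticity and the relabelling that makes $K_{j'}$ analytic on all of $\R$, that branch is constant, $K_{j'}(\alpha) \equiv c^2$. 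Conversely, if some $K_{j'}(\alpha) \equiv c^2$, then for every $\mu \in (-1,1)$ the relation $K_{j'}(\mu c) = c^2$ holds trivially, so by the converse half of (i) the constant $c$ is a value $k_j(\mu)$ for every $\mu$; one then checks (using the analytic relabelling of the $k_j$ from \cite[Chapter 7]{Kato}, or simply the fact that a flat value persisting on all of $(-1,1)$ must be carried by one analytic branch) that $k_j(\mu) \equiv c$ for a single index $j$.

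\textbf{The main obstacle} I anticipate is bookkeeping with the two independent relabelling schemes: the branches $k_j(\mu)$ are organized to be analytic in $\mu$, the branches $K_{j'}(\alpha)$ to be analytic in $\alpha$, and the tautological correspondence at the PDE level does not respect either labelling, so one must argue at the level of the \emph{sets} of solutions and then invoke analyticity to pass from ``constant on a subinterval'' to ``globally constant branch.'' A secondary point needing care in (ii) is that covering the interval $(-c,c)$ by countably many analytic level sets forces one of them to be non-discrete, hence (being a zero set of an analytic function that is not identically zero, unless the branch is constant) all of $(-c,c)$ — this is where the piecewise-analyticity from Kato's perturbation theory is essential and where one should be explicit that ``piecewise analytic with at most countably many pieces'' plus ``the union is a full interval'' yields a constant branch.
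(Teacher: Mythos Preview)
Your proof is essentially correct but takes a genuinely different route from the paper, particularly for the converse in (i) and for (ii). For the converse in (i), the paper does more than your set-level tautology: it sets $F(k):=K_{j'}(\mu k)-k^2$ and shows that $F=0$ and $\partial_k F=\mu K_{j'}'(\mu k)-2k=0$ cannot hold simultaneously, since combining them yields $\alpha K_{j'}'(\alpha)-2K_{j'}(\alpha)=0$ at $\alpha=\mu k$, which (along the solution curve) would force $K_{j'}(\alpha)=c\alpha^2$, contradicting periodicity unless $K_{j'}\equiv 0$. This non-degeneracy lets the paper invoke the implicit function theorem to produce $k$ as a genuine function of $\mu$ and identify it with a single branch $k_j$. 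Your pointwise argument (``$k$ lies among the $k_j(\mu)$'s'') does not by itself give a single-valued function or a fixed index $j$. For part (ii), however, you compensate with an independent argument: if $k_j\equiv c$, the interval $(-c,c)$ is covered by the countably many level sets $\{\alpha:K_{j'}(\alpha)=c^2\}$, each of which is discrete unless the analytic branch $K_{j'}$ is constant, so some branch must be flat. The paper instead derives (ii) in one line from (i) (having already pinned down a fixed $j'$). Your approach trades the implicit-function-theorem computation for a Baire-type pigeonhole; the paper's route buys a slightly stronger (i), while yours gives a more self-contained proof of (ii), which is the statement actually used downstream via Proposition~\ref{lem-x2.3} and Corollary~\ref{lin-indep:cor1}. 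The bookkeeping obstacle you flag (passing from ``constant value hit at every $\mu$'' to ``a single analytic branch is constant'') is real in both directions of your (ii) and is exactly what the paper's fixed-$j'$ version of (i) sidesteps.
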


\begin{proof}
(i) The first part follows straightforwardly from the definitions of $k_j$ and $K_{j'}$. To prove
the second part, we set $F(k):=K(\mu k)-k^2$. Obviously, $\D F/ \D k=\mu K'(\mu k)-2 k$,
where $F'(\alpha):=\D F/\D \alpha$. If
\be\label{K}
K(\mu k)-k^2=0,\quad \mu K'(\mu k)-2k=0,
\en we can conclude that
\ben
\alpha\,K'(\alpha)-2K(\alpha)=0,\quad \alpha=\mu k.
\enn
Hence, $K(\alpha)=c\, \alpha^2$ for some constant $c\in \R$. By the 1-periodicity of $K$ we obtain $c=0$
and thus $K\equiv 0$. This further leads to $k=0$ and by integration by part, any solution to (BVP) must
vanish identically. Hence, the two relations in \eqref{K} cannot hold simultaneously. By the implicit
function theorem one can ways get the function $k=k_j(\mu)$ for some $j\in \Z_+$ from the dispersion
relation $K_{j'}(\alpha)=k^2$.

(ii) The second assertion is a direct consequence of the first assertion.
\end{proof}

\begin{rem}{\rm
We consider a special case when $D=\R\times(0,h)$ is a straight strip with some $h>0$.
By separation of variables, it was proved in  \cite{HF97} that the dispersion relation is given by
\be\label{dispersion}
K_{n,m}(\alpha) =(\alpha+n)^2+\left(\frac{m\pi}h\right)^2,\quad n\in\Z,m\in\Z_+,
\en
when $|\alpha|\!<\!k$ (see \cite[(3.5)]{HF97}).
By a same argument as in \cite{HF97}, (\ref{dispersion}) holds for all $\alpha\!\in\!\mathbb{R}$.
Here, the dispersion relation $\{K_{n,m}(\alpha)\}_{n\in\Z,m\in\Z_+}$ is the rearrangement of
$\{K_j(\alpha)\}_{j\in\Z_+}$ mentioned above.
}
\end{rem}

For a proof of Theorem \ref{TH-phase} in Case (ii), it suffices to prove that the $\mu$-eigenvalues
must be discrete for any fixed $k>0$. To this end, we need the following proposition.

\begin{prop}\label{lem-x2.3}
Suppose that $\Gamma^{(1)}$ and $\Gamma^{(2)}$ are both analytic curves or the graphs of $3$-times
continuously differentiable functions such that $\Gamma^{(1)}\cap \Gamma^{(2)}=\emptyset$.
Then the problem (BVP) has no flat dispersion curves, that is, $K_j(\alpha)\not\equiv\, {\rm const}$
for any $j\in\Z_+$.
\end{prop}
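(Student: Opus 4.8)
The plan is to argue by contradiction: suppose some dispersion curve is flat, say $K_{j'}(\alpha)\equiv c$ for all $\alpha\in\R$ and some constant $c>0$. By the relabelling result from \cite{Kato} cited above, the associated eigenfunctions can be chosen analytic in $\alpha$, so there is a family $w(\cdot;\alpha)\in H^1_{0,\alpha}(M)$ (where $M$ is one periodic cell of $D$, cf. Figure \ref{case2}) depending analytically on $\alpha\in\R$ and solving $\nabla_\alpha\cdot\nabla_\alpha w+c\,w=0$ in $D$ with $w=0$ on $\Gamma^{(1)}\cup\Gamma^{(2)}$, and with $w(\cdot;\alpha)\not\equiv0$ for each $\alpha$. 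Undoing the Floquet--Bloch conjugation, set $v(x;\alpha):=e^{i\alpha x_1}w(x;\alpha)$; then $v(\cdot;\alpha)$ is $\alpha$-quasiperiodic and solves the Helmholtz equation $(\Delta+c)v=0$ in $D$ with $v=0$ on $\partial D$. Flatness means this holds \emph{simultaneously for an interval of $\alpha$ values}, which is the structural anomaly I want to exploit. I would follow the strategy of \cite{SW2002} for excluding flat bands: differentiate the eigenvalue equation in $\alpha$ and combine the solutions at nearby $\alpha$ to produce, eventually, a compactly supported (in the $x_1$-direction) nontrivial solution of $(\Delta+c)u=0$ in the strip $D$ vanishing on $\partial D$.

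Concretely, the key steps are as follows. First, differentiate $\nabla_\alpha\cdot\nabla_\alpha w(\cdot;\alpha)+c\,w(\cdot;\alpha)=0$ with respect to $\alpha$; since $c$ is constant there is no term from the right-hand side, and one gets an inhomogeneous equation for $\partial_\alpha w$ with a source built from $w$ and $\partial_1 w$. Second, form suitable finite linear combinations $\sum_\ell a_\ell\, e^{i\alpha_\ell x_1} w(x;\alpha_\ell)$ over a finite set of shifts $\alpha_\ell$ (or, equivalently, take a contour integral $\oint e^{i\alpha x_1}w(x;\alpha)\,d\alpha$ over a loop in the complex $\alpha$-plane) to construct a nonzero solution $U$ of $(\Delta+c)U=0$ in $D$, $U=0$ on $\partial D$, that decays as $|x_1|\to\infty$; this is precisely where the flatness (analyticity of $w(\cdot;\alpha)$ for \emph{all} $\alpha$, plus the uniform boundedness of the family on each cell) is used, following the Wronskian/resolvent argument of \cite{SW2002}. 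Third, invoke a unique continuation / Rellich-type argument in the periodic waveguide: a solution of the Helmholtz equation in $D$ that vanishes on the two boundary curves and decays in $x_1$ must vanish identically. This last point is where the regularity hypotheses on $\Gamma^{(1)},\Gamma^{(2)}$ (analytic, or $C^3$ graphs) and the separation $\Gamma^{(1)}\cap\Gamma^{(2)}=\emptyset$ enter: they guarantee enough boundary regularity to run unique continuation up to $\partial D$ and to make the decay-implies-trivial statement rigorous (for graph curves one can compare with the explicit separated-variable modes on an enclosing straight strip and use the exponential gap between eigenvalues). The resulting contradiction $U\equiv0$ forces $w(\cdot;\alpha)\equiv0$, contradicting that it is an eigenfunction, so no flat curve exists.

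I expect the main obstacle to be step two, the construction of a nontrivial decaying solution out of the flat-band family: one must verify that the linear combination (or contour integral) does not vanish identically, which requires controlling the $\alpha$-dependence of $w$ carefully — in particular ruling out that $w(x;\alpha)=e^{-i\alpha x_1}\varphi(x)$ for a \emph{single} fixed $\varphi$ independent of $\alpha$ (which would make every combination collapse). Here the non-straightness of at least one of $\Gamma^{(1)},\Gamma^{(2)}$ should be decisive: if $w(\cdot;\alpha)$ had that degenerate form, then $\varphi$ would be an ordinary (non-quasiperiodic-dependent) Helmholtz solution vanishing on both curves, and one reduces to the situation already excluded — effectively a bounded-domain Dirichlet eigenvalue argument as in Case (i), or, if both curves are straight lines, the problem is explicitly solvable via \eqref{dispersion} and one checks directly that $K_{n,m}(\alpha)=(\alpha+n)^2+(m\pi/h)^2$ is never constant. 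A secondary technical point is justifying differentiation in $\alpha$ and the analytic continuation of $w(\cdot;\alpha)$ into the complex plane with the right uniform bounds on each periodic cell; this is standard perturbation theory of \cite{Kato} once one has set up the problem on the fixed space $H^1_{0,0}(M)$ via the conjugation $w\mapsto e^{-i\alpha x_1}w$, but it must be stated carefully because the domain $D$ is unbounded.
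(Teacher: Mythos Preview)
The paper does not give its own proof of this proposition; it simply cites \cite[Theorem~2.3]{SW2002}, noting that the periodic strips in question are $3$-admissible in the sense of \cite[Definition~2.2]{SW2002}. The mechanism behind the Sobolev--Walthoe result is the Thomas argument, which the paper alludes to in the paragraph following Corollary~\ref{lin-indep:cor1}: one shows that for $\alpha\in\C$ with ${\rm Im}\,\alpha$ sufficiently large the sesquilinear form associated with (BVP) is strictly coercive (this is where the $C^3$/analytic regularity enters, via a periodic change of variables flattening $D$ to a straight strip), so the operator has trivial kernel there; analytic Fredholm theory then forces the set of $\alpha\in\C$ at which a fixed $c>0$ is an eigenvalue to be discrete, ruling out an entire real interval and hence any flat band.

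Your outline takes a different route and, as written, contains a genuine circularity in step three. The assertion that a solution of $(\Delta+c)U=0$ in $D$ with $U=0$ on $\partial D$ and $U\to 0$ as $|x_1|\to\infty$ must vanish identically is, via the Floquet--Bloch decomposition, equivalent to the absence of $L^2(D)$-eigenfunctions for the Dirichlet Laplacian on the periodic waveguide $D$; and that, by the same decomposition, is equivalent to the absence of flat bands --- precisely the statement you are trying to prove. Unique continuation governs local vanishing, not decay at infinity in an unbounded domain, so it does not furnish an independent proof of this step; nor does your suggested comparison with separated-variable modes on an enclosing straight strip, since the Dirichlet condition lives on $\partial D$ rather than on the straight boundary. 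The Thomas/SW2002 approach sidesteps this trap entirely by pushing $\alpha$ into the complex plane and exploiting coercivity there, instead of trying to build and then exclude a real decaying solution.
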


The result of Proposition \ref{lem-x2.3} was essentially contained in the proof of
\cite[Theorem 2.3]{SW2002} for general periodic partial differential equations in an open or
closed waveguide. In a closed waveguide, both the Dirichlet and Neumann boundary conditions were
considered there. Moreover, Proposition \ref{lem-x2.3} applies to general 3-admissible periodic
domains (see \cite[Definition 2.2]{SW2002}) which can be obtained from a straight strip by a
periodic $W^{3,\infty}$-mapping/a $3$-admissible mapping, including the periodic strips stated
in Proposition \ref{lem-x2.3}.
As a direct consequence of Proposition \ref{lem-x2.3}, we have the following result.

\begin{cor}\label{lin-indep:cor1}
Let $k>0$ be an arbitrarily fixed wave number.
Under the conditions of Proposition \ref{lem-x2.3}, there exists at least one parameter $\mu\in(-1,1)$
such that the periodic boundary value problem (BVP) admits the trivial solution only.
\end{cor}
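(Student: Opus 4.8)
The plan is to translate the statement into one about the level sets of the dispersion curves $K_j$ and then apply a cardinality argument. Fix $k>0$. By the construction of the numbers $K_j(\alpha)$ recalled above, for a given $\alpha\in\R$ the problem (BVP) possesses a nontrivial solution in $H^1_{0,0}(D)$ if and only if $k^2=K_j(\alpha)$ for some $j\in\Z_+$. Setting $\alpha=\mu k$, the set of $\mu$-eigenvalues for this fixed $k$ is therefore exactly
\[
\mathcal E_k=\bigcup_{j\in\Z_+}\big\{\mu\in(-1,1):K_j(\mu k)=k^2\big\},
\]
which, by Lemma \ref{lem4.1}(i), coincides with the set of $\mu\in(-1,1)$ for which $k=k_j(\mu)$ for some $j\in\Z_+$. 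It thus suffices to prove that $\mathcal E_k$ is a proper subset of $(-1,1)$.

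To this end I would invoke Proposition \ref{lem-x2.3}: under the hypotheses of the corollary none of the relabelled dispersion curves is flat, so each $K_j:\R\to\R$ is real-analytic and not identically constant. Hence, for each $j\in\Z_+$, the function $\alpha\mapsto K_j(\alpha)-k^2$ is real-analytic and not identically zero, and by the identity theorem its zero set has no accumulation point in $\R$; in particular, its intersection with the bounded interval $(-k,k)$ is finite. Pulling this back through the linear substitution $\mu\mapsto\mu k$, each of the sets appearing in the union defining $\mathcal E_k$ is finite, so $\mathcal E_k$ is a countable union of finite sets and is therefore countable. Since $(-1,1)$ is uncountable, we may pick $\mu_\ast\in(-1,1)\setminus\mathcal E_k$, and for this parameter (BVP) admits only the trivial solution, which is the assertion.

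The only step requiring care is the global real-analyticity of the branches $K_j$ over all of $\R$, as opposed to mere piecewise analyticity, since that is precisely what makes each level set locally finite. This is delivered by the Kato relabelling recalled before Lemma \ref{lem4.1} together with the non-degeneracy that no $K_j$ is constant, provided by Proposition \ref{lem-x2.3}; alternatively one could observe that for each fixed $\alpha$ only finitely many $j$ satisfy $K_j(\alpha)=k^2$, because by \cite[Theorem 2.3]{HF97} the $k$-eigenvalues accumulate only at infinity, but this refinement is not needed for the counting argument. Every other step is elementary.
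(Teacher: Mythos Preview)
Your proof is correct and takes a route different from the paper's. The paper argues by contradiction: if every $\mu\in(-1,1)$ were a $\mu$-eigenvalue, it asserts that $k_j(\mu)\equiv k$ for some fixed $j$ and then applies Lemma \ref{lem4.1}(ii) to produce a flat dispersion curve $K_{j'}\equiv k^2$, contradicting Proposition \ref{lem-x2.3}. You instead bypass Lemma \ref{lem4.1} entirely and work with the analytic branches $K_j$ directly: since each is non-constant (Proposition \ref{lem-x2.3}) and real-analytic on $\R$ (the Kato relabelling recalled before Lemma \ref{lem4.1}), the identity theorem forces each level set $\{K_j=k^2\}$ to be locally finite, whence $\mathcal E_k$ is countable and cannot exhaust $(-1,1)$. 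Your argument yields the stronger conclusion that the set of $\mu$-eigenvalues is countable (indeed locally finite), and it sidesteps the passage in the paper's proof from ``for each $\mu$ there is some $j$ with $k_j(\mu)=k$'' to ``there is a single $j$ with $k_j\equiv k$ on $(-1,1)$,'' a step that itself implicitly relies on the same analyticity you exploit directly.
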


\begin{proof}
Assume to the contrary that, for some $k>0$ the periodic boundary value problem (BVP) admits
nontrivial solutions for each $\mu\in(-1, 1)$.
This implies that $k_j(\mu)=k>0$ for all $\mu\in(-1,1)$ and for some $j\in\Z_+$.
By Lemma \ref{lem4.1} (ii), there exists one flat dispersion curve $K_{j'}(\alpha)\equiv k^2$ for
some $j'\in\Z_+$ for the system (BVP),
which contradicts Proposition \ref{lem-x2.3}.
\end{proof}

If $\alpha\in\C$ and $\rm{Im}\,\alpha>0$ sufficiently large,
the strict coercivity of the sesquilinear form corresponding to (BVP) was justified in the proof
of \cite[Theorem 3.4]{SW2002} contained in \cite[Section 5]{SW2002}. The proof was based on a
suitable change of variables which reduces the $\alpha$-eigenvalue problem over 3-admissible
periodic domains to an equivalent problem over straight strips.
This together with the perturbation theory (see e.g., \cite[Chapter 7, Theorems 7.1.10, 7.1.9]{Kato}
or \cite[Chapter 8, Theorem 86]{RS1978}) and Lemma \ref{lem4.1} also implies Corollary \ref{lin-indep:cor1}.
Now, we state the discreteness of the $\mu$-eigenvalues for any fixed $k>0$ and complete the proof
of Theorem \ref{TH-phase} in Case (ii).

\begin{lem}\label{lem-x4}
Under the conditions of Proposition \ref{lem-x2.3}, the $\mu$-eigenvalues of (BVP) form at most
a discrete set in $(-1,1)$ without any accumulating point on the real axis.
\end{lem}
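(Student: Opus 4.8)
The plan is to recast (BVP), with $\alpha=k\mu$ and $k>0$ fixed, as an analytic family (in $\mu\in\C$) of Fredholm operators of index zero on one periodic cell, and then to invoke the analytic Fredholm theory, using Corollary \ref{lin-indep:cor1} to exclude the degenerate alternative.

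First I would reduce (BVP) to a single periodic cell $M$ of the strip $D$ (cf.\ Figure \ref{case2}): on $M$ one imposes $2\pi$-periodicity on the two lateral artificial boundaries and the Dirichlet condition on the parts of $\Gamma^{(1)}\cup\Gamma^{(2)}$ bounding $M$, and denotes by $X$ the corresponding closed subspace of $H^1(M)$, i.e.\ the restriction of $H^1_{0,0}(D)$ to $M$. If $\Gamma^{(1)}$ is not a graph one first applies the $3$-admissible change of variables of \cite[Section 5]{SW2002}, after which $M$ is a bounded Lipschitz domain and, in particular, the embedding $X\hookrightarrow L^2(M)$ is compact. The weak form of (BVP) is governed by the sesquilinear form
\[
a_\mu(w,v)=\int_M\Big[\nabla w\cdot\nabla\overline v+ik\mu\,\big(w\,\partial_1\overline v-\overline v\,\partial_1 w\big)+\big(k^2\mu^2-k^2\big)\,w\overline v\Big]\,dx,
\]
which is a quadratic polynomial in $\mu$ and hence defines, for every $\mu\in\C$, a bounded sesquilinear form on $X$ depending analytically on $\mu$. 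Writing $a_\mu=a_0+b_\mu$ with $a_0(w,v):=\int_M\big(\nabla w\cdot\nabla\overline v+w\overline v\big)\,dx$ the $H^1$-inner product (coercive on $X$) and $b_\mu$ the remaining terms, the Riesz representation theorem yields bounded operators with $a_\mu\leftrightarrow I+C_\mu$. Every summand of $b_\mu$ contains at least one undifferentiated factor, so the compactness of $X\hookrightarrow L^2(M)$ shows that $C_\mu$ is compact for each $\mu$ and that $\mu\mapsto C_\mu$ is an entire, compact-operator-valued function. Therefore $\mu\mapsto I+C_\mu$ is an analytic family of Fredholm operators of index zero on $\C$.

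Now I would apply the analytic Fredholm theorem (see, e.g., \cite[Theorem 8.26]{CK19}): either $I+C_\mu$ is not invertible for any $\mu\in\C$, or the set $S:=\{\mu\in\C:\,I+C_\mu\ \text{is not invertible}\}$ is discrete with no accumulation point in $\C$. The first alternative is ruled out by Corollary \ref{lin-indep:cor1}, which provides a $\mu_0\in(-1,1)$ for which (BVP) has only the trivial solution; thus $I+C_{\mu_0}$ is injective and, being Fredholm of index zero, invertible. (Alternatively, the strict coercivity of $a_\mu$ for $\mu$ with sufficiently large imaginary part, established in \cite[Section 5]{SW2002}, supplies such a point.) Consequently $S$ is discrete in $\C$ and has no finite accumulation point. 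Since (BVP) admits a nontrivial solution precisely when $\mu\in S$, the $\mu$-eigenvalues coincide with $S\cap(-1,1)$, and hence form at most a discrete subset of $(-1,1)$ with no accumulation point on the real axis, which is the claim.

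The steps that carry real content are (i) the reduction to the cell $M$ --- ensuring that $M$ is regular enough for the compact embedding and that the cell problem is genuinely equivalent to (BVP) even when $\Gamma^{(1)}$ is not a graph, which is exactly what the $3$-admissible framework of \cite{SW2002} delivers --- and (ii) the exclusion of the ``$I+C_\mu$ nowhere invertible'' alternative, which is precisely the content of Corollary \ref{lin-indep:cor1} (equivalently, of Proposition \ref{lem-x2.3} via Lemma \ref{lem4.1}). The analyticity, compactness, and Fredholm bookkeeping are otherwise routine.
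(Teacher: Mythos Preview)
Your proposal is correct and follows essentially the same route as the paper: reduce (BVP) to a variational problem on one periodic cell $M$, represent the sesquilinear form (a quadratic polynomial in $\mu$) via Riesz as $I+$(compact, analytic in $\mu$), and apply the analytic Fredholm theorem together with Corollary~\ref{lin-indep:cor1} to exclude the degenerate alternative. The only cosmetic differences are your choice of the full $H^1$-inner product versus the paper's gradient inner product (equivalent by Poincar\'e), and your explicit invocation of the $3$-admissible change of variables from \cite{SW2002} to secure the compact embedding when $\Gamma^{(1)}$ is not a graph, a point the paper leaves implicit.
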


\begin{proof}
We carry out the proof following the ideas in the proof of \cite[Theorem 2.3]{HF97},
where the $k$-eigenvalue problem was investigated when $\mu$ is fixed.
Let $w$ be a solution to the problem (BVP).
Let $M:=\{x\in D:0<x_1<2\pi\}$ be one $2\pi$-periodic cell (see Figure \ref{case2} for the geometry of $M$)
and let $H$ be the completion of $\{\varphi\in C^1_p(\ov{M}):\varphi=0\;\textrm{on}\;\pa D\cap\ov{M}\}$
with respect to $H^1$-norm, where $C^1_p$ denotes the space of differentiable functions which are
$2\pi$-periodic with respect to $x_1$.
Note that $M$ may be disconnected. Then we can apply Green's theorem to obtain that
for any function $\psi\in H$,
\be\label{lin-indep:eq7}
\int_M \nabla w\cdot\nabla\ov{\psi}dx + \mu\int_M\left(-2ik\pa_1 w\ov{\psi}\right)dx
+(\mu^2-1)\int_M k^2 w\ov{\psi}dx=0.
\en
Let $\langle\cdot,\cdot\rangle_H$ denote the inner product of the Hilbert space $H$, which is given by
\ben
\langle \varphi,\psi\rangle_H:=\int_M\nabla \varphi\cdot\nabla\ov{\psi} dx,\quad \varphi,\psi\in H.
\enn
By Poincare's inequality, it is known that $\langle\cdot,\cdot\rangle_H$ is equivalent to the
ordinary inner product in $H^1(M)$.
Then with the aid of Riesz' representation theorem,
there exist $B,C\in \mathcal{L}(H)$ such that
\ben
\int_M\left(-2ik\pa_1 \varphi\ov{\psi}\right)dx&=&\langle B\varphi,\psi\rangle_H,\quad\varphi,\psi\in H,\\
\int_M k^2 \varphi\ov{\psi}dx&=&\langle C\varphi,\psi\rangle_H,\quad\varphi,\psi\in H,
\enn
where $\mathcal{L}(H)$ denotes the space of bounded linear operators
from $H$ into itself. Thus the formula (\ref{lin-indep:eq7}) is equivalent to the operator equation
\be\label{lin-indep:eq8}
w+\mu Bw+(\mu^2-1) Cw=0,\quad w\in H.
\en
Further, it is easily verified that $B$ and $C$ are compact operators in $\mathcal{L}(H)$.
On the other hand, let $A:\mathbb{C}\rightarrow\mathcal{L}(H)$ be an operator valued function
given by $A(\mu):=\mu B+(\mu^2-1)C$.
Then it is obvious that $A(\mu)$ is analytic in $\mathbb{C}$ and compact for each $\mu\in\mathbb{C}$.
Thus we can apply Corollary \ref{lin-indep:cor1} and the analytic Fredholm theory
(see, e.g., \cite[Theorem 8.26]{CK19}) to obtain that $(I+A(\mu))^{-1}$ exists for all $\mu\in\mathbb{C}\ba S$
where $S$ is a discrete subset of $\mathbb{C}$ with the only accumulating point at infinity.
This together with the equivalence of the problem (BVP) with the equation (\ref{lin-indep:eq8})
implies the statement of this lemma.
\end{proof}

Recall from (\ref{PBVP}) that $\sin\theta_n$ are $\mu$-eigenvalues to (BVP) for all $n\!\in\!\Z_+$. Since $\theta_n\!\in\!(-\pi/2,\pi/2)$ are distinct angles,  these $\mu$-eigenvalues must have a finite accumulating
point on the real-axis, which contradicts to Lemma \ref{lem-x4}. This implies that Case (ii) does not hold.

Finally, the relation $\G^{(1)}=\G^{(2)}$ follows by combining Case (i) and Case (ii).
This finishes the proof of Theorem \ref{TH-phase}.

We end up this section by two remarks.

\begin{rem}\label{rem0920}{\rm
By setting $u=we^{i\alpha x_1}$ with $\alpha\in \R$, the periodic boundary value problem (\ref{PBVP})
can be rewritten as
\ben
\left\{\begin{array}{lll}
\Delta u+k^2u=0\quad\mbox{in }D,\\
u=0\quad\mbox{on}\quad\pa D,\\
\mbox{$e^{-i\alpha x_1}u$ is $2\pi$-periodic with respect to $x_1$ in $D$.}
\end{array}\right.
\enn
Multiplying $\ov{u}$ on both sides of the equation and integrating over $M$, we deduce from the
quasi-periodicity of $u$ that
\ben
0=\int_{M}\left(|\nabla u|^2-k^2|u|^2\right)dx.
\enn
By Poincar\'e's inequality (see \cite[Lemma 3.13]{Monk}), it follows from the Dirichlet boundary condition
of $u$ on $\pa D\!\cap\!\ov{M}$ that $0\geq(C-k^2)\|u\|^2_{L^2(M)}$ for a constant $C>0$.
Hence, $w=e^{-i\alpha x_1}u=0$ provided $k>0$ is small enough.
Proceeding as in the proof of Lemma \ref{lem-x4}, we can conclude from the analytic Fredholm theory
(see, e.g., \cite[Theorem 8.26]{CK19}) that, for any $\alpha\in\R$, (\ref{PBVP}) admits only the trivial
solution for all $k^2\in\mathbb{C}\ba E(\alpha)$ where $E(\alpha)$ is a discrete subset of $\mathbb{C}$.
Therefore, the eigenvalues $\{K_j(\alpha)\}_{j\geq 1}$ are contained in $E(\alpha)$ and thus accumulate
only at infinity. Moreover, the associated eigenspace for each  eigenvalue $K_j(\alpha)$ is of finite
dimensions due to the compactness of corresponding operators.
}
\end{rem}

\begin{rem}{\rm
In \cite[Theorem 5.1]{CK19}, it was proved that a sound-soft scatterer can be uniquely determined by
the far-field patterns from
a finite number of incident plane waves with a fixed wave number, under the assumption that the scatterer
is contained in a ball. We note that it is interesting to extend this result to the case of periodic curves.
This may require a further investigation of properties of the $\mu$-eigenvalues with respect to domains
and is thus beyond the scope of this paper.
For analogous results with finitely many wave numbers and a fixed incident angle,
we refer to \cite[Theorem 3.2]{HF97}.
}
\end{rem}

\section{Uniqueness with phaseless data}\label{sec5}
\setcounter{equation}{0}

In contrast to the inverse problem with phase information, this section is devoted to uniqueness for
recovering the periodic curve from phaseless near-field data (i.e., Theorem \ref{TH-main} with phaseless data).
We rephrase Theorem \ref{TH-main} with phaseless data as follows.

\begin{thm}\label{TH-phaseless}
Let $\G^{(1)}$ and $\G^{(2)}$ be two periodic curves with Dirichlet boundary conditions.
Assume both of them satisfy Condition (i) or both of them satisfy Condition (ii).
Suppose that the periods of $\G^{(1)}$ and $\G^{(2)}$ are unknown.
If the corresponding phaseless total fields satisfy
\be\label{eq14}
|u_1(x;\theta_n)|=|u_2(x;\theta_n)|,\quad x\in \mathcal{D},\;n\in\Z_+,
\en
where $\{\theta_n\}_{n=1}^\infty$ are distinct incident angles in $(-\pi/2,\pi/2)$, then $\G^{(1)}\!=\!\G^{(2)}$.
Here, $\mathcal D\!\subset\!\Omega$ is a bounded domain.
\end{thm}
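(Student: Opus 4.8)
The plan is to turn the phaseless identity (\ref{eq14}) into a phased one at the cost of discarding only finitely many incident angles, and then apply Theorem \ref{TH-phase}. First, every $u_j(\cdot;\theta_n)$ solves the Helmholtz equation and is hence real-analytic, so $|u_j(\cdot;\theta_n)|^2 = u_j(\cdot;\theta_n)\,\overline{u_j(\cdot;\theta_n)}$ is real-analytic on $\Omega_j$. Taking (as in the proof of Theorem \ref{TH-phase}) $\mathcal D$ to lie in the unbounded component $\Omega'$ of $\Omega_1\cap\Omega_2$ that is connected to $U_h$, analytic continuation upgrades (\ref{eq14}) to $|u_1(\cdot;\theta_n)|^2=|u_2(\cdot;\theta_n)|^2$ on all of $\Omega'$, in particular on $U_h$ and on $\Gamma_h$, for every $n\in\Z_+$.

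\emph{Phase retrieval from the Rayleigh expansion.} On $U_h$ I write the Rayleigh expansions in the form $u_j(x;\theta_n)=\sum_{m\in\Z\cup\{\imath\}}A_m^{(j)}(\theta_n)\,e^{i\alpha_m^{(j)}x_1+i\beta_m^{(j)}x_2}$ (cf. (\ref{Rayleigh-up})--(\ref{ui})), with $A_\imath=1$, $\alpha_\imath=\alpha_0=k\sin\theta_n$, $\beta_\imath=-\beta_0=-k\cos\theta_n$, and expand $|u_j(x;\theta_n)|^2$ as the associated doubly indexed series. The key structural fact is that $|u^i(\cdot;\theta_n)|\equiv1$: the incident self-term then contributes the same constant $1$ to $|u_1|^2$ and to $|u_2|^2$, while the incident--specular cross-term equals $2\,\mathrm{Re}\!\big(A_0^{(j)}(\theta_n)e^{2i\beta_0x_2}\big)$, which is independent of $x_1$ and carries the pure, nonzero $x_2$-frequency $2\beta_0=2k\cos\theta_n$. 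Averaging the identity $|u_1|^2-|u_2|^2=0$ first over $x_1$ in the Cesàro sense --- which kills every term whose $x_1$-frequency $\alpha_m^{(j)}-\alpha_l^{(j)}$ is nonzero, just as in the proofs of Lemma \ref{lem-x2} and Theorem \ref{TH-phase-period} --- and then over $x_2$ against $e^{-2i\beta_0x_2}$ (Lemma \ref{lem-x}(iv) discards the leftover constants and the exponentially decaying evanescent contributions), I expect to read off $A_0^{(1)}(\theta_n)=A_0^{(2)}(\theta_n)$ with no angle excluded. Running the same extraction for the remaining $x_1$-frequencies --- splitting, as in Theorem \ref{TH-phase-period}, into the case where $L_1/L_2$ is rational (pass to a common period and use genuine $x_1$-Fourier series) and the case where it is irrational --- should give $A_m^{(1)}(\theta_n)=A_m^{(2)}(\theta_n)$ for all $m\in\Z\cup\{\imath\}$, hence $u_1(\cdot;\theta_n)=u_2(\cdot;\theta_n)$ on $U_h$ and, by analyticity, on $\Gamma_h$. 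Isolating $A_m^{(j)}(\theta_n)$ this way needs its extraction frequency $\beta_m^{(j)}(\theta_n)+\beta_0(\theta_n)$ to avoid the finitely many ``propagating-mode difference'' frequencies $\beta_p^{(j)}(\theta_n)-\beta_q^{(j)}(\theta_n)$, and needs no Rayleigh anomaly $\beta_m^{(j)}(\theta_n)=0$ to occur; since $\theta\mapsto\beta_m^{(j)}(\theta)$ is real-analytic and only finitely many such conditions are involved, they rule out only a finite set $\mathcal E\subset(-\pi/2,\pi/2)$ of incident angles (depending on $k$, $L_1$, $L_2$).

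\emph{Conclusion.} Removing the finitely many $\theta_n\in\mathcal E$ leaves infinitely many distinct incident angles for which $u_1(\cdot;\theta_n)=u_2(\cdot;\theta_n)$ on $\Gamma_h$, hence for which the hypothesis (\ref{x-1-}) of Theorem \ref{TH-phase} holds on any segment $\mathcal S\subset\Gamma_h$. Since that theorem only requires an infinite family of distinct directions, it yields $\Gamma^{(1)}=\Gamma^{(2)}$.

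\emph{Main obstacle.} The heart of the matter is the phase-retrieval step: one has to separate cleanly, inside $|u_1|^2-|u_2|^2$, the finitely many non-decaying oscillatory terms built from the propagating modes and the incident wave from the exponentially decaying evanescent terms, justify the term-by-term Cesàro averaging (via Lemma \ref{lem-x}), and check that the frequency used to read off each Rayleigh coefficient is non-resonant --- the resonant (and the Rayleigh-anomalous) angles being precisely the exceptional set $\mathcal E$. One should also keep in mind the complex-conjugate ambiguity inherent to any $|f|=|g|$ problem (formally, $u^s_1=(u^i)^2\overline{u^s_2}$); the mode-by-mode extraction above avoids it, because complex conjugation sends upward propagating/evanescent modes to downward ones, which is incompatible with the Rayleigh radiation condition satisfied by $u^s_1$. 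A minor point is ensuring that $\mathcal D$ genuinely communicates with $U_h$, so that the analytic continuation in the first step is legitimate.
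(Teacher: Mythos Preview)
Your overall architecture is exactly the paper's: extend $|u_1|^2=|u_2|^2$ from $\mathcal D$ to $U_h$ by real-analyticity of $|u_j|^2$, perform a phase retrieval on $U_h$ via the Rayleigh expansion and Ces\`aro averaging to recover $u_1=u_2$ for all but finitely many angles, then invoke Theorem~\ref{TH-phase}. The paper isolates the phase-retrieval step as a separate result (Theorem~\ref{TH-phaseless-phase}); its proof averages in the order $x_2$ first, then $x_1$, the reverse of yours, though the coefficient-matching condition one ends up with is the same either way.

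The genuine gap is your identification of the exceptional set $\mathcal E$. Your two stated conditions---no Rayleigh anomaly and non-resonance of $\beta_m+\beta_0$ with the propagating-mode differences $\beta_p-\beta_q$---do not exhaust the possible coincidences: you must extract $A_m^{(j)}$ for \emph{every} $m\in\Z$, and for each such $m$ the bad set is $\{(p,q):(\alpha_p^{(j)}-\alpha_q^{(j)},\,\beta_p^{(j)}-\overline{\beta_q^{(j)}})=(\alpha_m-\alpha_\imath,\,\beta_m-\beta_\imath)\}$ with $p,q$ ranging over \emph{all} modes, evanescent as well as propagating. Asserting that ``only finitely many such conditions are involved'' is precisely what needs proof. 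The paper's key observation is that the single arithmetic condition $\alpha L_j/\pi\notin\Z$ (equivalently $\alpha_n\neq-\alpha_\imath$ for all $n$, Lemma~\ref{l2}) suffices: Lemma~\ref{l-n} then shows, by a short case analysis on the circle $\alpha^2+\beta^2=k^2$, that the bad set above collapses to $\{(m,\imath)\}$ (or its $L_2$-analogue) for every $m$ simultaneously. There is no need to split into rational versus irrational $L_1/L_2$. A second point you gloss over is the extraction of the evanescent coefficients: multiplying by $e^{-i(\beta_m-\beta_\imath)x_2}$ for evanescent $m$ introduces an exponentially growing factor, so term-by-term averaging of the doubly infinite series must be justified; the paper does this (Step~2 of the proof of Theorem~\ref{TH-phaseless-phase}) by first matching all propagating coefficients, subtracting off the resulting identity, and then peeling off the evanescent ones inductively in order of increasing $|\beta_m|$.
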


To prove Theorem \ref{TH-phaseless}, we will apply Rayleigh expansion (\ref{Rayleigh-up}) to show
that the phaseless near-field data corresponding to one incident plane wave uniquely determine
the total field with phase information except for a finite set of incident angles.

\begin{thm}[Phase retrieval]\label{TH-phaseless-phase}
Let $\Gamma^{(1)}$ and $\Gamma^{(2)}$ be two periodic curves satisfying the conditions in
Theorem \ref{TH-phaseless}. Assume the periods of $\Gamma^{(1)}$ and $\Gamma^{(2)}$ are $L_1>0$
and $L_2>0$, respectively.
Let $u_j(x;\theta)$ $(j=1,2)$ be the total field for the incident plane wave defined by (\ref{plane-wave})
corresponding to the periodic curve $\Gamma^{(j)}$ and let $\theta\in(-\pi/2,\pi/2)$ satisfies
$k\sin\theta L_j/\pi\notin\Z$ (i.e. $\alpha L_j/\pi\notin\Z$) for $j=1,2$.
Suppose the corresponding total fields satisfy
\be\label{z-3}
|u_1(x;\theta)|=|u_2(x;\theta)|,\quad x\in U_h,
\en
for some $h\!>\!\max\{x_2\!:x\!\in\!\G^{(1)}\!\cup\!\G^{(2)}\}$.
Then $u_1(x;\theta)\!=\!u_2(x;\theta)$, $x\!\in\!U_h$.
\end{thm}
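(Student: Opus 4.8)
The plan is to substitute the Rayleigh expansions of $u_1$ and $u_2$ in $U_h$ into the modulus identity (\ref{z-3}) and then recover the Rayleigh coefficients $A^{(1)}_n$ and $A^{(2)}_n$ one at a time. Writing $|u_j|^2=u_j\overline{u_j}$ and using (\ref{211108-7}), for $x\in U_h$ I would expand
\[
|u_j(x)|^2=\sum_{p,q\in\Z\cup\{\imath\}}A^{(j)}_p\overline{A^{(j)}_q}\,e^{i(\alpha^{(j)}_p-\alpha^{(j)}_q)x_1}\,e^{i(\beta^{(j)}_p-\overline{\beta^{(j)}_q})x_2},
\]
the rearrangement of this double series being legitimate by the absolute and uniform convergence in $U_h$ supplied by Lemma \ref{lem-x}(ii). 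Two structural facts drive the argument: the $x_1$-frequencies $\alpha^{(j)}_p-\alpha^{(j)}_q$ are real and lie in $(2\pi/L_j)\Z$, while the factor $e^{i(\beta^{(j)}_p-\overline{\beta^{(j)}_q})x_2}$ stays bounded (indeed purely oscillatory) as $x_2\to+\infty$ precisely when both $p$ and $q$ are propagating indices, and decays exponentially otherwise. Thus $|u_j|^2$ is a finite quasi-periodic ``propagating part'' plus a tail tending to zero uniformly in $x_1$.

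Next I would peel off the characters $e^{i\xi x_1+\zeta x_2}$ appearing in $|u_1|^2-|u_2|^2=0$ in increasing order of their decay rate $-{\rm Re}\,\zeta\ge 0$, from the slowest to the fastest. At the first stage, subtracting the two bounded (propagating$\times$propagating) parts and letting $x_2\to+\infty$ shows that a finite quasi-periodic trigonometric sum tends to zero, hence vanishes identically, so the propagating parts of $|u_1|^2$ and $|u_2|^2$ coincide on $U_h$. At a general stage, assuming the coefficients of all characters with strictly smaller decay rate already agree, I would multiply the identity by $e^{\rho x_2}$ with $\rho$ the current rate and send $x_2\to+\infty$: the faster tails disappear (their control, uniform in $x_1$, coming from Lemma \ref{lem-x}(iii)), leaving a finite oscillatory sum that must vanish, whence the coefficients at rate $\rho$ match too (the required $x_1$- and $x_2$-averages being furnished by Lemma \ref{lem-x}(iv)). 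Iterating, the coefficient of every character in $|u_1|^2$ equals that in $|u_2|^2$.

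It remains to turn this into $A^{(1)}_n=A^{(2)}_n$ for every $n\in\Z\cup\{\imath\}$, and this is exactly where the hypothesis $\alpha L_j/\pi\notin\Z$ is used: it is equivalent to $\alpha^{(j)}_p\neq-\alpha^{(j)}_q$ for all $p,q\in\Z$ (in particular $\alpha\neq 0$), which forces the $\beta^{(j)}_n$ $(n\in\Z)$ to be pairwise distinct and different from $\pm\beta^{(j)}_0$ when $n\neq 0$; combined with the strict concavity of $n\mapsto\beta^{(j)}_n$ along the dispersion circle $(\alpha^{(j)}_n)^2+(\beta^{(j)}_n)^2=k^2$, this lets one single out, for each index $n$, the character produced by the pair $(n,\imath)$ — the one with $x_1$-frequency $\alpha^{(j)}_n-\alpha$ and $x_2$-exponent $i(\beta^{(j)}_n+\beta^{(j)}_0)$ — whose coefficient in $|u_j|^2$ is precisely $A^{(j)}_n$. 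Equating coefficients yields $A^{(1)}_n=A^{(2)}_n$ (or $A^{(1)}_n=A^{(2)}_{n'}$ for the index with matching quasi-momentum when $L_1\neq L_2$, and $A^{(1)}_n=0$ when no such $n'$ exists), hence $u_1(x;\theta)=u_2(x;\theta)$ on $U_h$.

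I expect this last piece of bookkeeping to be the main obstacle: one must verify that the character attached to $(n,\imath)$ is \emph{not} also fed by any other index pair $(p,q)$ — in particular by a pair of propagating modes whose $\beta$-difference happens to coincide with $\beta^{(j)}_n+\beta^{(j)}_0$ — so that the coefficient equalities genuinely decouple into scalar identities $A^{(1)}_n=A^{(2)}_n$ rather than a coupled system; here the arithmetic condition $\alpha L_j/\pi\notin\Z$ and the geometry of the dispersion circle are used most sharply (for the finitely many propagating indices this may require a secondary induction, ordering them by $\beta^{(j)}_n$). By contrast, all the analytic interchanges of limits, sums and averages in the peeling procedure are routine and are taken care of by the estimates collected in Lemma \ref{lem-x}.
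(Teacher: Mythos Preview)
Your proposal is correct and follows essentially the same route as the paper: expand $|u_j|^2$ via the Rayleigh series, separate propagating from evanescent contributions, and isolate one character at a time by multiplying by a suitable $e^{-i(\beta-\beta_\imath)x_2}$, averaging in $x_2$ (Lemma~\ref{lem-x}(iv) for oscillatory terms, $x_2\to+\infty$ for decaying ones), and then averaging in $x_1$. The ``bookkeeping'' obstacle you single out is precisely what the paper isolates as a standalone lemma (Lemma~\ref{l-n}): if $(\alpha^{(1)}_{\tilde m}-\alpha_\imath,\beta^{(1)}_{\tilde m}-\beta_\imath)=(\alpha^{(j)}_m-\alpha^{(j)}_n,\beta^{(j)}_m-\overline{\beta^{(j)}_n})$ then necessarily $n=\imath$ and $(\alpha^{(j)}_m,\beta^{(j)}_m)=(\alpha^{(1)}_{\tilde m},\beta^{(1)}_{\tilde m})$; the proof is a short case analysis (propagating vs.\ evanescent $\tilde m$) using the equal-chord geometry on the circle $\xi^2+\eta^2=k^2$ together with Lemma~\ref{l2}, rather than a concavity argument or secondary induction on the propagating indices. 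For the evanescent coefficients the paper does exactly the inductive peeling by increasing $|\beta^{(j)}_m|$ that you describe.
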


To prove Theorem \ref{TH-phaseless-phase}, we need several auxiliary lemmata.
Let $\alpha_n$ and $\beta_n$ be defined by (\ref{beta}) with some $\theta\!\in\!(-\pi/2,\pi/2)$,
and let $\imath$ be the index for the incident plane wave (see (\ref{ui})).

\begin{lem}\label{l2}
If $\alpha L/\pi\notin\Z$, then $\alpha_n\neq-\alpha_{\imath}$ for all $n\in\Z\cup\{\imath\}$.
\end{lem}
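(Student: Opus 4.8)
The plan is to argue directly from the definitions \eqref{beta} and \eqref{ui}, splitting into the two cases $n=\imath$ and $n\in\Z$. Recall that $\alpha_{\imath}=\alpha=\alpha_0$ and $-\alpha_{\imath}=-\alpha$, so the claim amounts to showing $\alpha+2n\pi/L\neq-\alpha$ for every $n\in\Z$ together with $\alpha\neq-\alpha$ for the incident index itself.

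First I would dispatch the case $n=\imath$. Here $\alpha_{\imath}=\alpha$, so $\alpha_{\imath}=-\alpha_{\imath}$ would force $\alpha=0$, i.e. $\alpha L/\pi=0\in\Z$, contradicting the hypothesis $\alpha L/\pi\notin\Z$. Hence $\alpha_{\imath}\neq-\alpha_{\imath}$.

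Next, for $n\in\Z$, suppose for contradiction that $\alpha_n=-\alpha_{\imath}$. Since $\alpha_n=\alpha+2n\pi/L$ and $\alpha_{\imath}=\alpha$, this reads $\alpha+2n\pi/L=-\alpha$, whence $2\alpha=-2n\pi/L$ and therefore $\alpha L/\pi=-n\in\Z$, again contradicting $\alpha L/\pi\notin\Z$. Combining the two cases yields $\alpha_n\neq-\alpha_{\imath}$ for all $n\in\Z\cup\{\imath\}$.

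There is no real obstacle here: the statement is a one-line consequence of the arithmetic form of $\alpha_n$. The only point requiring a moment's care is to remember that the incident-wave index $\imath$ carries $\alpha_{\imath}=\alpha_0=\alpha$ (so it must be checked, and it is exactly the $n=0$ reflection that the hypothesis $\alpha L/\pi\notin\Z$ is designed to rule out).
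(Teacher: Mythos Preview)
Your proof is correct and mirrors the paper's own argument essentially line for line: both split into the cases $n=\imath$ (forcing $\alpha=0$, hence $\alpha L/\pi=0\in\Z$) and $n\in\Z$ (forcing $\alpha L/\pi=-n\in\Z$), each contradicting the hypothesis.
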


\begin{proof}
We assume to the contrary that $\alpha_n\!=\!-\alpha_{\imath}$ for $n\!\in\!\Z\cup\{\imath\}$.
Obviously, we have $n\!\neq\!\imath$, since if otherwise there holds $\alpha_{\imath}\!=\!0$,
which contradicts $\alpha L/\pi\!\notin\!\Z$.
If $n\!\in\!\Z$ and $\alpha\!+\!n2\pi/L\!=\!-\alpha_{\imath}\!=\!-\alpha$,
we can get $\alpha L/\pi\!=\!-n\!\in\!\Z$, which also contradicts the assumption that $\alpha L/\pi\!\notin\!\Z$.
\end{proof}

In the following, we retain the notations introduced in the proof of Theorem \ref{TH-phase-period}.

\begin{lem}\label{l-n}
Suppose $\G^{(1)}$ and $\G^{(2)}$ are two grating curves with the periods $L_1>0$ and $L_2>0$, respectively.
Assume that $\alpha L_j/\pi\notin\Z$ for $j=1,2$.
Then the following statements hold.

(i) For any fixed $\tilde m\in\Z$, if
\be\label{213}
(\alpha^{(1)}_{\tilde m}-\alpha_{\imath},\beta^{(1)}_{\tilde m}-\beta_{\imath})
=(\alpha^{(2)}_{m}-\alpha^{(2)}_{n},\beta^{(2)}_{m}-\ov{\beta^{(2)}_{n}}),
\en
for some $m,n\!\in\!\Z\!\cup\!\{\imath\}$,
then $(\alpha^{(1)}_{\tilde m},\beta^{(1)}_{\tilde m})=(\alpha^{(2)}_{m},\beta^{(2)}_{m})$ and $n=\imath$.

(ii) For any fixed $\tilde m\in\Z$, if
\ben
(\alpha^{(1)}_{\tilde m}-\alpha_{\imath},\beta^{(1)}_{\tilde m}-\beta_{\imath})=(\alpha^{(1)}_{m}-\alpha^{(1)}_{n},\beta^{(1)}_{m}-\ov{\beta^{(1)}_{n}}),
\enn
for some $m,n\in\Z\cup\{\imath\}$,
then $(\alpha^{(1)}_{\tilde m},\beta^{(1)}_{\tilde m})=(\alpha^{(1)}_{m},\beta^{(1)}_{m})$ and $n=\imath$.
\end{lem}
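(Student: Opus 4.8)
The plan is to treat both parts simultaneously, the only difference being whether the right-hand side involves $\beta_n^{(2)}$ or $\beta_n^{(1)}$; I will carry out part (i) and remark that part (ii) follows by setting $\Gamma^{(2)}=\Gamma^{(1)}$. First I would compare the real-index parts: since $\alpha_{\tilde m}^{(1)}=\alpha+2\tilde m\pi/L_1$, $\alpha_m^{(2)}=\alpha+2m\pi/L_2$ (and $\alpha_{\imath}=\alpha$), the first component of \eqref{213} reads $\alpha_{\tilde m}^{(1)}-\alpha=\alpha_m^{(2)}-\alpha_n^{(2)}$, which by the definitions is a statement about $\alpha$-shifts. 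The key observation is that $\alpha_m^{(2)}-\alpha_n^{(2)}=2(m-n)\pi/L_2$ is independent of $\alpha$ when $m,n\in\Z$, so comparing with $\alpha_{\tilde m}^{(1)}-\alpha$ will force a relation among $\alpha$, $L_1$, $L_2$. Using $\alpha L_1/\pi\notin\Z$ and Lemma \ref{l2} (applied for the period $L_2$, say), I expect to rule out the case $n\in\Z$, i.e.\ $n$ must be the incident index $\imath$, or $m$ must be.

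The cleanest route is a case analysis on which of $m,n$ equal $\imath$. If $n=\imath$, then $\alpha_n^{(2)}=\alpha$ and $\beta_n^{(2)}=-\beta_0^{(2)}=-\beta_{\imath}$ with $\overline{\beta_n^{(2)}}=\overline{-\beta_{\imath}}$; since $\beta_{\imath}=-k\cos\theta$ is real, $\overline{\beta_n^{(2)}}=-\beta_{\imath}=\beta_{\imath}$ wait — more carefully, $\beta_{\imath}=-k\cos\theta\in\R$ so $\overline{\beta_{\imath}}=\beta_{\imath}$, hence the second component of \eqref{213} becomes $\beta_{\tilde m}^{(1)}-\beta_{\imath}=\beta_m^{(2)}-\beta_{\imath}$, giving $\beta_{\tilde m}^{(1)}=\beta_m^{(2)}$; together with $\alpha_{\tilde m}^{(1)}=\alpha_m^{(2)}$ (from the first component) this is exactly the conclusion. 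So the real work is to show $n=\imath$ is forced. If instead $n\in\Z$, I would show that the first component forces $m\in\Z$ as well (since $\alpha_{\tilde m}^{(1)}-\alpha=2(m-n)\pi/L_2$ and also equals $2\tilde m\pi/L_1$, consistent only with integer indices), and then the second component $\beta_{\tilde m}^{(1)}-\beta_{\imath}=\beta_m^{(2)}-\overline{\beta_n^{(2)}}$ must be dealt with. Here I split on the signs of the $\beta$'s (propagating vs.\ evanescent): $\beta_{\imath}=-k\cos\theta<0$ is real and negative, whereas every $\beta_j^{(1)},\beta_j^{(2)}$ is either $\geq 0$ real or purely imaginary with positive imaginary part. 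Examining the imaginary parts of both sides, and then the real parts, should produce a contradiction with $\alpha L_j/\pi\notin\Z$ — e.g.\ the left side has imaginary part $\ima\beta_{\tilde m}^{(1)}\geq 0$ while the right side has imaginary part $\ima\beta_m^{(2)}+\ima\beta_n^{(2)}\geq 0$ with equality forcing $m,n$ both propagating, at which point the real-part equation becomes an identity among square roots $\sqrt{k^2-(\alpha+2\tilde m\pi/L_1)^2}+(-k\cos\theta)=\sqrt{k^2-(\alpha_m^{(2)})^2}-\sqrt{k^2-(\alpha_n^{(2)})^2}$, and one checks this cannot hold unless $\alpha_n^{(2)}=\pm\alpha_{\imath}$, excluded by Lemma \ref{l2}.

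The step I expect to be the main obstacle is precisely this last square-root bookkeeping in the case where all indices are in $\Z$ and all the $\beta$'s are real (the "all-propagating" subcase): one must rule out nontrivial coincidences among sums and differences of the finitely many propagating wave numbers. I anticipate the right tool is to write $\beta_{\tilde m}^{(1)}+k\cos\theta=\beta_m^{(2)}-\beta_n^{(2)}$, note the left-hand side is nonnegative (it equals $\beta_{\tilde m}^{(1)}-\beta_{\imath}$ with $\beta_{\imath}<0$) and strictly positive unless $\tilde m$ is propagating with $\beta_{\tilde m}^{(1)}=-k\cos\theta$, then square appropriately and use that $\alpha_{\tilde m}^{(1)},\alpha_m^{(2)},\alpha_n^{(2)}$ determine the $\beta$'s, reducing everything to the single equation $\alpha_n^{(2)}=\alpha_{\imath}$ or $\alpha_n^{(2)}=-\alpha_{\imath}$; the first gives $n=\imath$ (done), the second is excluded by Lemma \ref{l2}. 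Once $n=\imath$ is established in all cases, the conclusion $(\alpha_{\tilde m}^{(1)},\beta_{\tilde m}^{(1)})=(\alpha_m^{(2)},\beta_m^{(2)})$ is immediate from \eqref{213} as indicated above, and part (ii) is the special case $\Gamma^{(2)}=\Gamma^{(1)}$, $L_2=L_1$.
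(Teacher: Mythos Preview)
Your plan is on the right track and organised similarly to the paper's proof (the paper also reduces (ii) to (i) and argues via real/imaginary parts and the circle $x_1^2+x_2^2=k^2$), but the sketch has a genuine gap and two smaller slips.

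\textbf{Main gap: the evanescent case is not treated.} Your imaginary-part argument only settles the sub-case $\ima\beta^{(1)}_{\tilde m}=0$ (all modes propagating). When $\beta^{(1)}_{\tilde m}\notin\R$, the equality $\ima\beta^{(1)}_{\tilde m}=\ima\beta^{(2)}_m+\ima\beta^{(2)}_n$ does \emph{not} force $m,n$ propagating; it allows, for instance, $\beta^{(2)}_m\in\R$ and $\beta^{(2)}_n\notin\R$ with $\beta^{(2)}_n=\beta^{(1)}_{\tilde m}$. This is precisely the delicate configuration the paper isolates as its Case 2 with $\beta^{(2)}_m\in\R$: matching real parts gives $\beta^{(2)}_m=-\beta_{\imath}$, hence $|\alpha^{(2)}_m|=|\alpha_{\imath}|$, and Lemma \ref{l2} forces $\alpha^{(2)}_m=\alpha_{\imath}$; then the first component of \eqref{213} becomes $\alpha^{(1)}_{\tilde m}-\alpha_{\imath}=\alpha_{\imath}-\alpha^{(2)}_n$, and combining with $|\alpha^{(1)}_{\tilde m}|=|\alpha^{(2)}_n|$ (from $\beta^{(1)}_{\tilde m}=\beta^{(2)}_n$) yields a contradiction via $\alpha L_j/\pi\notin\Z$. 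Nothing in your outline covers this, and ``examining real and imaginary parts should produce a contradiction'' is not enough here: the contradiction requires the same algebraic reduction you propose for the propagating case, applied again.

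\textbf{Two smaller slips.} First, in the all-propagating case you conclude $\alpha^{(2)}_n=\alpha_{\imath}\Rightarrow n=\imath$; this is false, since $n=0$ also gives $\alpha^{(2)}_0=\alpha_{\imath}$. You must use the $\beta$-component too: the chord argument (or your squaring) actually gives the full identity $(\alpha^{(2)}_n,\beta^{(2)}_n)=(\alpha_{\imath},\beta_{\imath})$, and $\beta_{\imath}<0$ while $\beta^{(2)}_n\geq 0$ for $n\in\Z$ forces $n=\imath$. Second, your claim that the first component of \eqref{213} forces $m\in\Z$ when $n\in\Z$ is not correct as stated: if $m=\imath$ one has $\alpha^{(2)}_m-\alpha^{(2)}_n=-2n\pi/L_2$, which is indistinguishable from the case $m=0$ at the level of the $\alpha$-equation. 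The correct way to exclude $m=\imath$ (when $n\in\Z$) is via the real part of the $\beta$-equation: $\real(\beta^{(2)}_{\imath}-\ov{\beta^{(2)}_n})\leq\beta_{\imath}<0$, whereas $\real(\beta^{(1)}_{\tilde m}-\beta_{\imath})\geq -\beta_{\imath}>0$.

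In short, your propagating-mode analysis is essentially the paper's Case 1 (circle/chord) in algebraic dress, but you still owe the full treatment of the evanescent case (the paper's Case 2), which is where most of the work lies.
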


\begin{proof}
We only prove statement (i) since statement (ii) is a consequence of statement (i) for
the special case when $\Gamma^{(1)}\!=\!\Gamma^{(2)}$.

We consider the following two cases:

\textbf{Case 1}: $\beta^{(1)}_{\tilde m}\in\R$.

Noting that $\beta^{(1)}_{\tilde m}-\beta_{\imath}>0$, we conclude from (\ref{213}) that $\beta^{(2)}_{m},\beta^{(2)}_{n}\!\in\!\R$.
Hence, the points $(\alpha_{\imath},\beta_{\imath})$, $(\alpha^{(1)}_{\tilde m},\beta^{(1)}_{\tilde m})$, $(\alpha^{(2)}_{m},\beta^{(2)}_{m})$ and
$(\alpha^{(2)}_{n},\beta^{(2)}_{n})$ are all located on the circle $x_1^2+x_2^2=k^2$ in the $x_1x_2$-plane.
From this and the relation (\ref{213}), it follows easily that there holds either
\be\label{x11}
(\alpha^{(2)}_{m},\beta^{(2)}_{m})=(\alpha^{(1)}_{\tilde m},\beta^{(1)}_{\tilde m})\;\text{and}\;(\alpha^{(2)}_{n},\beta^{(2)}_{n})=(\alpha_{\imath},\beta_{\imath})
\en
or
\be\label{x12}
(\alpha^{(2)}_{m},\beta^{(2)}_{m})=-(\alpha_{\imath},\beta_{\imath})\;\text{and}\;
(\alpha^{(2)}_{n},\beta^{(2)}_{n})=-(\alpha^{(1)}_{\tilde m},\beta^{(1)}_{\tilde m}).
\en
By Lemma \ref{l2} and the assumption $\alpha L_2/\pi\!\notin\!\Z$, the relations in (\ref{x12}) cannot be true.
Hence, the relations in (\ref{x11}) implies the desired result {\color{hw}of this lemma}.

\textbf{Case 2}: $\beta^{(1)}_{\tilde m}\notin\R$.

Observing that ${\rm Re}(\beta^{(1)}_{\tilde m}\!-\!\beta_{\imath})\!>\!0$ and
${\rm Im}(\beta^{(1)}_{\tilde m}\!-\!\beta_{\imath})\!>\!0$, we deduce from (\ref{213}) that
${\rm Re}(\beta^{(2)}_{m}\!-\!{\color{hw}\ov{\beta^{(2)}_{n}}})\!>\!0$ and
${\rm Im}(\beta^{(2)}_{m}\!-\!{\color{hw}\ov{\beta^{(2)}_{n}}})\!>\!0$.

If $\beta^{(2)}_{m}\!\notin\!\R$, then $\beta^{(2)}_{m}/i\!\in\!\R$.
This, together with ${\rm Re}(\beta^{(2)}_{m}\!-\!{\color{hw}\ov{\beta^{(2)}_{n}}})\!>\!0$, implies ${\rm Re}(-{\color{hw}\ov{\beta^{(2)}_{n}}})\!>\!0$.
This is possible only if $n=\imath$, since ${\rm Re}\,\beta_{n}^{(2)}\ge0$ for all $n\in\Z$.
Again using (\ref{213}), we find $(\alpha^{(1)}_{\tilde m},\beta^{(1)}_{\tilde m})=(\alpha^{(2)}_{m},\beta^{(2)}_{m})$, which yields the desired result of this lemma.

Now suppose that $\beta^{(2)}_{m}\!\in\!\R$, we shall derive a contradiction as follows.
Taking the real and imaginary parts of (\ref{213}) gives $\beta^{(2)}_{m}\!=\!-\beta_{\imath}$ and $\beta^{(2)}_{n}\!=\!\beta^{(1)}_{\tilde m}$.
Noting that $(\alpha^{(2)}_{m})^2\!+\!(\beta^{(2)}_{m})^2\!=\!k^2\!=\!(\alpha_{\imath})^2\!+\!(\beta_{\imath})^2$, we deduce from $\beta^{(2)}_{m}\!=\!-\beta_{\imath}$ that $|\alpha^{(2)}_{m}|\!=\!|\alpha_{\imath}|$.
Then by $\alpha L_2/\pi\!\notin\!\Z$ and Lemma \ref{l2} we obtain $\alpha^{(2)}_{m}\!=\!\alpha_{\imath}$.
Inserting this equality into (\ref{213}) gives
\be\label{213new}
\alpha^{(1)}_{\tilde m}-\alpha_{\imath}=\alpha^{(2)}_{m}-\alpha^{(2)}_{n}=\alpha_{\imath}-\alpha^{(2)}_{n}.
\en
Similarly, noting that $(\alpha^{(2)}_{n})^2\!+\!(\beta^{(2)}_{n})^2\!=\!k^2\!=\!(\alpha^{(1)}_{\tilde m})^2\!+\!(\beta^{(1)}_{\tilde m})^2$, we deduce from $\beta^{(2)}_{n}\!=\!\beta^{(1)}_{\tilde m}$ that $|\alpha^{(1)}_{\tilde m}|\!=\!|\alpha^{(2)}_{n}|$.
If $\alpha^{(1)}_{\tilde m}\!=\!\alpha^{(2)}_{n}$, then it follows from (\ref{213new}) that $\alpha^{(1)}_{\tilde m}\!=\!\alpha_{\imath}\!=\!\alpha^{(2)}_{n}$ and thus $\beta_{n}^{(2)}\!\in\!\{\pm\beta_{\imath}\}\!\subset\!\R$.
This contradicts $\beta^{(2)}_{n}\!=\!\beta^{(1)}_{\tilde m}\!\notin\!\R$.
If $\alpha^{(1)}_{\tilde m}\!=\!-\alpha^{(2)}_{n}$, then from (\ref{213new}) we deduce $\alpha_{\imath}\!=\!0$, which contradicts the {\color{red}assumption $\alpha L_2/\!\notin\!\Z$}.
The proof for Case 2 is complete.
\end{proof}

With the aid of Lemma \ref{l-n}, now we can prove Theorem \ref{TH-phaseless-phase}.

\begin{proof}[Proof of Theorem \ref{TH-phaseless-phase}]
Recalling (\ref{ui}) and (\ref{z-1}), we deduce from (\ref{z-3}) that
\be\no
&&I^{(1)}_1(x)\ov{I^{(1)}_2(x)}+I^{(1)}_2(x)\ov{I^{(1)}_1(x)}+|I^{(1)}_1(x)|^2+|I^{(1)}_2(x)|^2\\ \label{1108-1}
&&-\!I^{(2)}_1(x)\ov{I^{(2)}_2(x)}-I^{(2)}_2(x)\ov{I^{(2)}_1(x)}-|I^{(2)}_1(x)|^2-|I^{(2)}_2(x)|^2=0,\quad x\in U_h,
\en
where
\ben
I^{(j)}_1(x)=\sum_{m\in\mathcal T^{(j)}_1}\!\!\!\!A_{m}^{(j)}e^{i\alpha^{(j)}_{m}x_1+i\beta^{(j)}_{m}x_2},\quad I^{(j)}_2(x)=\sum_{n\in\mathcal T^{(j)}_2}\!\!\!\!A_{n}^{(j)}e^{i\alpha^{(j)}_{n}x_1+i\beta^{(j)}_{n}x_2}
\enn
with $\mathcal T^{(j)}_1\!:=\!\{n\!\in\!\Z\!:\!|\alpha^{(j)}_n|\!>\!k\}$ and $\mathcal T^{(j)}_2\!:=\!\{n\!\in\!\Z\!\cup\!\{\imath\}\!:\!|\alpha^{(j)}_n|\!\leq\!k\}$, $j\!=\!1,2$.

The proof can be divided into two steps as follows.

\textbf{Step 1.} We will prove that for any $\tilde m\in\mathcal T^{(1)}_2\ba\{\imath\}$ there holds
\be\label{1121-9}
\begin{cases}
A_{\tilde m}^{(1)}=A_{m}^{(2)} & \text{if there exists}\;m\!\in\!\Z\;\text{such that}\;
\alpha^{(2)}_{m}=\alpha^{(1)}_{\tilde m},\\
A_{\tilde m}^{(1)}=0 & \text{if}\;\alpha^{(2)}_{m}\neq\alpha^{(1)}_{\tilde m}\;\text{for all}\;m\in\Z,
\end{cases}
\en
and for any $\tilde m\!\in\!\mathcal T^{(2)}_2\ba\{\imath\}$ there holds
\be\label{1121-10}
\begin{cases}
A_{\tilde m}^{(2)}=A_{m}^{(1)} & \text{if there exists}\;m\!\in\!\Z\;\text{such that}\;\alpha^{(1)}_{m}\!=\!\alpha^{(2)}_{\tilde m},\\
A_{\tilde m}^{(2)}=0 & \text{if}\;\alpha^{(1)}_{m}\neq\alpha^{(2)}_{\tilde m}\;\text{for all}\;m\in\Z.
\end{cases}
\en

First, we deduce (\ref{1121-9}) for $\tilde m\in\mathcal T^{(1)}_2\ba\{\imath\}$.
Multiplying (\ref{1108-1}) by $e^{-i(\beta^{(1)}_{\tilde m}-\beta_{\imath})x_2}$ we obtain for $x\in U_h$ that
\be\label{1121-5}
\quad\;0&=&\left\{I^{(1)}_1(x)\ov{I^{(1)}_2(x)}+I^{(1)}_2(x)\ov{I^{(1)}_1(x)}+|I^{(1)}_1(x)|^2\right\}
e^{-i(\beta^{(1)}_{\tilde m}-\beta_{\imath})x_2}\\ \no
&&+\sum_{(m,n)\in\mathcal U^{(1)}_{\tilde m}}\;
A_{m}^{(1)}\ov{A_{n}^{(1)}}e^{i(\alpha^{(1)}_{m}-\alpha^{(1)}_{n})x_1}-\sum_{(m,n)\in\mathcal U^{(2)}_{\tilde m}}\;
A_{m}^{(2)}{\color{hw}\ov{A_{n}^{(2)}}}e^{i(\alpha^{(2)}_{m}-\alpha^{(2)}_{n})x_1}\\ \no
&&+\sum_{(m,n)\in(\mathcal T^{(1)}_2\times\mathcal T^{(1)}_2)\ba\mathcal U^{(1)}_{\tilde m}}\;
A_m^{(1)}\ov{A_n^{(1)}}e^{i(\alpha^{(1)}_m-\alpha^{(1)}_n)x_1+i[(\beta^{(1)}_m-\ov{\beta^{(1)}_n})
-(\beta^{(1)}_{\tilde m}-\beta_{\imath})]x_2}\\ \no
&&-\left\{I^{(2)}_1(x){\color{hw}\ov{I^{(2)}_2(x)}}\!+\!I^{(2)}_2(x)\ov{I^{(2)}_1(x)}+|I^{(2)}_1(x)|^2\right\}
e^{-i(\beta^{(1)}_{\tilde m}-\beta_{\imath})x_2}\\ \no
&&-\sum_{(m,n)\in(\mathcal T^{(2)}_2\times\mathcal T^{(2)}_2)\ba\mathcal U^{(2)}_{\tilde m}}\;
A_{m}^{(2)}\ov{A_{n}^{(2)}}e^{i(\alpha^{(2)}_{m}-\alpha^{(2)}_{n})x_1+i[(\beta^{(2)}_{m}-\ov{\beta^{(2)}_{n}})
-(\beta^{(1)}_{\tilde m}-\beta_{\imath})]x_2}
\en
where $\mathcal U^{(j)}_{\tilde m}:=\{(m,n)\in\mathcal T^{(j)}_2\times\mathcal T^{(j)}_2:\beta^{(j)}_m
-\ov{\beta^{(j)}_n}=\beta^{(1)}_{\tilde m}-\beta_{\imath}\}$, $j=1,2$.
Since $\mathcal T^{(j)}_2$ is a finite set, we know that $\mathcal U^{(j)}_{\tilde m}$ is at most
a finite set, $j=1,2$.
Using $|e^{-i(\beta^{(1)}_{\tilde m}-\beta_{\imath})x_2}|=1$, it follows from Lemma \ref{lem-x} (i) that
\ben
\left|\left\{I^{(j)}_1(x)\ov{I^{(j)}_2(x)}+I^{(j)}_2(x)\ov{I^{(j)}_1(x)}
+|I^{(j)}_1(x)|^2\right\}e^{-i(\beta^{(1)}_{\tilde m}-\beta_{\imath})x_2}\right|\leq C|I^{(j)}_1|,\quad x\in U_h,
\enn
where $C\!>\!0$ is a constant.
Thus, by similar arguments as in the proofs of (\ref{eq3}) and (\ref{eq8}),
we have $|I^{(j)}_1(x)|\!\rightarrow\!0$ as $x_2\!\rightarrow\!+\infty$ and thus
\ben\label{1121-1}
\lim_{H\rightarrow+\infty}\frac{1}{H}\int^{2H}_{H}\left\{I^{(j)}_1(x)\ov{I^{(j)}_2(x)}+I^{(j)}_2(x)\ov{I^{(j)}_1(x)}
+|I^{(j)}_1(x)|^2\right\}e^{-i(\beta^{(1)}_{\tilde m}-\beta_{\imath})x_2}dx_2=0
\enn
uniformly for all $x_1\!\in\!\R$ and $j\!=\!1,2$. Moreover, it follows easily from Lemma \ref{lem-x} (iv) that
\ben \label{1121-2}
\lim_{H\rightarrow+\infty}\frac1H\int_{H}^{2H}\!\!\!\!\!\sum_{(m,n)\in(\mathcal T^{(j)}_2\times
\mathcal T^{(j)}_2)\ba\mathcal U^{(j)}_{\tilde m}}\!\!\!\!\!A_{m}^{(j)}\ov{A_{n}^{(j)}}e^{i(\alpha^{(j)}_m-\alpha^{(j)}_n)x_1
+i[(\beta^{(j)}_{m}-\ov{\beta^{(j)}_{n}})-(\beta^{(1)}_{\tilde m}-\beta_{\imath})]x_2}dx_2=0
\enn
uniformly for all $x_1\!\in\!\R$ and $j\!=\!1,2$.
Combining (\ref{1121-5})--(\ref{1121-2}), we arrive at
\ben\label{1121-6}
\sum_{(m,n)\in\mathcal U^{(1)}_{\tilde m}}\;A_{m}^{(1)}\ov{A_{n}^{(1)}}e^{i(\alpha^{(1)}_{m}-\alpha^{(1)}_{n})x_1}
-\sum_{(m,n)\in\mathcal U^{(2)}_{\tilde m}}\;A_m^{(2)}\ov{A_n^{(2)}}e^{i(\alpha^{(2)}_m-\alpha^{(2)}_n)x_1}=0,
\quad x_1\in\R.
\enn
Similarly, multiplying (\ref{1121-6}) by $e^{-i(\alpha^{(1)}_{\tilde m}-\alpha_{\imath})x_1}$, we can employ Lemma \ref{lem-x} (iv) to obtain
\be\label{1121-7}
\sum_{(m,n)\in\mathcal V^{(1)}_{\tilde m}}\;A_{m}^{(1)}\ov{A_{n}^{(1)}}
-\sum_{(m,n)\in\mathcal V^{(2)}_{\tilde m}}\;A_{m}^{(2)}\ov{A_{n}^{(2)}}=0,
\en
where $\mathcal V^{(j)}_{\tilde m}:=\{(m,n)\in\mathcal U^{(j)}_{\tilde m}:\alpha^{(j)}_{m}-\alpha^{(j)}_{n}
=\alpha^{(1)}_{\tilde m}\!-\!\alpha_{\imath}\}$, $j\!=\!1,2$.
By Lemma \ref{l-n} we have $\mathcal V^{(1)}_{\tilde m}\!=\!\{(\tilde m,\imath)\}$
and
$\mathcal V^{(2)}_{\tilde m}=\{(m,\imath):m\in\Z\;\text{s.t.}\;\alpha^{(2)}_m=\alpha^{(1)}_{\tilde m}\}$.
Thus, noting that $\mathcal V^{(2)}_{\tilde m}$ is perhaps an empty set and
$A_{\imath}^{(1)}\!=\!1\!=\!A_{\imath}^{(2)}$, we can apply (\ref{1121-7})
to obtain that (\ref{1121-9}) holds for $\tilde m\in\mathcal T^{(1)}_2\ba\{\imath\}$.

Secondly, by interchanging the role of $|u_1(x;\theta)|$ and $|u_2(x;\theta)|$, we can employ
a similar argument as above to obtain (\ref{1121-10}) holds for
any $\tilde m\in\mathcal T^{(2)}_2\ba\{\imath\}$.

\textbf{Step 2.} We will prove that (\ref{1121-9}) holds for any $\tilde m\!\in\!\mathcal T^{(1)}_1$
and (\ref{1121-10}) holds for any $\tilde m\!\in\!\mathcal T^{(2)}_1$.

By $A_{\imath}^{(1)}\!=\!A_{\imath}^{(2)}\!=\!1$, it follows from (\ref{1108-1}) and the result in Step 1 that
\be\no
&&I^{(1)}_1(x)\ov{I^{(1)}_2(x)}+I^{(1)}_2(x)\ov{I^{(1)}_1(x)}+|I^{(1)}_1(x)|^2\\ \label{1121-11}
&&-I^{(2)}_1(x)\ov{I^{(2)}_2(x)}-I^{(2)}_2(x)\ov{I^{(2)}_1(x)}-|I^{(2)}_1(x)|^2=0,\quad x\in U_h.
\en
Let $(p_1,q_1)$ be an element in $\mathcal{B}:=\{(1,m):m\in\mathcal T^{(1)}_1\}\cup\{(2,m):m\in\mathcal T^{(2)}_1\}$
such that $|\beta^{(p_1)}_{q_1}|\!\leq\!|\beta^{(j)}_m|$ for all $(j,m)\!\in\!\mathcal{B}$.
Without loss of generality, we assume $p_1=1$.
Multiplying (\ref{1121-11}) by $e^{-i(\beta^{(1)}_{q_1}-\beta_{\imath})x_2}$ we obtain for $x\in U_h$ that
\be\label{eq9}
&&\left[I^{(1)}_1(x)e^{-i\beta^{(1)}_{q_1}x_2}\right]\left[\left(\ov{I^{(1)}_2(x)}+\ov{I^{(1)}_1(x)}\right)
e^{i\beta_{\imath}x_2}\right]\!+\!\left[I^{(1)}_2(x)e^{i\beta_{\imath}x_2}\right]\left[\ov{I^{(1)}_1(x)}
e^{-i\beta^{(1)}_{q_1}x_2}\right]\\ \nonumber
&-&\left[I^{(2)}_1(x)
e^{-i\beta^{(1)}_{q_1}x_2}\right]\left[\left(\ov{I^{(2)}_2(x)}+\ov{I^{(2)}_1(x)}\right)
e^{i\beta_{\imath}x_2}\right]\!+\!\left[I^{(2)}_2(x)e^{i\beta_{\imath}x_2}\right]\left[\ov{I^{(2)}_1(x)}
e^{-i\beta^{(1)}_{q_1}x_2}\right]\\\no
&=&0.
\en
Note that $\beta^{(j)}_m\!=\!-\ov{\beta^{(j)}_m}$ and $\left|\beta^{(1)}_{q_1}\right|\!<\!\left|\beta^{(j)}_m\!-\!\ov{\beta^{(j)}_n}\right|$
for all $m,n\!\in\!\mathcal{T}^{(j)}_1$ with $j\!=\!1,2$.
Thus, similarly to the proof of Theorem \ref{TH-phase-period}, we can apply Lemma \ref{lem-x}
to obtain that for all $j\!=\!1,2$ and $x_1\!\in\!\mathbb{R}$,
\begin{align*}
&\lim_{x_2\rightarrow+\infty}I^{(j)}_1(x)e^{-i\beta^{(1)}_{q_1}x_2}=\sum_{m\in\mathcal T^{(j)}_1~\textrm{s.t.}~ \beta^{(j)}_m=\beta^{(1)}_{q_1}} A^{(j)}_m e^{i\alpha^{(j)}_m x_1},\\
&\lim_{x_2\rightarrow+\infty}\ov{I^{(j)}_1(x)}e^{-i\beta^{(1)}_{q_1}x_2}=\sum_{n\in\mathcal T^{(j)}_1~\textrm{s.t.}~ \beta^{(j)}_n=\beta^{(1)}_{q_1}} \ov{A^{(j)}_n} e^{-i\alpha^{(j)}_n x_1},\\
&\lim_{x_2\rightarrow+\infty}\left|I^{(j)}_1(x)\right|^2 e^{-i(\beta^{(1)}_{q_1}-\beta_\imath)x_2}=0
\end{align*}
and
\begin{align*}
&\lim_{H\rightarrow+\infty}\frac{1}{H}\int^{2H}_{H}I^{(j)}_2(x)e^{i\beta_\imath x_2}dx_2
=\sum_{m\in\mathcal T^{(j)}_2~\textrm{s.t.}~\beta^{(j)}_m=-\beta_\imath}A^{(j)}_m e^{i\alpha^{(j)}_m x_1},\\
&\lim_{H\rightarrow+\infty}\frac{1}{H}\int^{2H}_{H}\ov{I^{(j)}_2(x)}e^{i\beta_\imath x_2}dx_2
=\sum_{n\in \mathcal T^{(j)}_2~\textrm{s.t.}~\beta^{(j)}_n=\beta_\imath}\ov{A^{(j)}_n} e^{-i\alpha^{(j)}_n x_1}.
\end{align*}
These together with (\ref{eq9}) imply for $x_1\in\mathbb{R}$ that
\be\label{eq10}
\sum_{(m,n)\in\mathcal{U}^{(1)}_{(1,q_1)}} A^{(1)}_m\ov{A^{(1)}_n}e^{i(\alpha^{(1)}_m-\alpha^{(1)}_n)x_1}=\sum_{(m,n)\in\mathcal{U}^{(2)}_{(1,q_1)}} A^{(2)}_m\ov{A^{(2)}_n}e^{i(\alpha^{(2)}_m-\alpha^{(2)}_n)x_1},
\en
where $\mathcal{U}^{(j)}_{q_1}:=\{(m,n)\in\mathcal{T}^{(j)}_1\times\mathcal{T}^{(j)}_2:\beta^{(j)}_m
=\beta^{(1)}_{q_1},\beta^{(j)}_n=\beta_\imath\}\cup\{(m,n)\in\mathcal{T}^{(j)}_2\times\mathcal{T}^{(j)}_1:
\beta^{(j)}_m=-\beta_\imath,\beta^{(j)}_n=\beta^{(1)}_{q_1}\}$ for $j=1,2$.
It is clear that $\mathcal U^{(j)}_{q_1}=\{(m,n)\in(\Z\cup\{\imath\})^2:\beta^{(j)}_m-\ov{\beta^{(j)}_n}
=\beta^{(1)}_{q_1}-\beta_\imath\}$ for $j=1,2$.
Note that $\mathcal{U}^{(1)}_{q_1}$ and $\mathcal{U}^{(2)}_{q_1}$ are at most finite sets.
Then  multiplying (\ref{eq10}) by $e^{-i(\alpha^{(1)}_{q_1}-\alpha_{\imath})x_1}$, we can apply
Lemma \ref{lem-x} (iv) to obtain
\begin{align}\label{eq13}
\sum_{(m,n)\in\mathcal{V}^{(1)}_{q_1}}
A^{(1)}_m \ov{A^{(1)}_n}
=\sum_{(m,n)\in\mathcal{V}^{(2)}_{q_1}}
A^{(2)}_m \ov{A^{(2)}_n},
\end{align}
where $\mathcal{V}^{(j)}_{q_1}:=\{(m,n)\in\mathcal{U}^{(j)}_{q_1}:\alpha^{(j)}_m-\alpha^{(j)}_n
=\alpha^{(1)}_{q_1}-\alpha_\imath\}$ for $j=1,2$.
By Lemma \ref{l-n}, we have $\mathcal{V}^{(1)}_{q_1}=\{(q_1,\imath)\}$
and $\mathcal{V}^{(2)}_{q_1}\!=\!\{(m,\imath):m\in\Z\;\text{s.t.}\;\alpha^{(2)}_{m}=\alpha^{(1)}_{q_1}\}$.
Now we can apply (\ref{eq13}) and $A_{\imath}^{(1)}=1=A_{\imath}^{(2)}$ to obtain
that (\ref{1121-9}) holds for $\tilde{m}\!=\!q_1$.

To proceed further, we distinguish between the following two cases.

\textbf{Case 2.1}: there exists $q_2\in\Z$ such that $\alpha^{(2)}_{q_2}=\alpha^{(1)}_{q_1}$.
It is clear that $A^{(1)}_{q_1}\!=\!A^{(2)}_{q_2}$
and $q_2\!\in\!\mathcal{T}^{(2)}_1$, thus we have (\ref{1121-10}) holds for $\tilde{m}\!=\!q_2$.
These, together with $A_{\imath}^{(1)}\!=\!A_{\imath}^{(2)}\!=\!1$
and the result in step 1, imply that
$\widehat{I}^{(1)}_2(x)=\widehat{I}^{(2)}_2(x)$ in $x\in U_h$, where
\ben
\widehat{I}^{(j)}_2(x)=\sum_{n\in\mathcal T^{(j)}_2\cup\{q_j\}}\!\!\!\!A_{n}^{(j)}e^{i\alpha^{(j)}_{n}x_1+i\beta^{(j)}_{n}x_2},
\quad j=1,2.
\enn
Thus, it follows from  (\ref{z-3}) that
\ben
&&\widehat{I}^{(1)}_1(x)\ov{\widehat{I}^{(1)}_2(x)}+\widehat{I}^{(1)}_2(x)\ov{\widehat{I}^{(1)}_1(x)}
+|\widehat{I}^{(1)}_1(x)|^2\\
&&-\widehat{I}^{(2)}_1(x)\ov{\widehat{I}^{(2)}_2(x)}-\widehat{I}^{(2)}_2(x)\ov{\widehat{I}^{(2)}_1(x)}
-|\widehat{I}^{(2)}_1(x)|^2=0,\quad x\in U_h,
\enn
where
\ben
\widehat{I}^{(j)}_1(x)=\sum_{m\in\mathcal T^{(j)}_1\backslash\{q_j\}}\;\;
A_{m}^{(j)}e^{i\alpha^{(j)}_{m}x_1+i\beta^{(j)}_{m}x_2}, \quad j=1,2.
\enn
Let $(p_3,q_3)$
be an element in $\mathcal{C}\!:=\!\mathcal{B}\backslash\{(1,q_1),(2,q_2)\}$ s.t. $|\beta^{(p_3)}_{q_3}|\!\leq\!|\beta^{(j)}_m|$
for all $(j,m)\!\in\!\mathcal{C}$.
Then using similar arguments as above, we can obtain that (\ref{1121-9}) holds
for $\tilde{m}\!=\!q_3$ if $p_3\!=\!1$ and (\ref{1121-10}) holds for $\tilde{m}\!=\!q_3$ if $p_3\!=\!2$.

\textbf{Case 2.2}: $\alpha^{(2)}_{m}\!\neq\!\alpha^{(1)}_{q_1}$ for all $m\in\Z$. In this case,
$A^{(1)}_{q_1}=0$. Thus, similarly to Case 2.1, it follows from (\ref{z-3}) and the result in Step 1 that
\ben
&&\widehat{I}^{(1)}_1(x)\ov{I^{(1)}_2(x)}+I^{(1)}_2(x)\ov{\widehat{I}^{(1)}_1(x)}+|\widehat{I}^{(1)}_1(x)|^2\\
&&-{I}^{(2)}_1(x)\ov{{I}^{(2)}_2(x)}-I^{(2)}_2(x)\ov{I^{(2)}_1(x)}-|I^{(2)}_1(x)|^2=0,\quad x\in U_h,
\enn
where $\widehat{I}^{(1)}_1(x)$ is given as in case 2.1.
Let $(p_4,q_4)$
be an element in $\mathcal{E}\!:=\!\mathcal{B}\backslash\{(1,q_1)\}$ s.t. $|\beta^{(p_4)}_{q_4}|\le|\beta^{(j)}_m|$
for all $(j,m)\in\mathcal{E}$.
Then using similar arguments as above again, we can obtain that (\ref{1121-9}) holds
for $\tilde{m}=q_4$ if $p_4=1$ and (\ref{1121-10}) holds for $\tilde{m}=q_4$ if $p_4=2$.

For both two cases, we can repeat similar arguments again to obtain that
(\ref{1121-9}) holds for any $\tilde m\!\in\!\mathcal T^{(1)}_1$ and (\ref{1121-10}) holds for
any $\tilde m\!\in\!\mathcal T^{(2)}_1$.

Finally, noting that $A_{\imath}^{(1)}\!=\!A_{\imath}^{(2)}\!=\!1$ and combining
the results in step 1 and step 2, we have $u_1(x;\theta)\!=\!u_2(x;\theta)$ for $x\!\in\!U_h$.
\end{proof}

\begin{rem}{\rm
The proof for Theorem \ref{TH-phaseless-phase} depends only on the Rayleigh expansion (\ref{Rayleigh-up})
of the scattered fields.
Therefore, the phase retrieval result in Theorem \ref{TH-phaseless-phase} remains valid under other
boundary conditions.
}
\end{rem}

Now we are ready to prove Theorem \ref{TH-phaseless}.

\begin{proof}[Proof of Theorem \ref{TH-phaseless}]
For $j=1,2$, denote the period of the unknown grating curve $\G^{(j)}$ by $L_j>0$ and define the set $\mathcal{A}=\{\theta_n:n\in\!\Z_+~\textrm{s.t.}~k\sin\theta_nL_j/\pi\notin\Z~\textrm{for}~j=1,2\}$, where $\{\theta_n\}_{n=1}^\infty$ are the incident angles from the assumption of Theorem \ref{TH-main}.
By the analyticity of $x\mapsto|u_{j}(x;\theta)|^2$ in $\Om$ and Theorem \ref{TH-phaseless-phase},
we have $u_1(x;\theta_n)=u_2(x;\theta_n)$, $x\!\in\!U_h$, for any $\theta_n\in\mathcal{A}$.
Obviously, $\{\theta\!\in\!(-\pi/2,\pi/2):k\sin\theta L_j/\pi\in\Z~\textrm{for}~j=1,2\}$
is a finite set and thus $\mathcal{A}$ is still an infinite set.
Therefore, it follows from Theorem \ref{TH-phase} that $\Gamma^{(1)}=\Gamma^{(2)}$.
\end{proof}

\begin{rem} {\rm
Assume that the conditions presented in Theorem \ref{TH-phaseless} hold true.
Assume further that the grating periods $L_1$ and $L_2$ are known in advance and $L_1\!=\!L_2$,
then the conclusion of Theorem \ref{TH-phaseless} can be proved in a very simple way. In fact,
let $D$ be the bounded domain defined in Subsection \ref{sub-case1} if $\Gamma^{(1)}\cap\Gamma^{(2)}\neq\emptyset$
or the unbounded periodic strip defined in Subsection \ref{sub-case2} if $\Gamma^{(1)}\cap\Gamma^{(2)}=\emptyset$.
Then, due to the analyticity of the total fields and the Dirichlet boundary conditions
on $\G^{(1)}$ and $\G^{(2)}$, we can easily deduce from (\ref{eq14}) that either
$\{u_1(x;\theta_n)\}_{n\in\Z_+}$ or $\{u_2(x;\theta_n)\}_{n\in\Z_+}$
satisfy the Helmholtz equation in $D$  with wave number $k$ and vanish on $\partial D$.
This, together with the same arguments as in Section \ref{sec4}, gives that $\Gamma^{(1)}=\Gamma^{(2)}$.
}
\end{rem}

\section{Conclusion}\label{sec6}

In this paper, we have established uniqueness results for inverse diffraction grating problems for
identifying the period, location and shape of a periodic curve with Dirichlet boundary condition.
Under the a priori smoothness assumption, we proved that the unknown grating curve can be uniquely
determined by the near-field data corresponding to infinitely many
incident plane waves with different angles at a fixed wave number.
If the phase information are not available and the measurement data are taken in a bounded domain above
the grating curve, we proved that the phase information can be uniquely determined by phaseless data
provided the incident angle $\theta$ and the grating period $L$ satisfy the relation $k\sin\theta L/\pi\notin\Z$.
Our phase retrieval result (see Theorem \ref{TH-phaseless-phase}) carries over to other boundary or
transmission conditions.
However, the proof of Theorem \ref{TH-phase} for the case $\Gamma^{(1)}\cap\Gamma^{(2)}\neq\emptyset$
does not apply to the Neumann boundary condition, due to the same difficulty for inverse scattering problems
by bounded obstacles (see \cite[Page 143]{CK19} for details).
In addition, the case that $\Gamma^{(1)}\cap\Gamma^{(2)}=\emptyset $ brings extra difficulties
for treating the discreteness of the so-called $\mu$-eigenvalues in a closed waveguide.
The uniqueness with distinct incident angles for recovering penetrable gratings also remains open.
Thus it requires new mathematical theory to establish analogues of Theorem \ref{TH-phase} under
other boundary conditions.

\section*{Acknowledgments}
The authors would like to thank Professor Chunxiong Zheng from Tsinghua University
for helpful and stimulating discussions on  dispersion relations in a periodic waveguide.


\begin{thebibliography}{10}

\bibitem{AbNe} T. Abboud and J. C. Nedelec.
\newblock Electromagnetic waves in an inhomogeneous medium.
\newblock{\em J. Math. Anal. Appl.}, 164 (1992):40-58.

\bibitem{AHN} A.D. Agaltsov, T. Hohage and R.G. Novikov.
\newblock An iterative approach to monochromatic phaseless inverse scattering.
\newblock {\em Inverse Problems}, 35(2): 024001, 2019.

\bibitem{ACZ} H. Ammari, Y. T. Chow and J. Zou.
\newblock Phased and phaseless domain reconstructions in the inverse scattering problem
via scattering coefficients.
\newblock {\em SIAM J. Appl. Math.}, 76(3): 1000--1030, 2019.

\bibitem{Tilo-hab} T. Arens.
\newblock {\em Scattering by Bi-periodic Layered Media: The Integral Equation Approach},
Habilitationsschrift, KIT, Karlsruhe, 2010.

\bibitem{Arens2005}
T.~Arens and N.~Grinberg.
\newblock A complete factorization method for scattering by periodic surfaces.
\newblock {\em Computing}, 75(2-3):111--132, 2005.

\bibitem{B94}
G.~Bao.
\newblock A uniqueness theorem for an inverse problem in periodic diffractive optics.
\newblock {\em Inverse Problems}, 10(2):335--340, 1994.

\bibitem{BLL} G. Bao, P. Li and J. Lv.
\newblock Numerical solution of an inverse diffraction grating problem from phaseless data.
\newblock {\em J. Opt. Soc. Am. A}, 30(3):293--299, 2013.

\bibitem{B2010}
G.~Bao, H.~Zhang, and J.~Zou.
\newblock Unique determination of periodic polyhedral structures by scattered
  electromagnetic fields.
\newblock {\em Trans. Amer. Math. Soc.}, 363(9):4527--4551, 2011.

\bibitem{BZZ}
G.~Bao, H.~Zhang, and J.~Zou.
\newblock Unique determination of periodic polyhedral structures by scattered
  electromagnetic fields {II}: {T}he resonance case.
\newblock {\em Trans. Amer. Math. Soc.}, 366(3):1333--1361, 2014.

\bibitem{BBS94} A.S. Bonnet-Bendhia and P. Starling,
\newblock Guided waves by electromagnetic gratings and non-uniqueness examples for
the diffraction problem,
\newblock {\em Math. Methods Appl. Sci.}, 17: 2305-338, 1994.

\bibitem{CJ10}
S.~N. Chandler-Wilde and J.~Elschner.
\newblock Variational approach in weighted {S}obolev spaces to scattering by
  unbounded rough surfaces.
\newblock {\em SIAM J. Math. Anal.}, 42(6):2554--2580, 2010.

\bibitem{Monk05}
S.~N. Chandler-Wilde and P.~Monk.
\newblock Existence, uniqueness, and variational methods for scattering by
  unbounded rough surfaces.
\newblock {\em SIAM J. Math. Anal.}, 37(2):598--618, 2005.

\bibitem{Zhang98}
S.N. Chandler-Wilde and B.~Zhang.
\newblock A uniqueness result for scattering by infinite rough surfaces.
\newblock {\em SIAM J. Appl. Math.}, 58(6):1774--1790, 1998.

\bibitem{CK19}
D. Colton and R. Kress.
\newblock {\em {I}nverse Acoustic and Electromagnetic Scattering Theory} (4th Ed.),
\newblock Springer, New York, 2019.

\bibitem{CS1983}
D.~Colton and B.~D. Sleeman.
\newblock Uniqueness theorems for the inverse problem of acoustic scattering.
\newblock {\em IMA J. Appl. Math.}, 31(3):253--259, 1983.

\bibitem{EH2010}
J.~Elschner and G.~Hu.
\newblock Global uniqueness in determining polygonal periodic structures with a
  minimal number of incident plane waves.
\newblock {\em Inverse Problems}, 26(11):115002, 23, 2010.

\bibitem{EH2011}
J.~Elschner and G.~Hu.
\newblock Inverse scattering of elastic waves by periodic structures:
  uniqueness under the third or fourth kind boundary conditions.
\newblock {\em Methods Appl. Anal.}, 18(2):215--243, 2011.


\bibitem{ESY}
J.~Elschner, G.~Schmidt, and M.~Yamamoto.
\newblock Global uniqueness in determining rectangular periodic structures by
  scattering data with a single wave number.
\newblock {\em J. Inverse Ill-Posed Probl.}, 11(3): 235--244, 2003.

\bibitem{J.M2002}
J.~Elschner and M.~Yamamoto.
\newblock An inverse problem in periodic diffractive optics: reconstruction of
  {L}ipschitz grating profiles.
\newblock {\em Appl. Anal.}, 81(6): 1307--1328, 2002.


\bibitem{FJ16} S. Fliss and P. Joly.
\newblock Solutions of the time-harmonic wave equation in periodic waveguides:
Asymptotic behavior and radiation condition.
\newblock {\em Arch. Ration. Mech. Anal.}, 219: 349-386, 2016.

\bibitem{HF97}
F.~Hettlich and A.~Kirsch.
\newblock Schiffer's theorem in inverse scattering theory for periodic structures.
\newblock {\em Inverse Problems}, 13(2):351--361, 1997.


\bibitem{JLZ19b} X. Ji, X. Liu and B. Zhang.
\newblock Inverse acoustic scattering with phaseless far field data--Uniqueness,
phase retrieval, and direct sampling methods.
\newblock {\em SIAM J. Imaging Sci.}, 12(2):1163--1189, 2019.

\bibitem{Kato}
T.~Kato.
\newblock {\em Perturbation Theory for Linear Operators}.
Springer, New York, 1966.

\bibitem{Kirsch1}
A.~Kirsch.
\newblock Diffraction by periodic structures.
\newblock In: {\em Inverse Problems in Mathematical Physics ({S}aariselk\"{a},
  1992)}, volume 422 of {\em Lecture Notes in Phys.}, pages 87--102. Springer,
  Berlin, 1993.

\bibitem{K94}
A.~Kirsch.
\newblock Uniqueness theorems in inverse scattering theory for periodic structures.
\newblock {\em Inverse Problems}, 10(1):145--152, 1994.


\bibitem{K14} M.V. Klibanov.
\newblock Phaseless inverse scattering problems in three dimensions.
\newblock {\em SIAM J. Appl. Math.}, 74: 392--410, 2014.

\bibitem{K17}M.V. Klibanov.
\newblock A phaseless inverse scattering problem for the 3-{D} {H}elmholtz equation.
\newblock {\em Inverse Probl. Imaging}, 11(2):263--276, 2017.

\bibitem{K17x}M.V. Klibanov and V.G. Romanov.
\newblock Uniqueness of a 3-{D} coefficient inverse scattering problem without the phase information.
\newblock {\em Inverse Problems}, 33(9):095007, 2017.

\bibitem{K18}M.V. Klibanov, N.A. Koshev, D.-L. Nguyen, L.H. Nguyen, A. Brettin and V.N. Astratov.
\newblock A numerical method to solve a phaseless coefficient inverse problem from a single
measurement of experimental data.
\newblock {\em SIAM J. Imaging Sci.}, 11(4): 2339--2367, 2018.

\bibitem{Kuchment}
P. Kuchment.
\newblock {\em Floquet Theory for Partial Differential Equations}.
\newblock Birkh\"{a}user, Basel, 1993.

\bibitem{Monk}
P.~Monk.
\newblock {\em Finite Element Methods for {M}axwell's Equations}.
Oxford University Press, New York, 2003.


\bibitem{N15}
R.~G. Novikov.
\newblock Formulas for phase recovering from phaseless scattering data at fixed frequency.
\newblock {\em Bull. Sci. Math.}, 139(8):923--936, 2015.

\bibitem{RS1978}
M.~Reed and B.~Simon.
\newblock {\em Methods of Modern Mathematical Physics. IV. Analysis of Operators}.
Academic Press, New York, 1978.

\bibitem{SW2002}
A.V. Sobolev and J.~Walthoe.
\newblock Absolute continuity in periodic waveguides.
\newblock {\em Proc. London Math. Soc. (3)}, 85(3): 717--741, 2002.

\bibitem{B1}
B.~Strycharz.
\newblock An acoustic scattering problem for periodic, inhomogeneous media.
\newblock {\em Math. Methods Appl. Sci.}, 21(10):969--983, 1998.

\bibitem{XZZ} X. Xu, B. Zhang and H. Zhang.
\newblock Uniqueness and direct imaging method for inverse scattering by locally rough
surfaces with phaseless near-field data.
\newblock {\em SIAM J. Imaging Sci.}, 12(1):119--152, 2019.

\bibitem{XZZ18a} X. Xu, B. Zhang and H. Zhang.
\newblock Uniqueness in inverse scattering problems with phaseless far-field data at a fixed frequency.
\newblock {\em SIAM J. Appl. Math.}, 78 (3):1737--1753, 2018.




\bibitem{ZZ17}B. Zhang and H. Zhang.
\newblock Recovering scattering obstacles by multi-frequency phaseless far-field data.
\newblock {\em J. Comput. Phys.}, 345:58--73, 2017.

\bibitem{ZZ18}B. Zhang and H. Zhang.
\newblock Fast imaging of scattering obstacles from phaseless far-field measurements at a fixed frequency.
\newblock {\em Inverse Problems}, 34(10):104005, 2018.


\end{thebibliography}
\end{document}